\renewcommand\subsubsection{\@secnumfont}{\bfseries}%
\renewcommand\subsubsection{\@startsection{subsubsection}{3}
  \z@{.5\linespacing\@plus.7\linespacing}{-.5em}%
  {\normalfont\bfseries}}
\newcommand{\psmb}{\left( \begin{smallmatrix}}
\newcommand{\psme}{ \end{smallmatrix} \right) }
\numberwithin{equation}{section}
\theoremstyle{plain}
\newtheorem{thm}{Theorem}[section]
\newtheorem{lem}[thm]{Lemma}
\newtheorem{prop}[thm]{Proposition}
\newtheorem{cor}[thm]{Corollary}
\newcommand{\thmref}[1]{Theorem~\ref{#1}}
\newcommand{\lemref}[1]{Lemma~\ref{#1}}
\newcommand{\propref}[1]{Proposition~\ref{#1}}
\newcommand{\corref}[1]{Corollary~\ref{#1}}
\newcommand{\rmkref}[1]{Remark~\ref{#1}}
\theoremstyle{definition}
\newtheorem{rmk}[thm]{Remark}
\newtheorem{conj}[thm]{Conjecture}
\newtheorem{hyp}[thm]{Hypothesis}
\newcommand{\hypref}[1]{Hypothesis~\ref{#1}}
\newcommand*{\Q}{\mathbf{Q}}
\newcommand{\z}{\mf Z}
\newcommand{\h}{\mf H}
\newcommand{\hn}{\mf H_n}
\newcommand{\re}{\mrm{Re}}
\newcommand{\im}{\mrm{Im}}
\newcommand{\mbb}{\mathbb}
\newcommand{\mf}{\mathbf}
\newcommand{\q}{\quad}
\newcommand{\n}{\nonumber}
\newcommand{\mc}{\mathcal}
\newcommand{\mrm}{\mathrm}
\newcommand{\R}{\mf{R}}
\newcommand{\sltwo}{\mrm{SL}_2(\mf Z)}
\newcommand{\spn}{\mrm{Sp}_n(\mf Z)}
\newcommand{\GL}{\mrm{GL}}
\newcommand{\mcr}{\mathscr}
\newcommand{\midmid}{\;\middle|\;}
\newcommand{\symn}{\mrm{Sym}_n(\mf Z)}
\newcommand{\sptwo}{\mrm{Sp}_2( \mf Z)}
\newcommand{\sltwor}{\mrm{SL}_2(\mf R)}
\newcommand{\spnr}{\mrm{Sp}_n( \mf R)}
\newcommand{\glnz}{\mrm{GL}_n( \mf Z)}
\newcommand{\symnr}{\mrm{Sym}_n(\mf R)}
\newcommand{\gp}{\Gamma^{(2)}_0(p)}
\newcommand{\fngam}{\mc F^{(n)}_\Gamma}
\newcommand{\fn}{\mc F^{(n)}_1}
\newcommand{\pfp}{\mf F_p}
\newcommand{\bkngam}{\mathbb B_{k,\Gamma}(Z)}
\newcommand{\skngam}{S^{(n)}_k(\Gamma)}
\newcommand{\skngamconj}{S^{(n)}_k(g^{-1} \Gamma g)}
\newcommand{\skgam}{S^n_k(\Gamma)}
\newcommand{\sab}{\mcr S_\Gamma}
\newcommand{\tr}{\mathrm{tr}\,}
\newcommand{\lan}{\langle }
\newcommand{\ran}{\rangle}
\newcommand{\sumn}{\sum \nolimits}
\newcommand*{\QEDB}{\hfill\ensuremath{\square}}
\newcommand{\smat}[4]{\left( \begin{smallmatrix}#1&#2\\#3&#4\end{smallmatrix}\right)}
\newcommand{\pmat}[4]{\begin{pmatrix} #1 & #2 \\ #3 & #4 \\ \end{pmatrix} }
\newcommand*{\norm}[1]{\left\lVert#1\right\rVert}
\newcommand{\pet}[1]{\lan #1, #1 \ran }
\keywords{Sup-norm, Siegel cusp forms, Poincar\'e series}
\author{Soumya Das}
\address{Department of Mathematics\\ 
Indian Institute of Science\\ 
Bengaluru -- 560012, India.}
\email{soumya@iisc.ac.in}
\date{}
\subjclass[2010]{Primary 11F46, 11F11, Secondary  11F30} 
\begin{document}

\title[Siegel cusp forms and their sizes]{$L^\infty$-sizes of the spaces Siegel cusp forms of degree $n$ via Poincar\'e series}

\begin{abstract}
    We prove the conjectures on the ($L^\infty$)-sizes of the spaces of Siegel cusp forms of degree $n$, weight $k$, for any congruence subgroup in the weight aspect as well as for all principal congruence subgroups in the level aspect, in particular. This size is measured by the size of the Bergman kernel of the space. More precisely we show that the  aforementioned size is $\asymp_{n} k^{3n(n+1)/4}$. Our method uses the Fourier expansion of the Bergman kernel, and has wide applicability. We illustrate this by a simple algorithm.
    We also include some of the applications of our method, including individual sup-norms for small weights and the non-vanishing of Poincar\'e series of exponential type in higher degrees.
\end{abstract}

\maketitle

\section{Introduction}
The sup-norm problem for automorphic forms has seen a lot of activity in the recent years. Obtaining estimates of eigenfunctions of the Laplace-Beltrami operator in terms of the Laplace eigenvalues on a Riemannian manifold has a long history, germinating from classical analysis. We mention the case of $X$ being compact and locally symmetric space, for which  Sarnak proved \cite{sarnak1} the generic bound:
\begin{align} \label{sar-bd}
\|\phi\|_\infty = \sup_{x \in X} |\phi(x)| \ll \lambda_\phi^{\tfrac{\dim X - r}{4}}
\end{align}
where $r$ denotes the rank of $X$. Similar results are also known in the non-compact setting when one restricts the domain to compact subsets (cf. \cite{brumley2020large}).
For number theorists, the main concern is to solve the problem for arithmetic quotients i.e. for symmetric spaces of the form $\Gamma \backslash G$ where $G$ is a Lie group and $\Gamma$ rises from arithmetic input, e.g. $\Gamma$ may be a congruence subgroup of some kind. In this arithmetic setting, due to the availability of a nice family of Hecke operators, one can exploit non-trivial relations among them to implement an `amplification method' to improve upon the bounds for an eigenform that one obtains generally over arbitrary manifolds via classical analysis. The first example of this approach, when $X=\Gamma \backslash \h$ with $\Gamma \subseteq \sltwor$ being a co-compact subgroup arising from the unit group of a quaternion algebra over $\Q$, was shown by Iwaniec and Sarnak \cite{iwaniec1995supnorms}. Their result improves upon \eqref{sar-bd}, and shows that
\[  \|\phi\|_\infty \ll_\epsilon \lambda_\phi^{5/24+\epsilon} \] 
for any $\epsilon>0$. Note that here $\dim X=2, r=1$ and so the generic bound had exponent $1/4$.

In a similar theme as mentioned above, this paper will focus on (scalar-valued weight $k$) holomorphic modular forms on the Siegel upper half space of degree $n \ge 1$ which are automorphic under $\spn$, see \cite{klingen1}, and also Section~\ref{prelim} fo precise definitions. These can be embedded into the space of ``weight-$k$'' Siegel-Maa{\ss}forms  with Laplace eigenvalue $\displaystyle \frac{nk}{4}(n-k+1)$ under the weight $k$ Laplace operator, see \cite{kramer-mandal2}  for more details. The corresponding sup-norm problem, as discussed above, thus becomes one on the non-compact quotient $\spn \backslash \h_n$. 
We will briefly discuss some of the known results of the problem in the ``weight-aspect'' in this setting. For the level aspect, or Laplace eigenvalue aspects, we refer the reader to the works \cite{saha2017sup}, \cite{steiner}.

For the case $n=1$, the best possible bound for the sup-norm of a cuspidal, $L^2$-normalised Hecke eigenform $f$ on $\sltwo$ is due to H. Xia \cite{Xia}. put $\displaystyle \| f\|_\infty:= \sup_{z=x+iy \in \sltwo \backslash \h} y^{k/2} |f(z)| $. Then it is shown in \cite{Xia} that:  $k^{1/4-\epsilon} \ll_\epsilon \| f\|_\infty \ll_\epsilon k^{1/4+\epsilon}$ for any $\epsilon>0$. 
This has now been established for congruence subgroups in \cite{steiner} by  using suitable theta kernels.
While  \cite{steiner2015supnorm} shows that for $f$ as described above, but of half integer weight $k\ge5/2$ contained in the Kohnen plus space of level $4$, one has the bound 
$\| f\|_\infty \ll_{\epsilon} k^{3/7+\epsilon}$.

One can also consider the average version of the above problem -- i.e., consider the sup-norm problem over an orthonormal basis of the space in question,-- and here we might expect better and accurate results. Apart from its own interest, such average results are crucial in implementing the ``amplification-method'' to extract a non-trivial bound for a single $f$.
As we will see below, this average version is tractable in the case of elliptic modular forms, but ceases to become so for $n>1$.

\subsection{The sup-norm problem for holomorphic cusp forms in higher degrees}
Whereas there are plenty of results for the arithmetic quotients of $\sltwo$, very few such are available for the higher dimensional spaces. 
We now focus our attention to the particular case of our interest: the Siegel modular group $\spn$ and its subgroups.  For a cofinite group $\Gamma \subset \spn$, there are two distinct sup-norm problems to consider, each of which is in a nascent stage. 

\subsubsection{The $L^2$-size of the space via the Bergman kernel}
Let $\skngam$ denote the space of holomorphic Siegel cusp form on $\Gamma \subset \spn$, which is co-finite, i.e. the volume of $\Gamma \backslash \h_n$ is finite. In this case we want
 to obtain the \textit{correct} $L^\infty$ size (i.e., genuine asymptotics, instead of bounds) of the space $\skngam$ as $k \to \infty$ as defined below.
Let $B_{k}(\Gamma)$ denote an orthonormal basis for the space $\skngam$, and put (our Petersson norms are not volume normalized, see \eqref{pet-def})
\begin{align}
    \bkngam := \sumn_{F \in B_{k}(\Gamma)} \det(Y)^k |F(Z)|^2, \ (Z \in \hn); \q  \sup(\skngam) : = \sup\nolimits_{Z \in \hn} \bkngam .
\end{align}
Its easy to check that the quantity $\det(Y)^k |F(Z)|^2$ is invariant under $\Gamma$, that $\bkngam$ is independent of the choice of the orthonormal basis, and thus $\sup(\skngam)$ depends only on $n,k$ and $\Gamma$. We wish to study its size in terms of these parameters -- and call $\sup(\skngam)$ the ``$L^2$--size'' or simply the ``size'' of the space $\skngam$. The goal of the paper is to prove optimal bounds for this ``size'' of $\skngam$.

We expect (see also \cite{das-krishna}) that, 
\begin{conj} \label{Wt-conj}
    As $k \to \infty$,
\begin{align} \label{wt-conj}
    \sup\nolimits_{Z \in \mf \hn} \bkngam \asymp_{n,\Gamma} k^{3n(n+1)/4}.
\end{align}
\end{conj}

We now assume that $\Gamma$ is a congruence subgroup of level $N$. In the level aspect, we expect that,
\begin{conj} \label{Lev-conj}
    With $k$ fixed,  as the level $N$ of $\Gamma \to \infty$,
    \begin{align} \label{lev-conj}
    \sup\nolimits_{Z \in \mf \hn} \bkngam \asymp_{n,k} 1.
\end{align}
\end{conj}

It is of course nice if one can obtain a hybrid (i.e. uniform in various of these aspects (weight, degree, level)) result (cf. \cite{kr1}). The genesis of the above conjectures for us are the corresponding lower bounds, which can be obtained without much difficulty, see Section~\ref{lbd-sec} and the known result for the $n=1$ case from \cite{Xia}, \cite{friedman2019effective}. In the ``compact'' setting (i.e., either in the co-compact case or the restriction  to a compact domain) one can essentially work out the expected ``sizes'' by computing the contribution of the term corresponding to the identity in the geometric side of the Bergman kernel (cf. the work of Cogdell-Luo \cite{cog-luo}). This is reminiscent of the computation of the dimension of spaces from trace formulae, see e.g., the remarkable work of \cite{wakatsuki}.
In the non-compact setting, this is not the case, but perhaps the ``parabolic'' contribution gives the answer. It does not seem that these conjectures (which concern arithmetic subgroups of $\spn$) are stated anywhere in the literature pertaining to the complex geometry of the Siegel modular variety. 
But we want to emphasize that in the average situation one would really expect the exact size in terms of the parameters and not merely lower or upper bounds. Perhaps an asymptotic formula holds with appropriate error terms in the above conjectures.

\subsubsection{Sup-norm bounds for an eigenform} 
To obtain bounds going beyond what is implied by the above inequalities for a single newform $F$, and ultimately reaching the limit (inspired by average lower bounds or dimension considerations) we conjecture that in the weight aspect (cf. \cite{blo} for level one Ikeda lifts):
\begin{conj} \label{ind-wt-conj}
 If $F$ is an eigenfunction of all Hecke operators,  as $k \to \infty$,
    \begin{align} \label{ind-wt}
    \norm{F}_\infty := \sup\nolimits_{Z \in \mf \hn} \det(Y)^{k/2}|F(Z)| \ll k^{\frac{n(n+1)}{8}}.
\end{align}
\end{conj}

Similarly in the $N$ aspect (here $\Gamma=\Gamma_0^{(n)}(N)$) we conjecture  that,
\begin{conj} \label{ind-lev-conj}
  With $F, \Gamma$ as above,  as $N \to \infty$,
\begin{align} \label{ind-lev}
    \norm{F}_\infty \ll_{k} N^{-\frac{n(n+1)}{4} }.
\end{align}
\end{conj}
Some comments are  in order on how the above conjectures fit together. We only illustrate this for level $1$. Namely, squaring and adding \eqref{ind-lev} over an orthonormal Hecke-basis, one gets 
\[ \sup\nolimits_{Z \in \mf \hn} \bkngam \ll k^{\frac{n(n+1)}{4}} \cdot \dim S^n_k \ll k^{\frac{n(n+1)}{4}} \cdot k^{\frac{n(n+1)}{2}}=k^{3n(n+1)/4}, \]
since $\dim S^n_k = O(k^{\frac{n(n+1)}{2}})$ with the implied constant depending only on $n$. See \cite[Chapter~2, Theorem~4.5]{andrianov2} for level $1$ and \cite{wakatsuki}  for an asymptotic formula in the case of principal congruence subgroups. Essentially \cite{wakatsuki} says that the dimension is proportional to the index of the discrete subgroup -- this is what we have used in the case of $\Gamma_0^{(n)}(N)$ as well -- using the index formula from \cite[(1)]{klingen2}. With this in mind, same argument as above applies to the level case as well.

\subsection{Some known results in degree \texorpdfstring{$n \ge 2$}{n} }
Among the few prior results on this topic, one had the papers \cite{das-krishna}, where the $n=2$ weight aspect case, i.e., Conjecture~\ref{wt-conj} was proved, along with reasonable bounds for higher degrees. The idea was to use the Fourier expansion ``high in the cusp'', and the geometric side of the Bergman kernel near the boundary of the fundamental domain. However, we note here that in order to handle the Fourier expansion, one needs ``uniform in $k$'' bounds for the Fourier coefficients $a_F(T)$ of a cusp form $F$. Nothing much is known about this when $n>2$, and thus the idea in \cite{das-krishna} was to relate the question to bounding the $L^2$-norm of these Fourier coefficients over an orthonormal basis, which leads to Fourier coefficients of Siegel Poincar\'e series.
However these objects are only tractable when $n=2$, which helped in settling this case. When $n>3$ certain manoeuvres were required, essentially borrowing the bounds from the geometric side of the Bergman kernel even to handle the Fourier expansion, leading to a loss in the sharpness of the results.

Hybrid {\it upper bounds} corresponding to Conjectures~\ref{ind-wt-conj},~\ref{ind-lev-conj} above have recently been announced in the pre-print \cite{kr1} using heat-kernel techniques. Here the idea is to embed the Siegel cusp forms isometrically into the space of Siegel-Maa{\ss}forms of weight $k$, and realizing the Bergman kernel as a limit of the corresponding heat-kernel. Bounds for the heat-kernel then provides bounds for the Bergman kernel.
\footnote{The author however does not understand the bound in eqn.~(4.15) and the third display on p.~52 in \cite{kr1}, where the constant $c_1$ is claimed to be independent of $k,\ell$ or $f$ and has been subsequently used at many further instances, whereas from the quoted reference \cite[p.~57, \S~5]{klingen1}, it is rather clear that this is not the case. Secondly, the authors invoke Theorem~4.3 loc. cit., which is for compact quotients $M=\Gamma \backslash \hn$, with the implied constant depending on $M$ -- to the compact regions seemingly depending on the weight $k$. This seems to interfere with the bounds.} 

Prior to these, one also had the papers \cite{blo} and \cite{blo-pohl}, where the cases of Ikeda lifts (under GLH) and restriction to compacta when $n=2$ (for Siegel-Maa{\ss} forms) were considered respectively.

The aim of this paper is to provide proofs (and an algorithm to implement our idea more generally: see Section~\ref{algo-gam-lat}), of sufficient interest to analytic number theorists, for the above mentioned conjectures on average  using arguments from the perspective of analytic  theory of automorphic forms, viz., essentially using the Fourier expansion of the Bergman kernel of the space. In particular our arguments use neither the heat kernel method (cf. \cite{kr1}), nor the complicated analysis of the Bergman kernel (cf. \cite{das-krishna}, case $n=2$). In this sense they are much simpler. Moreover, it is essentially based on the philosophy already laid down for instance in \cite{das-krishna} -- that the relevant Poincar\'e series or its Fourier coefficients should play a decisive role in controlling the size of the Bergman kernel. In that sense, the present paper may be thought of as a sequel to \cite{das-krishna}.

\subsection{Main results}
We now state the main results of the article. In line with the Conjecture~\ref{ind-wt} above, this essentially confirms \eqref{ind-wt} (up to $\epsilon$).
\begin{thm}[Weight aspect] \label{mainthm-wt}
    Let $k \ge 2n+2$ and $\Gamma  \subset \spn $ be any congruence subgroup of level $N$, for any $n \ge 1$. Let $\epsilon>0$. Then one has 
    \begin{equation}
     \mf c_\Gamma  \, k^{\frac{3n(n+1)}{4}} \ll_n  \sup(\skngam)    \ll_{n, \epsilon} k^{\frac{3n(n+1)}{4}} \, N^{\frac{n^2}{2}+\epsilon}, 
     \end{equation}
where $\mf c_\Gamma$ is a constant depending only on $\Gamma$, and moreover $\mf c_\Gamma \gg_n 1$ for most familiar congruence subgroups -- e.g. for $\Gamma=\Gamma^{(n)}_0(N), \Gamma^{(n)}_1(N), \Gamma^{(n)}(N), \Gamma^{(n),0}(N)$ or for any $\Gamma$ whose ``maximal width at the cusp $\infty$'' is $1$; or
when $N \asymp_n 1$. 
\end{thm}
For the definition of the ``maximal width at the cusp $\infty$'', see \eqref{nij-omega}.

We note that \thmref{mainthm-wt} should hold for any `arithmetic' subgroup of $\spnr$, see Remark~\ref{k-arith}. We however stick to congruence subgroups, as they are of primary interest, and no further new ideas are necessary to handle the arithmetic groups. The upper bound in \thmref{mainthm-wt} is proven in Section~\ref{upp-wt-sec}, and the lower bounds in Section~\ref{lbds-all}.

Coming to the condition on the weight in \thmref{mainthm-wt}, note that in the classical theory of Siegel modular forms, see e.g. \cite{klingen1, freitag}, one usually encounters `large' weights (commonly $k \ge 2n$) to ensure absolute convergence of various objects -- be it Eisenstein/Poincar\'e series or for the validity of higher dimensional sums and integrals. Of course one can adopt the so-called `Hecke's trick' to treat the smaller weights in some cases; and this in turn would perhaps improve the conditions on weights in the results of this paper. In the later parts of this paper we include a different approach on the `small weights', see Subsection~\ref{sm-wt}.
Also, for using comparison of `sum versus integral' (cf. \cite{braun}) in higher dimensions one has to be careful in this respect.

Next, in \thmref{mainthm-prin} given below, we prove a hybrid version of the Conjectures~(i) above, for all the principal congruence subgroups.
The reader may notice that while \thmref{mainthm-wt} is weaker in the level aspect than that of \thmref{mainthm-prin}, it covers all familiar congruence subgroups, and is readily applicable to a wide variety of situations.
Indeed, the proof of \thmref{mainthm-wt} is simpler than that of \thmref{mainthm-prin}, uses a `soft' approach mentioned above, and probably will find use elsewhere. In addition, some of the key calculations in the proof of \thmref{mainthm-wt} are crucially used in Section~\ref{poin-sec}.

\begin{thm}[Hybrid aspect] \label{mainthm-prin}
Let $\Gamma^{(n)}(N) \subset \spn$ be the principal congruence subgroup of level $N$. Then for all $k>2n+2$, one has the uniform bound
\begin{equation}
 \sup(\skngam)  \asymp_n k^{\frac{3n(n+1)}{4}} .
\end{equation}
\end{thm}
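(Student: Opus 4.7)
The proof splits into lower and upper bounds. The lower bound is immediate: since $\Gamma = \Gamma^{(n)}(N)$ appears on the list of groups in \thmref{mainthm-wt} for which $\mf c_\Gamma \gg_n 1$, that theorem already gives $\sup(\skngam) \gg_n k^{3n(n+1)/4}$. The novelty is the matching $N$-uniform upper bound, for which the decisive observation is that $\Gamma^{(n)}(N)$ is \emph{normal} in $\spn$.

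Exploiting normality, for any $g \in \spn$ and any orthonormal basis $\{F_i\}$ of $\skngam$, the slashed family $\{F_i|_k g\}$ again lies in $S^{(n)}_k(g^{-1}\Gamma g) = \skngam$ and remains orthonormal (the slash action is Petersson-unitary), hence differs from $\{F_i\}$ by a unitary change of basis. Combining this with $\det(Y(gZ))^k |\det(CZ+D)|^{2k} = \det(Y)^k$, one obtains
\[
\det(Y(gZ))^k \sum_i |F_i(gZ)|^2 \;=\; \det(Y)^k \sum_i |F_i(Z)|^2 \qquad (g \in \spn,\; Z \in \hn),
\]
so $\bkngam$ is in fact $\spn$-invariant (not merely $\Gamma$-invariant). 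In particular $\sup\nolimits_{Z \in \hn} \bkngam = \sup\nolimits_{Z \in \fn} \bkngam$, and it suffices to bound $\bkngam$ on the level-one fundamental domain.

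On $\fn$ I would insert the closed-form expression (cf.~\cite{klingen1}) of the Bergman kernel of $\skngam$,
\[
\bkngam \;=\; c_{k,n}\, \det(Y)^k \sum_{\gamma \in \Gamma} \det\!\left(\tfrac{Z - \overline{\gamma Z}}{2i}\right)^{-k} \overline{\det(CZ+D)}^{-k},
\]
where the normalizing constant $c_{k,n}$ depends only on $k$ and $n$ (not on $\Gamma$) and satisfies $c_{k,n} \asymp_n k^{n(n+1)/2}$. Taking absolute values termwise and enlarging the index set from $\Gamma$ to $\spn$ (permissible since $\Gamma \subseteq \spn$) produces the $N$-independent majorant
\[
\bkngam \;\le\; c_{k,n}\, \det(Y)^k \sum_{\gamma \in \spn} \left|\det\!\left(\tfrac{Z - \overline{\gamma Z}}{2i}\right)\right|^{-k} \bigl|\det(CZ+D)\bigr|^{-k}.
\]
Bounding this right-hand side by $\ll_n k^{3n(n+1)/4}$ on $\fn$ is precisely the level-one content underlying \thmref{mainthm-wt}: decompose the $\spn$-sum by bottom-row pairs $(C,D)$, dominate each coset orbit by a uniform Lipschitz-type series, and invoke the unimodular-matrix estimates imported from \cite{das-krishna}. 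Combined with the normality reduction, this yields the claimed $N$-uniform upper bound.

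The main technical subtlety I anticipate is that taking absolute values before majorizing tightens the convergence threshold of the resulting $\spn$-sum: the stated weight condition $k > n^2/2 + n$ should turn out to be exactly the range in which the absolute-value sum above is controlled on $\fn$ with the optimal exponent $3n(n+1)/4$, and checking this compatibility (beyond the milder $k \ge 2n+2$ assumption used in \thmref{mainthm-wt}) is where the main analytic work concentrates.
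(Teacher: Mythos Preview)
Your normality observation is correct and is exactly what the paper exploits when it notes that for $\Gamma^{(n)}(N)$ ``there is only one conjugate.'' So the reduction to bounding $\bkngam$ on $\fn$ is fine. The gap is in the next step.

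After majorizing termwise and enlarging to $\spn$, you are no longer bounding the level-one Bergman kernel but its \emph{absolute majorant}
\[
c_{k,n}\det(Y)^k\sum_{\gamma\in\spn}\Bigl|\det\Bigl(\tfrac{Z-\overline{\gamma Z}}{2i}\Bigr)\Bigr|^{-k}|J(\gamma,Z)|^{-k},
\]
and this quantity is \emph{not} uniformly bounded on $\fn$. Already the translation block $\gamma=\psmb 1&S\\0&1\psme$ contributes
\[
\det(Y)^k\sum_{S\in\symn}|\det(Y-iS/2)|^{-k}=\sum_{S}\bigl|\det(1_n-iY^{-1/2}SY^{-1/2}/2)\bigr|^{-k},
\]
and for $Y=y\,1_n$ the number of $S$ with all eigenvalues $O(y)$ is $\asymp y^{n(n+1)/2}$, each giving a summand $\gg_k 1$; so the majorant grows like $\det(Y)^{(n+1)/2}$. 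This is precisely the $\det(Y)$-loss recorded in \propref{pol-bd-f1}. The level-one proof in \thmref{mainthm-wt} avoids this by first passing to the Fourier side \eqref{bk-fe} and only then taking absolute values, which is a genuinely different order of operations: the signed Lipschitz sum $\sum_S\det(iY_0+S)^{-k}$ obeys \eqref{lip-bd}, whereas $\sum_S|\det(iY_0+S)|^{-k}$ does not.

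What the paper actually does after the normality reduction is stay with $\Gamma=\Gamma^{(n)}(N)$ and bound its own Poincar\'e series $P_{T;\Gamma}(Z)\ll_n 1$ on $\fn$ (\propref{poin-bd}), using two structural facts specific to $\Gamma^{(n)}(N)$ that your enlargement discards: every $g\in\Gamma$ has $c_g\equiv 0\bmod N$, so when $c_g\neq 0$ one can extract $|J(g,Z)|\gg N$ and make those terms $O(N^{-A})$ (\lemref{cgnot0-lem}); and $\mcr U_\Gamma$ consists of matrices $\equiv 1_n\bmod N$, which controls the $c_g=0$ contribution $H_\Gamma(T,Y/\omega)\ll 1$ (\lemref{cg=0-lem}). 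Only after $P_{T;\Gamma}\ll 1$ is established does one feed this into \eqref{bk-pt} and invoke the lattice-sum estimate of subsection~\ref{step3}. The weight threshold $k>n^2/2+n$ arises from balancing the parameters $A,D$ in \lemref{cgnot0-lem}, not from convergence of the absolute $\spn$-sum.
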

We prove a slightly stronger result, see \thmref{mainthm-rest}, from which \thmref{mainthm-prin} follows immediately. The advantage with principal congruence subgroups is that there is only one conjugate (of itself) and the congruence $c_g \equiv 0 \bmod N$ holds for all elements of the group. This can be leveraged suitably to kill the level dependence in \thmref{mainthm-wt}. Probably a more conceptual argument, still within the current framework, can be found. Our calculations simplify if we assume that the group of translations look like $\omega \cdot \symn$ at some cusp. This may not hold for all congruence subgroups when $n \ge 2$, see \rmkref{cusp-config}.

There are two main ideas behind the success of this method over that in \cite{das-krishna}. As discussed above, the first one is the use of Poincar\'e series for all the conjugate subgroups of $\Gamma$ -- this allows one to work only in the convenient level one fundamental domain $\fn$ and completely dispenses with the need to work with the  Petersson trace formula, something which is rather hard to handle anyway in higher degrees, see the introduction of \cite{das-krishna}.
 The second one roughly stems from the fact that Poincar\'e series $P_{G_1}$ for a group $G_1$ is in some sense `smaller' than one for a bigger group $G_2 \supseteq G_1$ as the number of summands is higher for $G_2$. This feature is utilized actually for majorants of these objects, thereby reducing the bulk of the calculations to $\spn$ itself. 

When the weight is  large enough, of the order of the level at a $\log$ scale (i.e., $\log k \gg \log N$), we can prove the desired bound for all congruence subgroups.
\begin{thm}[Hybrid aspect] \label{mainthm-klarge}
Let $\Gamma \subset \spn$ be any congruence subgroup of level $N$. Let $\epsilon>0$. Then for all $k > \max\{(n+1)^2, N^{3+\epsilon} \}$, one has the uniform bound
\begin{equation}
  \mf c_\Gamma \, k^{\frac{3n(n+1)}{4}} \ll_n  \sup(\skngam)  \ll_n k^{\frac{3n(n+1)}{4}} .
\end{equation}
\end{thm}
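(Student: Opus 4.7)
The lower bound in Theorem~\ref{mainthm-klarge} is inherited directly from Theorem~\ref{mainthm-wt}: under $k\ge N^{3+\epsilon}$ the hypothesis $k\ge 2n+2$ there is easily satisfied, and the lower bound $k^{3n(n+1)/4}\,\mf c_\Gamma$ holds for any cofinite $\Gamma\subset\spn$. So only the upper bound needs work, and my plan is to revisit the proof of Theorem~\ref{mainthm-wt} and sharpen its $N$-dependence using the stronger weight hypothesis.

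Recall that in Theorem~\ref{mainthm-wt} the factor $N^{n^2/2+\epsilon}$ arises from two sources: summing Poincar\'e-series contributions over the cusps of $\Gamma$, and polynomial growth of certain sums over unimodular matrices (the Lipschitz-type series from \cite{das-krishna}) that enter the pointwise estimates. The observation already exploited in Theorem~\ref{mainthm-prin} for principal congruence subgroups is that the congruence condition on $\Gamma$ provides an additional analytic saving through $|j(\gamma,Z)|^{-2k}$ that has not yet been cashed in: if $\gamma\in\Gamma$ has non-trivial $C$-block then $\|C\|\gg N$, and hence $|j(\gamma,Z)|^{-2k}\ll N^{-ck}$ for a constant $c=c(n)>0$. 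This geometric decay in $k$ swallows polynomial losses in $N$ as soon as $k$ is slightly super-polynomial in $N$.

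Thus the plan is: (a) expand $\bkngam$ exactly as in the proof of Theorem~\ref{mainthm-wt}, obtaining at each cusp of $\Gamma$ a Poincar\'e-series-type contribution and, by the pointwise majorization permitted by $\Gamma\subset\spn$, reducing the analysis to estimates in the level-one fundamental domain $\fn$; (b) isolate the identity contribution, which produces the genuine main term of size $\asymp_n k^{3n(n+1)/4}$; (c) estimate the tail by carefully tracking how $N$ enters --- polynomially, through the number of cusps and the count of $\Gamma$-lattice points of bounded norm, and geometrically in the opposite direction through the factor $|j(\gamma,Z)|^{-2k}\ll N^{-ck}$. The hypothesis $k\ge N^{3+\epsilon}$ is precisely the threshold at which the geometric decay overwhelms the polynomial growth and the whole tail is $O_n(k^{3n(n+1)/4})$.

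The main obstacle is step (c): the careful book-keeping of the $N$-dependence through the Poincar\'e series machinery, using the uniform bounds on the Lipschitz series and on the sums over unimodular matrices from \cite{das-krishna} as well as the sum-versus-integral comparisons in the spirit of \cite{braun}, uniformly across all cusps and all conjugates of $\Gamma$. In particular one must verify that the final polynomial $N$-loss is at most $N^{(1/3-\epsilon')k}$, which is what dictates the exponent $3$ in the hypothesis $k\ge N^{3+\epsilon}$. Note that the easier alternative via the subspace comparison $B_{k,\Gamma}\le [\Gamma:\Gamma^{(n)}(N)]\,B_{k,\Gamma^{(n)}(N)}$ combined with Theorem~\ref{mainthm-prin} fails, since the index is as large as $N^{n(2n+1)}$ and is not controlled by the hypothesis on $k$; so the Poincar\'e series approach is genuinely needed.
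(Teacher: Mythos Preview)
Your plan has a genuine gap. The central claim in step~(c) --- that for any $\gamma\in\Gamma$ with non-trivial $C$-block one has $\|C\|\gg N$, so that $|j(\gamma,Z)|^{-2k}\ll N^{-ck}$ --- is simply false for an arbitrary congruence subgroup of level $N$. It holds when $\Gamma\subset\Gamma^{(n)}_0(N)$, but recall from \S\ref{k-cof} that after reducing to $Z\in\fn$ you must bound $\mbb B_{k,G}(Z)$ for \emph{every} conjugate $G=g^{-1}\Gamma g$, and for a general $\Gamma$ these conjugates need not lie inside any $\Gamma^{(n)}_0(\omega_G)$ (this is exactly hypothesis~(i) of Theorem~\ref{mainthm-rest}, which is \emph{not} assumed here; cf.\ also Remark~\ref{cusp-config}). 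For instance already the conjugate $\Gamma^{(n),0}(N)$ of $\Gamma^{(n)}_0(N)$ contains elements with $c_\gamma$ having entries of size $1$. So the mechanism you invoke to produce the decay $N^{-ck}$ is unavailable, and with it your explanation of the exponent $3$ collapses.

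The paper's argument is quite different and does not use the $C$-block congruence at all. It combines two bounds on $\mbb B_{k,G}(Z)$ for $Z\in\fn$, both valid uniformly for every conjugate $G$: the bound \eqref{bkg-1}, namely $\mbb B_{k,G}(Z)\ll k^{3n(n+1)/4}(\omega^n/\det Y)^{n/2+\epsilon}$, and the first bound of Proposition~\ref{pol-bd-f1}, namely $\mbb B_{k,G}(Z)\ll k^{n(n+1)/2+\epsilon}\det(Y)^{3(n+1)/4+\epsilon}$. One then splits $\fn$ at $\det Y=\omega^n$: in the high region the first bound gives $\ll k^{3n(n+1)/4}$ directly; in the low region the second gives $\ll k^{n(n+1)/2+\epsilon}\omega^{3n(n+1)/4+\epsilon}$, and forcing this below $k^{3n(n+1)/4}$ yields $\omega^3\ll k$, hence $k\ge N^{3+\epsilon}$ since $\omega\mid N$. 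That is the true origin of the exponent $3$: it is the ratio of the $\det(Y)$-exponents in the two bounds, not a comparison of polynomial $N$-losses against $N^{-ck}$.
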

Note that the constraint here is of a different kind than the above Theorems: the weight has to be large compared to the level, and not only to the degree.
The proof follows by piecing together various bounds on the Bergman kernel that one obtains while proving \thmref{mainthm-wt} and depends on bounding various sums over symmetric matrices in a bit different manner than that in \thmref{mainthm-wt}. See Subsection~\ref{klarge-sec}.

\subsection{Co-compact case}
When the fundamental domain $\Gamma \backslash \hn$ is compact and $k \ge 2n$, one has the bound
\begin{align} \label{cocmpt}
    \sup(\skgam) \ll_n k^{\frac{n(n+1)}{2}} .
\end{align}
This bound is expected from complex geometry, see \cite{kr1} and the references therein.
This is in spirit worked out in \cite{cog-luo}, even though they only consider compact regions in the co-finite setting. One can however get this bound by observing \cite[eqn.~(8.2)]{das-krishna}. Since \cite[eqn.~(8.3)]{das-krishna} (Godement's theorem) notes that the absolute majorant of the Bergman kernel for $\spn$ converges and gives the bound in \eqref{cocmpt}, all co-compact subgroups $\Gamma$ produce the same bound. Indeed, in \cite[p.~81, paragraph~2, last line]{klingen1} it is noted that Godement's theorem is valid for all discontinuous subgroups of $\spn$. Hence in the rest of the paper, we focus only on the case of congruence subgroups.

\subsection{Remarks about further generalizations and applicability}
\subsubsection{Vector-valued cusp forms}
It is clear that the method of this paper generalizes without much issue to vector-valued Siegel cusp forms. We have not included it solely because it is not clear what is the explicit relationship of some of the constants e.g. $c(\rho), H(\rho)$ (for the spaces $S_\rho(\Gamma)$, cf. \cite{godement}) that arise via the reproducing formula of the Bergman kernel or the Gamma function for $\rho$ with the spectral parameters. We could not find a suitable reference for this. It seems that the Cartan Seminars\footnote{\url{https://www.numdam.org/actas/SHC/}} provide the only accessible references, where Godement comments that when $n=2$, these constants may be calculated explicitly, but probably not in higher degrees. Thus one needs to understand this and formulate a suitable conjecture for $S_\rho(\Gamma)$.
This being granted, all one needs, is to bound the relevant Poincar\'e series (say in level one, vector-valued) in Siegel's fundamental domain as:
\begin{align} \label{crucial}
    P_{T, \Gamma}(Z) \ll_n 1 \q \q (Z \in \fn),
\end{align}
in a suitable norm of the underlying vector space. This is no different than our present situation!
After this, it is rather easy to obtain the desired bound (cf. Subsection~\ref{gen-str}~(3)) -- and this is indeed our overall strategy in this paper. 

\subsubsection{Half-integral weights, Hilbert cusp forms etc.}
However the treatment of half-integral weights is verbatim the same, one could include them at the cost of more standard technicalities and notation. We choose not to do this, and leave it to the interested reader to work this out. In particular, we note that the present arguments do not rely on any kind of Atkin-Lehner-Hecke theory or specific arithmetic decomposition of congruence subgroups (cf. \cite[Appendix]{das-anamby2}) to facilitate bounds in convenient regions.  For arbitrary congruence subgroups such information may not be available at present anyway. However the arithmeticity of the subgroups that we consider is important.
The method generalizes to Hilbert modular forms with some work, and also to automorphic forms on other tube domains in principle. Hermitian or orthogonal modular forms are definitely cases of interest here, and it would be worthwhile to try these cases.

\subsection{Lower bounds} 
The other point that we want to emphasize, is about the lower bounds. We believe that the lower bounds are crucial in the problem on average -- as this not only implies the \textit{correct} size of the relevant space; but also aids in setting up the conjectural upper bounds for various spaces where the dimension formulae are not available. Indeed this was the way in which the conjectures for $\spn$ and its subgroups were obtained in \cite{das-krishna}. Of course in level one, an asymptotic dimension formula is known, see e.g. \cite{wakatsuki}. As far as the author is aware, such formulae is not known for arbitrary congruence subgroups in a usable form.
The respective lower bounds were easy to establish for $\spn$ (cf. \cite[Section~7.1]{das-krishna}) -- the presence of levels create some trouble, since the fundamental domain is more complicated -- and delicate arguments are required. In many interesting cases, such lower bounds can be obtained, see Section~\ref{lbds-all}. These rely on a result about the ``soft'' asymptotic formula for the Fourier coefficients of Siegel Poincar\'e series of higher degrees, see Theorem~\ref{pt-asymp}. Since the explicit formula for such Fourier coefficients is not available for higher degrees, our method in this paper plays a vital role in the proof.

\subsection{Small weights} \label{sm-wt}
We now make some remarks about the so-called small weights, say when $k \approx n$ and the level is one. Here the dimension of $S^n_k$ is just a function of $n$; and also since the implied constants in \thmref{mainthm-wt}, \thmref{mainthm-prin} depend on $n$, there is not much to do in the weight aspect. For other congruence subgroups, only the level aspect is relevant and of course, more subtle. In \thmref{small-wt-thm} we show that for any $L^2$-normalised $G \in \skgam$ for $k \ge \frac{n}{2}$ satisfies the bound:
\begin{equation}
    \norm{G}_\infty \ll_n k^{\frac{3n(n+1)}{8}}
\end{equation}
where we assume the bound $\sup(S^n_\kappa(\Gamma)) \ll_n \kappa^{\frac{3n(n+1)}{4}}$ which the expected  from \eqref{wt-conj} and \eqref{lev-conj}, on \textit{any} congruence subgroup $\Gamma$ for {\it large enough} weights $\kappa$ -- by a norm interpolation technique using H\"older's inequality. The condition on the weight is the best possible, as anything lower would not admit cusp forms. Again this method is quite general.

\subsection{Applications of our method}
As immediate applications of our method, we can shed some light on the rather hard conjecture on the non-vanishing of Siegel Poincar\'e series (of exponential type). We essentially can show that given any set of pairwise $\glnz$ inequivalent matrices $\{T_1, \ldots, T_h \} \subset \Lambda_n$, the corresponding Poincar\'e series $\{ P_{T_1}, \ldots, P_{T_h} \}$ on $\Gamma^{(n)}_0(N)$ are linearly independent provided $k$ is large enough, depending on the traces of these matrices. This generalizes and improves upon the main results of \cite{boech-das} where the same statement was proved for infinite families of weights $k$ in arithmetic progressions modulo suitable primes, for $\Gamma=\Gamma^{(n)}_0(N)$ via congruences between modular forms. See \corref{poin-cor} and \corref{poin-nonvanish}. Our results also generalize (and reprove when $n=2$) some of the results in \cite{zhining} to higher degrees.

We also work out an uniform bound for the Bergman kernel $\bkngam$ in the Siegel's fundamental domain for $\Gamma$, see \corref{pol-bd-fn}.

There are are further applications of our method to the problem of mass equidistribution on average on the Siegel modular variety, as well as to the bounds on certain $L$-series arising from cusp forms -- these will be treated in a forthcoming paper. It is however not clear what our method implies for the various subspaces of \textit{lifts}, e.g. see \cite{das-anamby1}, \cite{das-anamby2} for Saito-Kurokawa lifts. It would be interesting to investigate this further.

\subsection{Appendix: elliptic modular forms}
We end the introduction by mentioning the Appendix in Section~\ref{app}, where we treat the simpler case of elliptic modular forms. When the level $N$ of $\Gamma_0(N)$ is square-free then it is known from \cite{jorgenson2004bounding} that the size of $S_2(\Gamma_0(N))$ is $O(1)$.
When $k>2$, our argument is just a few lines, whereas when $k=2$, we first prove a large sieve inequality for \textit{arbitrary} congruence subgroups and apply it to prove the bound $O(\log N)$ for the size of $S_2(\Gamma)$, where the level of $\Gamma$ is $N$. We could not find this result in the literature. Moreover treating all congruence subgroups is crucial for our approach to work, as  we work with all the conjugates of $\Gamma$. See \thmref{duke-gam}~(A),~(B). It seems that the calculations in \cite{das-anamby2}, adapted for $k=2$, might be able to show the bound $O(1)$ (using the conditional summation of Poincar\'e series), but we could not be successful in this regard. 

We give two methods for proving the large sieve inequality.
The first one is Duke-Iwaniec's method \cite{duke} of covering suitable subsets of $\h$ of finite volume by finitely many copies of the fundamental domain. The second one is due to Duke-Friedlander-Iwaniec \cite{dfi} via the Petersson formula -- here we feel that the situation is rather subtle, since for an arbitrary congruence subgroup one does not have complete control over all the arithmetic functions at hand. For example, we have to assume a certain uniform bound for the Kloosterman sums $S_\Gamma(m,n;c)$, see \hypref{kloo-hypo}. Its likely that a suitable variant of our results also hold when $k=1$. Also one might try to consider the large sieve machinery for higher degrees -- for arbitrary congruence subgroups this seems to be difficult. For $\Gamma^{(n)}_0(N)$, there may be some hope, and this is under consideration by the author.

\subsection*{Acknowledgements}
{\small
It is a pleasure to thank Pramath Anamby for comments, and the anonymous referee for many insightful comments and suggestions that improved the paper.
S.D. thanks IISc. Bangalore, UGC Centre for Advanced Studies, DST India for financial support.
}

\section{Notation and setting} \label{prelim}

\subsection{Siegel modular forms}  \label{smf-def}
For basic facts about Siegel modular forms we refer to \cite{andrianov2}, \cite{freitag} or 
\cite{klingen1}. The symplectic group $\spnr$ defined as
\[ \spnr = \{ g= \smat{A}{B}{C}{D} \in M({2n},\R) \mid M^t \smat{0_n}{-1_n}{1_n}{0_n}M= \smat{0_n}{-1_n}{1_n}{0_n} \} ,\]
acts on Siegel's half-space $\mf H_n$ defined as 
 \[  { \mf H_n := \{ Z= Z^t \in M(n, \mf C) \mid (Z - \overline{Z})/2i \text{ is positive definite} \}, } \] 
in the usual way by $g \langle Z \rangle=(AZ+B)(CZ+D)^{-1}$ and
on functions $F:{\mf H}_n\longrightarrow {\mf C}$ by the {weight $k$} \lq\lq stroke\rq\rq operator:
\[ (F\mid_kg)(Z)= \det(CZ+D)^{-k} F(g \langle Z \rangle) \qquad  (g=\left(
\begin{smallmatrix}
A & B\\ C & D\end{smallmatrix}\right)\in \mrm{Sp}(n, \mf R), k \in \mf Z_{\geq0} ). \] 
Throughout the paper we put $\Gamma_n=\spn=\spnr \cap M(2n, \z)$.
For $g \in \Gamma_n$, we also write $g= \psmb a_g & b_g \\ c_g & d_g \psme$. We write $\displaystyle J(g,Z) := \det(c_g Z+d_g)$ as the automorphy factor. We also put $\h:= \mf H_1$.

Let $\Gamma$ be a congruence subgroup of $\Gamma_n$. For a congruence subgroup $\Gamma$, the level $N$ is defined to be the smallest positive integer such that $\Gamma^{(n)}(N) \subset \Gamma$. Among specific subgroups of $\Gamma_n$, we will mainly consider the following congruence subgroups of $\Gamma_n$.
\begin{align}
    \Gamma_0^{(n)}(N)&:=\{\smat{A}{B}{C}{D}\in \Gamma_n: C\equiv 0\bmod N\};\\
     \Gamma^{(n),0}(N)&:=\{\smat{A}{B}{C}{D}\in \Gamma_n : B\equiv 0\bmod N\};\\
     \Gamma^{(n),0}_0(N)&:=  \Gamma_0^{(n)}(N)  \cap \Gamma^{(n),0}(N) ;\\
\Gamma_1^{(n)}(N)&:= \{\smat{A}{B}{C}{D}\in \Gamma_n: C\equiv 0\bmod N, A\equiv 1_n\bmod N \} \\
\Gamma^{(n)}(N)&:= \{\smat{A}{B}{C}{D}\in \Gamma_n: B,C\equiv 0\bmod N, A,D \equiv 1_n\bmod N  \}.
\end{align}

A holomorphic function $F$ on
${\mf H}_n$ is called a modular form for $\Gamma$ of weight $k$ if it satisfies the transformation law
\[ F\mid_k\gamma= F \qquad (\gamma=\left(\begin{smallmatrix}
A & B \\ C & D\end{smallmatrix}\right)\in \Gamma), \]
with the additional condition of being holomorphic at the cusps when $n=1$.
We denote the space of all such functions by $M^n_k(\Gamma)$. 

\subsection{Translations, Fourier expansion and cusp forms}
Let $\symn$ denote the group of all $n\times n$ symmetric matrices over $\z$.
We denote by 
\begin{equation}
\Gamma_\infty = \{ \pm \psmb 1_n &  S \\ 0_n & 1_n \psme \mid S=S^t \in \mcr S_\Gamma \} ; \q (\mcr S_\Gamma \subset \symn \text{ is a subgroup})
\end{equation}
the subgroup consisting of the translations in $\displaystyle \Gamma$, and $\Gamma_{0,\infty} = \{ g \in \Gamma \mid c_g=0_n\}$. We further let 
\begin{equation}
    \mathscr U(\Gamma)=\{  m(U) \in \Gamma \mid U \in \glnz \} \q (  m(
    U)=\psmb U^{t} & 0\\ 0 & U^{-1} \psme),
\end{equation}
be the subgroup of the embedded copy of $\glnz$ inside $\Gamma$. Further, we define the Siegel parabolic subgroup 
\begin{equation}
\Gamma_{0,\infty} := \{ g \in \Gamma \mid c_g=0 \},
\end{equation}
and the subgroup of $\glnz$ embedded in $\Gamma$ by
\begin{equation} \label{gln-gam}
    \mcr U_\Gamma := \{ U \in \glnz \mid \psmb
        U^t & 0 \\ 0 & U^{-1}
    \psme \in \Gamma  \} \q (=m^{-1}(\mcr U(\Gamma))).
\end{equation}
Note that $\mcr U_{\Gamma_n} = \glnz$. Note that every element $g \in \Gamma_{0,\infty}$ can be uniquely represented, in the above notation, as $g =  m(U)g_1$, where $g_1 \in \Gamma_{0,\infty} \backslash \Gamma$.

The lattice dual (w.r.t. to the trace pairing) to $\mcr S_\Gamma$ will be denoted as $\Lambda^*_\Gamma$, i.e., $\displaystyle \Lambda^*_\Gamma=\{ T=T^t \in M_n(\Q) \mid \tr (TS)\in \z \, \text{for all } S \in \mcr S_\Gamma\}$. It consists of matrices $T$ such that $\displaystyle t_{ij} \in \frac{1}{2 n_{ij}} \z$. For each $(i,j)$, the positive integer $n_{ij}$ is defined to be the generator of the subgroup of $\z$ consisting of the integers $s_{ij}$, as $S=(s_{pq})$ varies in $\mcr S_\Gamma$.
When there is no confusion we drop the dependence on $\Gamma$ from the notation. We denote by $\Lambda_\Gamma:= \{ T \in \Lambda^*_\Gamma \mid T>0 \}$.

Every $F \in M^n_k(\Gamma)$ can be expressed by a Fourier series of the form
\begin{equation} \label{fe}
    F(Z) = \sumn_{T \in \Lambda^*_\Gamma, T \ge 0 } a_F(T) e(TZ);
\end{equation}
and the `denominator' of $T$ is $\omega$, i.e., it is the smallest positive integer such that $T \in \frac{1}{\omega} \Lambda^*_n$ (cf. \eqref{nij-omega}), where $\Lambda_n$ denotes the usual half-integral positive semi-definite matrices of size $n$. It is in this sense $\omega$ may be interpreted as the `maximal-width of the cusp' at $\infty$ of $\Gamma$. We also refer to the $n_{ij}$ as the cusp width at $(i,j)$. See the next section.

For each $g \in \Gamma_n$, the modular form $F|_k g$ on $g^{-1}\Gamma g$ has a Fourier expansion like \eqref{fe}, and if for all such $g$, the Fourier expansion of $F|_k g$ is supported only on $\Lambda_{g^{-1} \Gamma g}$, i.e., on the cone of the positive definite half-integral matrices,  we say that $F$ is a cusp form.
Let $\skgam$ denote the space of holomorphic Siegel cusp forms on $\Gamma$ of scalar weight $k$.

If $F,G \in \skngam$, we define their Petersson inner product, denoted by $\lan F,G \ran=\lan F,G \ran_\Gamma$ by the integral pairing
\begin{align} \label{pet-def}
    \lan F,G \ran = \int_{\Gamma \backslash \hn} F(Z) \overline{G(Z)} \det(Y)^k d \mu(Z),
\end{align}
where $d \mu(Z) = dX \, dY \det(Y)^{-\frac{n+1}{2}}$ is the invariant volume element on $\hn$ and $\Gamma \backslash \hn$ denotes any fundamental domain for the action of $\Gamma$ on $\hn$.

\subsection{Cusp configuration}
Let us discuss a bit more about the cusp at $\infty$. Here there are two quantities which are relevant: the index and the ``maximal width at $\infty$'' (and analogously at other cusps):
\begin{align} \label{nij-omega}
    |\mcr S_\Gamma| := |\symn \colon \mcr S_\Gamma|= \prod\nolimits_{i \le j} n_{ij}(\Gamma), \q \omega=\omega_\Gamma := \mrm{lcm.}_{i \le j}(n_{ij}).
\end{align}
We call the collection $\mc C(\Gamma):= \{ n_{ij}(\Gamma) \}$ to be the cusp-width configuration for $\Gamma$ at the cusp $\infty$.

For the principal congruence subgroup $\Gamma^{(n)}(N)$ (see Section~\ref{smf-def}), things simplify a bit: one has the exact description $\mcr S_\Gamma = N \cdot \symn$, $ |\mcr S_\Gamma| =N^{\frac{n(n+1)}{2}}$ and $\omega_\Gamma=N$. Unfortunately not all congruence subgroups have this simple description as the example below shows, unlike when $n=1$. Clearly $\omega_\Gamma | N$ for all $\Gamma$. Moreover for at least one conjugate $G$ of $\Gamma$, one must have $\omega_G=N$.

\begin{rmk} \label{cusp-config}
    In this remark we want to show that the cusp-width configuration at $\infty$ may not be constant, i.e., $n_{ij}(\Gamma)$ are not constant multiples of a fixed integer.
    We show this for some conjugates of $\Gamma:=\Gamma^{(2)}_0(p)$ as examples.

    We take a coset representative $g = \psmb A & B \\ C & D \psme$ in $\Gamma \backslash \Gamma_n$. We can always  assume without loss  $C$ is non-singular (cf. the proof of Corollary~\ref{pol-bd-fn}), but we won't use this. We just assume $C$ is upper-triangular -- and will choose specific representatives, in this form soon.

Let $G= g^{-1} \Gamma g$.
    For $G$, let $\displaystyle n_{11}=t_1, n_{12}=t_3, n_{22}=t_2$. Then the generators of  the lattice $\mcr S_G$ are $S_1=t_1 \psmb 1 & 0 \\ 0 & 0 \psme$, $S_3=t_3 \psmb 0 & 0 \\ 0 & 1 \psme$ and $S_2=t_2 \psmb 0 & 1 \\ 1 & 0 \psme$ respectively. Then $t_j$ are the smallest positive integers such that
    \begin{align}
        \psmb 1 & S_j \\ 0 & 1 \psme \in G, \text{ that is } g \psmb 1 & S_j \\ 0 & 1 \psme g^{-1} \in \Gamma^{(2)}_0(p).
    \end{align}
A calculation shows that $C S_j C^t \equiv 0 \bmod p$ for any $C$ as above.  Letting $C= \psmb c_1 & c_2 \\ 0 & c_4 \psme$, this means
\begin{align}
    t_1 \psmb c^2_1 & 0 \\ 0 & 0 \psme \equiv 0 \bmod p, \, t_3 \psmb 2 c_1c_2 & c_1c_4 \\ c_1c_4 & 0 \psme \equiv 0 \bmod p, \, t_2 \psmb c^2_2 & c_2c_4 \\ c_2c_4 & c^2_4 \psme \equiv 0 \bmod p.
\end{align}
One can immediately translate these into divisibility conditions
\begin{align} \label{div}
    \frac{p}{t_1} \mid c_1^2, \q  \frac{p}{t_3} \mid (2 c_1c_2, c_1c_4), \q \frac{p}{t_2} \mid (c_2^2,c_4^2,c_2c_4).
\end{align}
    Now we recall the explicit set of coset representatives in question. These can be of three types  (see e. g. \cite{boech-naga}, \cite{das-anamby2}). More precisely, for $0\le j\le 2$ put
\begin{equation} \label{wj}
    w_j=\begin{pmatrix}
        0_j & 0 & -1_{j} & 0\\
        0 & 1_{n-j} & 0 & 0_{n-j}\\
        1_{j} & 0 & 0_{j} & 0\\
        0 & 0_{n-j} & 0 & 1_{n-j}
    \end{pmatrix} = \begin{pmatrix}
        a_j & b_j \\ c_j & d_j
    \end{pmatrix}.
\end{equation}
Further, for $A\in \GL_2(\mf Z)$ and $B\in M_2 (\mf Z)$, we put
$\displaystyle m(A):=\smat{A^t}{0}{0}{A^{-1}} \text{ and } n(B):=\smat{1}{B}{0}{1}$. Note that $m(A)$ of this paper is $m(A^t)$ in \cite{das-anamby2}.

    Let $\mc R(j)$ denote the set of matrices of the form $w_j n(B_j) m(A^t)$, where $B_j$ and $A$ are as in \cite{BNmodp} (lifts of matrices in $\mrm{Sym}_j(\mf F_p)$ and $ \GL_2(\mf F_p) /P_{2,j}$ to $\sptwo$, respectively). Then a set of right coset representatives for $\gp \backslash \sptwo$ is given by
\begin{equation}
    \gp \backslash \sptwo =\bigcup \nolimits_{j=0}^{2} \mc R(j).
\end{equation}
One may take representatives for $ \GL_2(\mf F_p) /P_{2,j}$ to be
\begin{align}
    \pmat{1}{0}{0}{1}, \q \pmat{0}{1}{-1}{x} \, (x \in \pfp).
\end{align}
    
We need to find $C=c_{\gamma}$ for each of the $\gamma \in \cup_j \mc R(j)$, and apply \eqref{div} to them. 
$\mc R(0)$ corresponds to identity, i.e. $\Gamma^{(2)}_0(p)$, so here $C=0$ and all the $t_i=1$. For $\mc R(2)$, one easily checks that $C=Id.$ which means all the $t_i=p$. In the case of $\mc R(1)$, with $c_1$ as in \eqref{wj}, one finds that $C=c_1 A^t \in  \{ \psmb a_1 & a_2 \\ 0 & 0 \psme \}$ with $(a_1,a_2) = (1,0), (0,-1)$. This gives in the first case $t_1=p$ and no conditions on $t_2, t_3$ -- so by minimality $t_2=t_3=1$. Similarly for the remaining case one finds that $t_1=1=t_3, t_2=p$. The the cusp-width configuration at all the cusps look like either $(1,1,1)$ or $(1,1,p)$ or $(p,1,1)$ or $(p,p,p)$.
\end{rmk}

\subsection{Siegel Poincar\'e series}
For a $T \in \Lambda_\Gamma$, we define the Poincar\'e series by
\begin{align} \label{pt-exp}
    P_T(Z)= P_{\Gamma;T}(Z) := \sumn_{\gamma \in \Gamma_\infty \backslash \Gamma} e(T Z) \mid_k \gamma,
\end{align}
which is an element of $\skgam$ provided $k>2n+1$.

For $T \in \Lambda_\Gamma, F \in \skgam$, one has the formula, which is proved by the usual unfolding method (see e.g. \cite[Theorem~8.2.3~(c)]{coh-str} for the case $n=1$)
\begin{align} \label{pt-peter}
    \lan F,  P_{T;G} \ran = \frac{ |\mcr S_\Gamma| \, c_{n,k} \,  a_F(T)}{\det( T)^{k-\frac{n+1}{2}}}, 
\end{align}
where we have put
\begin{align} \label{cnk-def}
   c_{n,k} = \pi^{\frac{n(n-1)}{4}} (4 \pi)^{\frac{n(n+1)}{2}-nk}\Gamma_n \left(k-\frac{n+1}{2} \right).
\end{align}
Let us further define the constants
\begin{align}
    a_{n,k} &:= 2^{-n(n+3)/2}\pi^{-n(n+1)/2}
      \prod \nolimits_{v=1}^{n}{\frac{\Gamma(k-\frac{v-1}{2})}{\Gamma(k-\frac{v+n}{2})}}
      \asymp_n k^{\frac{n(n+1)}{2}}, \\
      b_{n,k} &:= (2 \sqrt{\pi})^{-n(n-1)/2}  (-2\pi i)^{-nk} \prod \nolimits_{\nu =0}^{n-1} \Gamma(k-\nu/2). \label{bnk}
\end{align}
Note that our constant $a_{n,k}$ differs from that in \cite{klingen1} by a factor of $(2i)^{nk}$ due to different normalization. Our setting matches with that of \cite{cog-luo}. Further we record here the relationship between the $L^2$-average of the Fourier coefficients of cusp form in an orthonormal basis and the Fourier coefficients of Siegel Poincar\'e series:
\begin{align} \label{sumsq-fc}
   \sumn_{F \in \mcr B(\Gamma)} \, |a_F(T_0)|^2 =  c_{n,k}^{-1} \, \frac{|\mcr S_\Gamma|^{-1} \, \det(T_0)^{-\frac{n+1}{2}}}{  \Gamma_n(k-\frac{n+1}{2})} \cdot p_{T_0}(T_0).
\end{align}
We remind the reader that \eqref{sumsq-fc} is derived by writing $P_T$ as a linear combination of $F \in \mcr B(\Gamma)$, using orthogonality and \eqref{pt-peter}.

Next, we note the following Lipschitz formula (see e.g. \cite[p.~89]{klingen1}) relative to sub-lattices (for $k>n$)
\begin{align} \label{sym-sum}
    \sumn_{S \in \mcr S_\Gamma} \det(Z+S)^{-k} = \frac{b_{n,k}^{-1}}{|\mcr S_\Gamma|} \sumn_{T \in \Lambda_\Gamma} \det(T)^{k- \frac{n+1}{2}} \, e(TZ),
\end{align}
which comes from the Poisson summation formula along with holomorphy in $Z$ and the K\"ocher's principle if $n \ge 2$. These formulae together leads to the familiar geometric side of the Bergman kernel -- which we mention below, for the seemingly lack of an explicit reference for arbitrary $\Gamma$. The main point is that one needs to be sure about  the level dependence in the formulae explicitly. First we recall that the Bergman kernel is defined by
\begin{align}
    B_{k,\Gamma}(Z,W) := \sumn_{F \in \mcr B_{k,\Gamma}} F(Z) \overline{F(W)},
\end{align}
where $\mcr B_{k,\Gamma}$ denotes an orthonormal basis of $\skgam$. 

Then one can write, using \eqref{pt-peter} and \eqref{sym-sum} successively to get a geometric expression for the Bergman kernel (see e.g. \cite[p.~90, first display]{klingen1}):
\begin{align} \label{bergdef0}
     B_{k,\Gamma}(Z,W) &=  c_{n,k}^{-1} \frac{1}{|\mcr S_\Gamma|} \, \sum_{T \in \Lambda_\Gamma}  \det(T)^{k-\frac{n+1}{2} } P_{T; \Gamma}(Z) e(-T \overline{W} )\\
    &= a_{n,k} b_{n,k}^{-1}\sum_{g \in \Gamma_\infty \backslash \Gamma} J(g,Z)^{-k} \sum_{S \in \mcr S_\Gamma} \det \left( \frac{g(Z) -\overline{W} +S}{2i} \right)^{-k} \\
    & = \frac{a_{n,k}}{2} \sum_{g \in \Gamma} \det \Big(\frac{Z- \overline{W}}{2i}\Big)|^{(n)}_k g.\label{bergdef1}
\end{align}
The extra $2^{-1}$ differs from that in \cite{klingen1} since in \cite{klingen1}, the sum over $g$ is taken $\bmod{\{\pm1 \}}$.
We will use the following notation for the invariant kernel:
\begin{align} \label{bb-bk}
    \mbb B_{k,\Gamma}(Z,W) =   \frac{a_{n,k}}{2} \,  \det(Y)^{k/2} \det(V)^{k/2} \, B_{k, \Gamma}(Z,W).
\end{align}
We  put $B_{k, \Gamma}(Z) :=B_{k, \Gamma}(Z,Z)$, $\mbb B_{k, \Gamma}(Z) :=\mbb B_{k, \Gamma}(Z,Z)$ etc.. 

In view of future applications, we define, for any congruence subgroup $\Gamma \subset \Gamma_n$ and any $\mcr Y>0$, $T \in \Lambda_n$, 
\begin{align} \label{u-gamma}
     H_{\Gamma}(T, \mcr Y) :=   \sumn_{U \in \mcr U_\Gamma} \exp(- 2 \pi \tr \, T \mcr Y[U] ).
    \end{align}
One might think of $H_{\Gamma}(T, \mcr Y)$, say for $\Gamma=\Gamma_n$ as a sub-series of the usual theta series $\displaystyle \vartheta(iV)$ where for $Z \in \hn$, we put $\displaystyle \vartheta(Z) = \sum \nolimits_{X \in \z^{n \times n}} e( Z[X])$. For future usage, we record here the simple property:
\begin{align} \label{hgamma-ineq}
    H_{\Gamma}(T, \mcr Y) \le H_{\Gamma'}(T, \mcr Y),
\end{align}
wherever $\Gamma \subseteq \Gamma'$.

\section{Proof of the  Theorem~\ref{mainthm-wt}: weight aspect -- upper bounds} \label{upp-wt-sec}
We will first prove the statement for the group $\Gamma_n$ and then reduce the case of arbitrary $\Gamma$ to it. This proof is shorter than the hybrid-level aspect, and we believe is worth presenting separately.

\subsection{The case \texorpdfstring{$\Gamma=\Gamma_n$}{}}
We know from \eqref{bb-bk} that
\begin{align} \label{ank}
    \mbb B_{k,\Gamma_n}(Z) =   \frac{a_{n,k}}{2} \, \det(Y)^k B_k(Z).
\end{align}
We appeal to the Fourier expansion of the $B_k(Z,W)$, and put $Z=W$ to get
\begin{align} \label{bk-fe}
    B_k(Z) = b_{n,k}^{-1} \sumn_{T \in \Lambda_n} \det(T)^{k-\frac{n+1}{2} } P_T(Z) e(-T \overline{Z} ). 
\end{align}
First, we note that without loss, one can assume that $Z \in  \fn$ which implies in particular that $Y \gg 1$ and is Minkowski-reduced. 

Next, note that it is enough to show that for $Z \in \mc F^{(n)}_1$ one has
\begin{align} \label{pt-bound}
    P_T(Z) \ll_n 1.
\end{align}

Indeed, if we assume this, then
\begin{align} \label{bkz-fe-bd}
     B_k(Z) \ll b_{n,k}^{-1}  \sumn_T \det(T)^{k-\frac{n+1}{2} } \exp(- 2 \pi \tr T Y).
\end{align}

We know via Lipschitz's formula that for $k>n$ and any $Y_0>0$,
\begin{align} \label{lip-1}
\sumn_{S \in \symn} \det(iY_0+S)^{-k} = b_{n,k}^{-1} \sumn_T \det(T)^{k-\frac{n+1}{2} } \exp(- 2 \pi \tr T Y_0).
\end{align}
Quoting from \cite[Lemma~6.2]{das-krishna}, which is valid for any $Z \in \h_n$, we get, with $Z=iY_0$ that
\begin{align} \label{lip-bd}
    \sumn_S \det(iY_0+S)^{-k} \ll k^{\frac{n(n+1)}{4}} \det(Y_0)^{-k}.
\end{align}
Finally, gathering together \eqref{ank}, \eqref{bkz-fe-bd} and \eqref{lip-bd} we arrive at the desired bound:
\begin{align} \label{wt-pf}
    \mbb B_k(Z) \ll a_{n,k} \cdot k^{\frac{n(n+1)}{4}} \asymp k^{\frac{3n(n+1)}{4}}.
\end{align}

It therefore remains to show \eqref{pt-bound}, i.e., $P_T(Z) \ll 1$. Recall from the definition \eqref{pt-exp} that
\begin{align} \label{pt-exp-def}
    P_T(Z)= \sumn_{g \in \Gamma_\infty \backslash \Gamma_n} J(g,Z)^{-k} e(T g(Z)).
\end{align}
Such a $g \in \Gamma_n$ can be uniquely written as $g=m(U) \gamma$, where $U \in \mcr U_{\Gamma_n}$ and $\gamma \in \Gamma_{0,\infty}$. Then it follows that,
\begin{align} \label{pt-exp-U}
P_T(Z)= \sumn_{\gamma \in (\Gamma_n)_{ {0,\infty}} \backslash \Gamma_n} J(\gamma,Z)^{-k} \sumn_{U}  \det(U)^k e(T U^t \gamma(Z) U).
\end{align}

Let us discuss some heuristics about the function $H_{\Gamma_n}(T, \mcr Y)$ defined in \eqref{u-gamma}.
Via Poisson summation, the theta  series $\vartheta(Z)$ has the inversion formula $\vartheta(Z) = \det(-Z)^{-\frac{n}{2}} \vartheta(-Z^{-1})$ cf. \cite{andrianov2}. If we remove the $X=0$ term, one may expect that $\vartheta(iV)-1 \approx \det(V)^{-n/2}$ if $V$ is `small'. Based on this intution, we have the following proposition.

\begin{prop} \label{hty-lev1}
    For  $\mcr Y>0$, $T \in \Lambda_n$, and for any $D>n/2$, one has
    $\displaystyle H_{\Gamma_n}(T, \mcr Y) \ll_{D,n}  \frac{1}{\det(\mcr Y)^D}$.
\end{prop}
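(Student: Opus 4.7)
The plan is to majorize the sum over the unimodular matrices $\mcr U_{\Gamma_n} = \glnz$ by the full theta series on $\mf Z^{n\times n}$ and invoke Poisson inversion. Since $\glnz \subset \mf Z^{n\times n}\setminus\{0\}$, one has
\begin{equation}
    H_{\Gamma_n}(T, \mcr Y) \le \Theta_{T, \mcr Y} - 1, \q \Theta_{T, \mcr Y} := \sumn_{X \in \mf Z^{n\times n}} \exp\bigl(-2\pi \tr(T X^t \mcr Y X)\bigr),
\end{equation}
so it suffices to bound $\Theta_{T, \mcr Y}-1$. The exponent $\tr(T X^t \mcr Y X) = \|\mcr Y^{1/2} X T^{1/2}\|_F^2$ is a positive-definite quadratic form on $\mf R^{n^2}$ whose Gram matrix (via the tensor structure $T\otimes\mcr Y$) has determinant $(\det T)^n (\det \mcr Y)^n$. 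Poisson summation on $\mf Z^{n\times n}$ --- equivalently, the substitution $Y=XT^{1/2}$ which turns the sum into one over the lattice $\mf Z^{n\times n}T^{1/2}$ of covolume $(\det T)^{n/2}$, followed by the Gaussian Fourier transform --- then produces the inversion
\begin{equation}
    \Theta_{T, \mcr Y} = \frac{2^{-n^2/2}}{(\det T)^{n/2} (\det \mcr Y)^{n/2}} \sumn_{Z \in \mf Z^{n\times n}} \exp\bigl(-\tfrac{\pi}{2}\, \tr(T^{-1} Z^t \mcr Y^{-1} Z)\bigr).
\end{equation}
The implicit $T$-dependence in the prefactor will be absorbed by the universal bound $\det T \ge 2^{-n}$, valid for every $T\in\Lambda_n$ because $2T$ is then a positive definite symmetric integer matrix, so $\det(2T)\ge 1$.

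Next I would split into two regimes according to the size of $\mcr Y$. When $\det(\mcr Y)\le 1$, the Poisson-inverted identity gives the estimate directly: the $Z=0$ term contributes $O_n(1)\cdot(\det T\det \mcr Y)^{-n/2}$, and the nonzero $Z$ contribute a convergent theta in the ``large'' matrix $\mcr Y^{-1}$, bounded by $O_n(1)$. Combined with $\det T\ge 2^{-n}$ this yields $\Theta_{T, \mcr Y} - 1 \ll_n (\det \mcr Y)^{-n/2} \le (\det \mcr Y)^{-D}$ for any $D>n/2$, since $\det \mcr Y\le 1$. When $\det(\mcr Y)\ge 1$, I would instead work directly with the original series and exploit Gaussian decay. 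Using the invariance $H_{\Gamma_n}(T, V^t\mcr Y V) = H_{\Gamma_n}(T,\mcr Y)$ for $V\in \glnz$ (as $V\glnz = \glnz$), one may first replace $\mcr Y$ by a Minkowski-reduced conjugate so that $\mcr Y_{ii}\asymp_n \lambda_i(\mcr Y)$, and similarly for $T$. After these reductions the quadratic form is equivalent to $\sum_{i,j} T_{jj}\mcr Y_{ii} X_{ij}^2$ up to factors depending only on $n$, and $\Theta_{T, \mcr Y}$ factors as $\prod_{i,j}\theta_1(iT_{jj}\mcr Y_{ii})$, where $\theta_1(it) := \sum_{k\in \mf Z} \exp(-2\pi t k^2)$.

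The main obstacle will be the \emph{mixed-eigenvalue} regime where some eigenvalues of $\mcr Y$ are small and some are large, so that $\det \mcr Y$ is moderate but the factored product contains both ``small'' factors ($T_{jj}\mcr Y_{ii}\le 1$, contributing $(T_{jj}\mcr Y_{ii})^{-1/2}$ by one-variable Poisson) and ``large'' factors ($T_{jj}\mcr Y_{ii}\ge 1$, contributing $1+O(e^{-2\pi T_{jj}\mcr Y_{ii}})$). I plan to split the product along the threshold $T_{jj}\mcr Y_{ii}\lessgtr 1$ and observe that the exponential decay from even one large factor dominates the polynomial loss from the small factors, recovering the uniform estimate $\Theta_{T,\mcr Y}-1\ll_{n,D}(\det\mcr Y)^{-D}$ for every $D>n/2$. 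The threshold $D>n/2$ is sharp for this argument: it matches exactly what the $Z=0$ Poisson main term contributes in the small-$\mcr Y$ regime, so any smaller $D$ would already fail there.
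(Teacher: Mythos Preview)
Your majorization $H_{\Gamma_n}(T,\mcr Y) \le \Theta_{T,\mcr Y} - 1$ throws away exactly the structure that makes the proposition hold with exponent $D$ arbitrarily close to $n/2$: every column of $U\in\glnz$ is primitive, in particular nonzero. The full theta $\Theta_{T,\mcr Y}-1$ does \emph{not} obey $\ll\det(\mcr Y)^{-D}$ for $D$ near $n/2$ in the mixed-eigenvalue regime, and both of your cases break there. Take $n=2$, $T=1_2$, $\mcr Y=\mrm{diag}(M^2,M^{-1})$ with $M$ large (so $\det\mcr Y=M\ge 1$): your factorisation gives $\Theta_{T,\mcr Y}=\theta_1(iM^2)^2\,\theta_1(iM^{-1})^2\asymp M$, hence $\Theta_{T,\mcr Y}-1\asymp M$, whereas you need $M^{-D}$. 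The plan that ``exponential decay from even one large factor dominates'' fails because for large $t$ one has $\theta_1(it)=1+O(e^{-2\pi t})\approx 1$, not $\approx 0$; subtracting $1$ from the \emph{product} still leaves the contribution of rank-deficient integer matrices supported on the second row. The $\det\mcr Y\le 1$ case has the same defect: with $\mcr Y=\mrm{diag}(\epsilon^3,\epsilon^{-1})$ one has $\det\mcr Y=\epsilon^2\le 1$ but $\mcr Y^{-1}$ has the small eigenvalue $\epsilon$, so the dual theta over $Z\ne 0$ is $\asymp\epsilon^{-1}$, not $O_n(1)$ as you assert.

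The paper's proof instead factors $H$ itself over the columns of $U$, after Minkowski-reducing both $T$ and $\mcr Y$. Since each column $U_j\in\mf Z^n$ is primitive and hence nonzero, one gets $H\le\prod_{j=1}^n\sum_{X\in\mf Z^n\setminus\{0\}}\exp(-c_n v_j\,X^tX)$, and the elementary bound $\sum_{X\ne 0}\exp(-a\,X^tX)\ll_{D,n}a^{-D}$ (valid uniformly for all $a>0$ once $D>n/2$, via $e^x\ge 1+x^{D}$) yields $\prod_j v_j^{-D}\asymp_n\det(\mcr Y)^{-D}$. The exclusion of $X=0$ in \emph{every} factor is what rescues the large-$v_j$ contributions to the product and cannot be recovered after passing to the unrestricted $\mf Z^{n\times n}$-theta.
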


Let us assume \propref{hty-lev1} for the moment and finish the proof of \thmref{mainthm-wt}. Namely, from \eqref{pt-exp-U} we can write
\begin{align} \label{pt-ht}
    P_T(Z) \ll \sumn_{\gamma \in \Gamma_{0,\infty}\backslash \Gamma_n} |J(\gamma,Z)|^{-k} H_{\Gamma_n}(T, Y_\gamma) ,
\end{align}
where $Y_\gamma:= \im \gamma(Z)$. Then from \propref{hty-lev1}, we get the bound
\begin{align} \label{th-bd}
   H_{\Gamma_n}(T, Y_\gamma)   \ll \det(Y_\gamma)^{-n/2-\epsilon} =  \det(Y)^{-n/2-\epsilon} |\det(c_\gamma Z + d_\gamma)|^{n+2\epsilon}.
\end{align}
Using this in \eqref{pt-ht} we get
\begin{align} \label{pt-bd}
    P_T(Z) \ll \sumn_{\gamma \in \Gamma_{0,\infty}\backslash \Gamma_n } |\det(c_\gamma Z + d_\gamma)|^{-k+n+2\epsilon} \ll 1,
\end{align}
for any $Z \in \fn$ and $k \ge 2n+2$, if we invoke argument from \cite[eq.~(6.28) onwards]{das-krishna} and choose $\epsilon$ small depending only on $n$. We already know from from the arguments leading to \eqref{wt-pf} that this implies \thmref{mainthm-wt}. \QEDB

\subsection{Proof of \propref{hty-lev1}}
It remains to prove \propref{hty-lev1}.
Since $ \tr T  \mcr Y[U] = \tr T[U^t] \mcr Y$, by absolute convergence, and because the desired bounds depend only on $\det(\mcr Y)$, clearly we can assume that both $T$ and $\mcr Y$ are reduced. That $T$ can be assumed to be Minkowski-reduced (see \cite[Chapter~I, Section~2]{klingen1}) of course also follows from the definition of $P_T$ in level one. Then
\begin{align} \label{ht}
   H_{\Gamma_n}(T, \mcr Y)  \ll \sumn_{U}  e(- c_n \tr \, \mrm{diag}(T) \mcr Y[U^t]) \ll \sumn_{U}  e(- c_n \tr \mcr Y[U^t]) =  H_{\Gamma_n}(1_n, c_n \mcr Y),
\end{align}
where $\mrm{diag}(T) $ denotes the matrix consisting of the diagonal elements of $T$.

For a matrix $M$, let $M_j$ denote its $j$-th column. Then for some constant $c_n$ depending only on $n$,
\begin{align}
  H_{\Gamma_n}(1_n, c_n \mcr Y)   &\le  \sumn_{U} \exp(- c_n \tr \, U^tU \cdot \mrm{diag}(\mcr Y)) \\
    & =  \sumn_{U} \exp(- c_n  \, \sumn_{j=1}^n v_j \cdot U_j^tU_j  ) 
         \le \prod_{j=1}^n \big( \sumn_{U_j, \mrm{g.c.d.}(U_j)=1} \exp(- c_n  \, \sum_{j=1}^n v_j \cdot U_j^tU_j  ) \big), \label{th-bd1}
\end{align}
where $U_j$ varies over ``primitive'' integral vectors in $\z^n$, and
where $v_j$ are the diagonal elements of $\mcr Y$. We look at each sum over $U_j$. More generally, it is easy to check that for any number $a>0$,
\begin{align}
    \sumn_{X\ne 0} \exp(- a \cdot X^t X) \ll_n a^{-n/2-\epsilon},
\end{align}
because we can use the inequality $e^x>_{n,\epsilon}1+x^{n/2+\epsilon}$ for $x>0$ and note that $\displaystyle \sum_{X\ne 0} (1+X^t X)^{-n/2-\epsilon} \ll_{\epsilon,n} 1$.
Since $\mcr Y$ is also reduced, in effect, we get from \eqref{th-bd1} that
\begin{align} \label{th-bd-det}
    H_{\Gamma_n}(1_n, c_n \mcr Y)  \ll_{\epsilon,n}  \det(\mcr Y)^{-n/2-\epsilon},
\end{align}
thereby finishing the proof. \QEDB

\subsection{Proof of Theorem~\ref{mainthm-wt} for general congruence groups} \label{k-cof}

Let $\Gamma \subset \Gamma_n$ be one such congruence subgroup. If the left coset representatives are $\{\gamma_j\}$, then a fundamental domain can be chosen to be $\mc F_\Gamma=\cup_j \gamma_j \mc F^{(n)}_1$. Therefore it is enough to bound $\mbb B_k(\gamma_j(Z))$ for all $j$ for $Z \in \fn$. In other words we have to bound
\begin{align} \label{conjugate}
    \sumn_{F \in \mathscr  B(\Gamma) } \det(Y)^k |F|_k \gamma_j(Z)|^2 = \sumn_{H  \in \mathscr  B(\gamma_j^{-1} \Gamma \gamma_j) } \det(Y)^k |H(Z)|^2 ,
\end{align}
for $Z \in \fn$. The equality in \eqref{conjugate} follows since $F \mapsto F| g$ is an isometry from $\skngam$ to $\skngamconj$ for any $g \in \Gamma_n$, and $g=\gamma \gamma_j$ for some $j$ and $\gamma \in \Gamma$. 

\begin{rmk}
We note here that the  for any $g \in \Gamma_n$, one has $\sup(\skngam)=\sup(\skngamconj)$.  
\end{rmk}

We will require an elementary lemma in many of our arguments in  the rest of the paper.
\begin{lem} \label{passage}
    Let $A \subseteq B$ be subgroups of $\Gamma_n$ and $A_1 \subset A$, $B_1 \subset B$ be subgroups such that $A \cap B_1=A_1$. Then one has the natural inclusion $A_1 \backslash A \hookrightarrow B_1 \backslash B$.
\end{lem}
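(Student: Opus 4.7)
The plan is to produce the inclusion via the obvious set-theoretic map $\phi: A_1 \backslash A \to B_1 \backslash B$ sending $A_1 a \mapsto B_1 a$ for each $a \in A$, and then to verify that $\phi$ is well-defined and injective. Since $A \subseteq B$, the right coset $B_1 a$ makes sense inside $B_1 \backslash B$, so the assignment is at least meaningful; the content of the lemma is that the passage from $A_1$-cosets to $B_1$-cosets loses no information.

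First I would check well-definedness. Suppose $A_1 a = A_1 a'$ for some $a, a' \in A$, so that $a' a^{-1} \in A_1$. Because $A_1 \subseteq B_1$ (this inclusion is part of the hypothesis $A \cap B_1 = A_1$, which in particular forces $A_1 \subseteq B_1$), we have $a' a^{-1} \in B_1$, hence $B_1 a = B_1 a'$. This shows $\phi$ is a well-defined map of coset spaces.

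Next I would check injectivity, which is the one place where the identity $A \cap B_1 = A_1$ is actually used. Suppose $a, a' \in A$ satisfy $\phi(A_1 a) = \phi(A_1 a')$, i.e.\ $B_1 a = B_1 a'$. Then $a' a^{-1} \in B_1$, but since $a, a' \in A$ we also have $a' a^{-1} \in A$, so $a' a^{-1} \in A \cap B_1 = A_1$. Therefore $A_1 a = A_1 a'$, proving injectivity.

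There is essentially no obstacle here; the lemma is a purely formal statement about groups and the only subtlety is remembering to invoke the hypothesis $A \cap B_1 = A_1$ (rather than merely $A_1 \subseteq B_1$) at the injectivity step. Note that nothing in the argument uses the ambient group $\spn$, so the lemma holds verbatim for arbitrary groups; the restriction to $\spn$ in the statement is only because this is the setting of the paper.
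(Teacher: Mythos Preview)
Your argument is correct and is exactly the standard verification one would expect: well-definedness from $A_1 \subseteq B_1$ and injectivity from the full hypothesis $A \cap B_1 = A_1$. The paper in fact offers no proof at all for this lemma, merely labeling it ``elementary'' and moving on, so there is nothing to compare against; your write-up supplies precisely the details the paper omits.
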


We would now check that the same idea of using the Fourier expansion of the Bergman kernel and bounding the Poincar\'e series on $\fn$ as in the previous Subsection also works here. We next look at the Fourier expansion of $B_{k,G}(Z,W)$ for every conjugate $G$ of $\Gamma$. Recall from \eqref{pt-exp} the definition of the Poincar\'e series $P_{T,G}$ for $G$ at the cusp $\infty$.

If we put $G= \gamma_j^{-1} \Gamma \gamma_j $, then we can apply \lemref{passage} with $A=G, A_1=G_\infty, B=\Gamma_n, B_1=\Gamma_{n,\infty}$. Indeed Lemma~\ref{passage} applies in this setting since  $G \cap \Gamma_{n,\infty}=G_\infty$.
We fix a set of coset representatives $\{g_j\}_j$ for $G_\infty \backslash G $. Then the above says that $\{g_j\}_j$ also occurs as a subset of coset representatives of $\Gamma_{n,\infty} \backslash \Gamma_n$, and we always work with this choice.

Next we take term-wise absolute values in \eqref{pt-exp-def}, use the above remark, and positivity, to obtain
\begin{align} \label{pt-G}
    P_{T;G}(Z) \ll \sumn_{g \in \Gamma_{n,\infty} \backslash \Gamma_n} |J(g,Z)|^{-k} e(- 2 \pi \tr \, \mc T Y_g /\omega).
\end{align}
Here we also use the fact that $T$ has `denominator' $\omega$, i.e. $\omega T = \mc T \in \Lambda_n$, see Section~\ref{prelim}.

Even after taking absolute values, the summands are invariant under the translation groups: namely $ \relax \im(g_\infty g(Z))=\im(g(Z))$ for $g_\infty \in G_\infty, g \in G$, therefore \eqref{pt-G} holds.

We bounded the level $1$ Poincar\'e series by taking term-wise absolute values. Hence we actually bounded the RHS of \eqref{pt-G} in \eqref{pt-bd} with the only difference that in \eqref{pt-G} we have $Y_g/\omega$ instead of $Y$. Therefore,
\begin{align} \label{ptg-bd}
    P_{T;G}(Z) \ll \big( \frac{\omega^n}{\det(Y)} \big)^{\frac{n}{2} + \epsilon}.
\end{align}
Coming back to our desired objective: we recall
\begin{align} \label{bk-pt}
 B_{k,G}(Z) = |\mcr S_G|^{-1} \, b_{n,k}^{-1} \sumn_T \det(T)^{k-\frac{n+1}{2} } P_{T;G}(Z) e(-T \overline{Z} ),
\end{align}
which gives, after using \eqref{ptg-bd}, that
\begin{align} \label{bkg-w}
     B_{k,G}(Z) &\ll  \big( \frac{\omega^n}{\det(Y)} \big)^{\frac{n}{2} + \epsilon} \,  |\mcr S_G|^{-1} \, b_{n,k}^{-1} \sumn_{\mc T \in \Lambda_G} \det( T)^{k-\frac{n+1}{2} }  \exp(- 2 \pi \mc T Y ) \\
     & \ll   k^{\frac{n(n+1)}{4}} \, \big( \frac{\omega^n}{\det(Y)} \big)^{\frac{n}{2} + \epsilon} 
\end{align}
if we invoke the bound for the $T$-sum in \eqref{bkg-w} from Lemma~\ref{bkg-lips}. Then, the relation $\mbb B_{k,G}(Z) = \frac{a_{n,k}}{2} B_{k,G}(Z)$ gives the bound 
\begin{equation} \label{bkg-1}
    \mbb B_{k,G}(Z) \ll k^{\frac{3n(n+1)}{4}}  \, \big( \frac{\omega^n}{\det(Y)} \big)^{\frac{n}{2} + \epsilon} 
\end{equation}
on all of $\fn$.
This finishes the proof of \thmref{mainthm-wt}, i.e., the $k$-aspect upper bound of the conjecture for all congruence subgroups of $\Gamma_n$, upto the proof of Lemma~\ref{bkg-lips}, which is given below. The lower bound requires a separate treatment, and can be found in Section~\ref{lbds-all}.
\QEDB

We will need the following lemmas, which is  an adaptation of \cite[Prop.~3.7]{das-krishna}  to the present situation. As far as possible, we keep the notation as it is in \cite{das-krishna} for the convenience of  the reader.

\begin{lem} \label{count-bd}
  For $Y$  Minkowski reduced, $c=c_n>0$, depending only on $n$, we can bound the size of the set
  \begin{equation}
    \#\{T\in\Lambda_G|\tr(TY)\le ck\} \ll_{n,c}
    |\mcr S_G| \cdot k^{\frac{n(n+1)}{2}}\det(Y)^{-\frac{n+1}{2}}
  .\end{equation}
\end{lem}
\begin{proof}
This is the analogue of \cite[Lemma~4.8]{das-krishna}.
Since $Y$ is reduced, we can assume that it is  diagonal: this is because $Y_D \ll_n Y$. Put $T=(t_{i,j})$, and $t_i:=t_{i,i}$ for $1 \le i \le n$. Then we have 
  \begin{align}
    \tr(TY) = \sum_i t_i y_i \ll \tr(TY) \ll k
    \implies 1 \le t_i \ll k/y_i.
  \end{align}
  Since $T>0$, when $i \neq j$, one has $|2 t_{i,j}| \le (t_i t_j)^{1/2}$. We note that since $T \in \Lambda_G$, we have $\displaystyle t_{i,j} = n_{ij}^{-1} T_{ij} $ where $T_{ij} \in \z$ for all $i,j$.
  From this, it follows that the number of choices for $T$ is, up to a constant depending only on $n$,
  \begin{equation*} 
  \ll_n
  \prod_i \frac{n_{ii} k}{y_i} \cdot \prod_{i<j} \frac{n_{ij} k}{(y_i y_j)^{1/2}} = \frac{|\mcr S_G| \cdot k^{n+n(n-1)/2}}{\det(Y)^{1+(n-1)/2}} = \frac{|\mcr S_G| \cdot  k^{n(n+1)/2}}{\det(Y)^{(n+1)/2}}
  \end{equation*} 
  if we refer to \eqref{nij-omega}. This finishes the proof.
\end{proof}

\begin{lem} \label{p-febd}
Let  $Y$ be Minkowski-reduced and put, for a congruence subgroup $G \subset \Gamma_n$,
\begin{equation}
     q_k(Y):=  \sum_{T \in \Lambda_G} p(T)^{1/2} \, \det(Y)^{k/2} \, \exp(- 2 \pi \tr(TY)) ,
\end{equation}
where
\begin{equation}
  p(T)^{1/2}  \ll_{n}
  \frac{ ( 4\pi)^{nk/2} k^{\alpha} \det(T)^{ k/2-\beta} }
  {\Gamma(k)^{n/2}}.
\end{equation}
Then we have the bound (for any $\epsilon>0$):
  \begin{equation} \label{n/2bd}
    q_k(Y)
    \ll_{n, \epsilon} |\mcr S_G| \cdot \left( 
    \left(\frac{k^{n}}{\det(Y)}\right)^{(n+1)/2 -\beta}  \cdot k^{\frac{n}{4}+\alpha} +
    \exp(-c_0 k^\epsilon) \det(Y)^{-(n+1)/2}  \right),
  \end{equation}

\end{lem}

\begin{proof}
The proof is based on showing how one can adapt the arguments given in \cite{das-krishna} to the present situation.
We will indicate how to modify \cite[Prop.~3.7]{das-krishna} to this effect. It relies on two inputs: 

(i) The analytic part, via which we truncate the sum over $T$ to a finite support via the growth of the Gamma function and decay of the exponential factor in \eqref{n/2bd}, by  analyzing the size of the eigenvalues of the matrix $TY$ with respect to $k$; and

(ii) Bound the finite support by counting the number of $T$ in the support.

Of these, a moment's reflection into the proof of \cite[Prop.~3.7]{das-krishna} presented in loc. cit. shows that it carries over mutatis mutandis. It requires no specific information about $T$, whose support is the only different thing in this whole exercise. We get
    \begin{equation} \label{eq:febd}
     q_k(Y)
    \ll_n \left( 
      \sum_{T \in \mc C(Y)}{1} + \sum_{T \notin \mc C(Y)}{\prod_j M(\lambda_j)}
    \right)            
    k^{\alpha+\frac{n}{4}}\det(T)^{-\beta},
  \end{equation}  
where $\lambda_j$ are the eigenvalues of $TY$, $\displaystyle M(x) := \left(\frac{4\pi x}{k}\right)^{k/2} \exp( (k-4\pi x)/2)$; and
the subset $\mc C(Y)$ of $\Lambda_G$ is defined by
  \begin{align} \label{cydef}
    \mc C(Y):=
    \left\{ T\in\Lambda_G \midmid \text{ all eigenvalues of } TY
    \text{ are of magnitude } \frac{k}{4 \pi} + O(k^{\frac{1}{2} +\epsilon})
    \right\}
  .\end{align}
The contribution from the first sum is as it is in \cite{das-krishna}, see the lines in between (4.19) and (4.20) in loc. cit.. Thus the first sum is
\begin{align} \label{cysum}
    \sum_{T \in \mc C(Y)}k^{\alpha+\frac{n}{4}}\det(T)^{-\beta}
    \ll_{n} \, \# \mc C(Y) \cdot
    \det(Y)^{\beta} k^{-n \beta}
    k^{\alpha+\frac{n}{4} }
  .\end{align}
Now we treat this as a counting problem on $T$. For the first sum we simply note that $\mc C(Y)$ is contained in the set
  $\{ T \mid \tr(TY) \asymp \frac{nk}{4\pi} \}$ where the implied
  constant may depend only on $n$, and invoke Lemma~\ref{count-bd}. Using the bound for $\# \mc C(Y)$ from Lemma~\ref{count-bd}, we finally get
\begin{align}
    \sum_{T \in \mc C(Y)}k^{\alpha+\frac{n}{4}}\det(T)^{-\beta}
    \ll_{n} \,|\mcr S_G| \cdot 
    \left(\frac{k^{n}}{\det(Y)}\right)^{(n+1)/2 -\beta}  \cdot k^{\frac{n}{4}+\alpha} ,
\end{align}
    which matches with the first term in the statement of Lemma~\ref{p-febd}. 

    The treatment of the ``tail'' part, i.e. the second sum $\sum_{T \notin \mc C(Y)} \cdots$ in \eqref{eq:febd} is now verbatim the same as that in \cite{das-krishna}. We have 
    \begin{equation} \label{eq:expdecay}
  \sumn_{T \notin \mc C(Y)}{\prod \nolimits_j M(\lambda_j)}
    \cdot k^{\alpha+\frac{n}{4}}\det(T)^{-\beta}
    \ll_n |\mcr S_G| \cdot \exp \left( - c_0 k^{\epsilon} \right)\det(Y)^{-\frac{n+1}{2}}
  \end{equation}
    for some $c_0>0$ depending only on $n$. This is because the only modification required in the dyadic sum treatment to prove \eqref{eq:expdecay} is, again, the content of Lemma~\ref{count-bd}. This finishes the proof.
\end{proof}

\begin{lem} \label{bkg-lips} 
With the notation as above (cf. \eqref{bkg-w}), one has the bounds
\begin{align} \label{gam-lat}
 \mc S_G:=   |\mcr S_G|^{-1} \, b_{n,k}^{-1} \sumn_{T \in \Lambda_G} \det(T)^{k-\frac{n+1}{2} } \exp(- 2 \pi \, T Y ) \ll k^{\frac{n(n+1)}{4}} .
\end{align}

\end{lem}

\begin{proof}
Recall the definition of $b_{n,k}$ from \eqref{bnk}, which is a structural constant and is the same as in \cite{das-krishna}.
 Then quoting \cite[eq.~(6.10)]{das-krishna}, with $\ell = 2k-n-1$, we have (noting that $\mc S_G=\mc S_k$ in \cite{das-krishna} with the lattice, and parameters replaced suitably)
\begin{align} \label{sg-sum}
    |{\mc S_G \det(Y)^{l/2}}| \asymp_n |\mcr S_G|^{-1} \,
  \sumn_{T\in\Lambda_G} \frac{(4\pi)^{nl/2} \ell^{\frac{-n(n+2)}{4}} \det(TY)^{\ell/2} e^{2\pi\tr(TY)}}
  {\Gamma(l)^{n/2}}.
\end{align}
We smiply invoke Lemma~\ref{p-febd} with $k$ replaced by $l=2k-n-1$, $\alpha=\frac{-n(n+1)}{4}$ and $\beta=0$ (cf. \cite[Proof of Lemma~6.2]{das-krishna}).
This implies that  
\[ |{\mc S_G \det(Y)^{l/2}}| 
\ll |\mcr S_G|^{-1} \left( |\mcr S_G|  \det(Y)^{-(n+1)/2}(k^{\frac{n(n+1)}{4}}+\exp(-c_0k^{-\epsilon})  \right) ,
\]
from which the required bound $S_G  \ll k^{\frac{n(n+1)}{4}}$ drops out.
\end{proof}

\begin{rmk}
One could have, at the expense of a bit worse dependence on the level, argued simply as follows. Put $\mc T=\omega T$. One can use \eqref{lip-1}, \eqref{lip-bd} to get
    \begin{align} \label{bkg-w1}
     B_{k,G}(Z) &\ll |\mcr S_\Gamma|^{-1} \, \omega^{\frac{n^2}{2}+ \frac{n(n+1)}{2}- nk+\epsilon} \, b_{n,k}^{-1} \sumn_{\mc T \in \Lambda_n} \det(\mc T)^{k-\frac{n+1}{2} }  \exp(- 2 \pi \mc T Y/\omega ) \\
     & \ll |\mcr S_\Gamma|^{-1} \, \omega^{n^2+\frac{n}{2} - nk+\epsilon} \, k^{\frac{n(n+1)}{4}} \det(Y/\omega)^{-k},
\end{align}
leading to the bound $\displaystyle \mbb B_{k,G}(Z) \ll k^{\frac{3n(n+1)}{4}}  \, \omega^{n^2+\frac{n}{2} +\epsilon}$.
\end{rmk}

\begin{rmk} \label{k-arith}
We now let $\Gamma$ be an `arithmetic' subgroup of $\Gamma_n$. Recall that these are by definition groups that are commensurable with $\Gamma_n$ -- i.e., $\Gamma \cap \Gamma_n$ has finite index in both $\Gamma$ and $\Gamma_n$.
We claim that \thmref{mainthm-wt} should hold for such $\Gamma$ -- in the $k$ aspect, with $N$ replaced with $\omega_\Gamma$ as defined in \eqref{nij-omega}. 

This is because any such group must have parabolic elements: if we let $S= \psmb 1_n & 1_n\\ 0_n & 1_n \psme \in \Gamma_n$, then by the pigeonhole principle some two distinct powers $S^m$ and $S^n$ must be contained in the same coset of $\Gamma \cap \Gamma_n$ in $\Gamma$, whence the assertion. Thus there is a theory of Poincar\'e series for this $\Gamma$, as treated in this paper, and since we require no information on its Fourier expansion nor that of the boundary configuration of $\Gamma$, the results should go through. Note that one can choose a fundamental domain for $\Gamma$ by $\mc F_\Gamma \subset \mc F_{\Gamma \cap \Gamma_n}= \cup_g g \fn$, where $g$ runs over $(\Gamma \cap \Gamma_n )\backslash \Gamma_n$.
\end{rmk}

\section{Congruence subgroups: proof of \thmref{mainthm-prin}}
Our aim is to prove the following statement, from which \thmref{mainthm-prin} follows trivially.
\begin{thm} \label{mainthm-rest}
Suppose $k \ge 2n+2$, and that for all the (finitely many) conjugates $G$ of $\Gamma$ of level $N$, the following conditions hold.

(i) $G \subset \Gamma^{(2)}_0(\omega_G)$,

(ii) the index  of $\mcr U_{\Gamma^{(n)}(N)}$ in $\mcr U_G$ (cf. notation in \eqref{gln-gam}) is $O_n(1)$.

Then $\sup(\skgam) \ll_n k^{\frac{3n(n+1)}{4}}$.
\end{thm}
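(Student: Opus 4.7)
The plan is to adapt the weight-aspect proof to eliminate the level ($\omega_G$) dependence, using conditions (i) and (ii) in tandem.

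First, by the reduction in subsection~\ref{k-cof}, it suffices to bound $\mbb B_{k,G}(Z) \ll_n k^{3n(n+1)/4}$ uniformly on $\fn$ for each of the finitely many conjugates $G$ of $\Gamma^{(n)}(N)$. Starting from the Fourier expansion \eqref{bergdef0} together with the Lipschitz summation \eqref{sym-sum} and the bound \eqref{lip-bd} applied to the sub-lattice $\mcr S_G$, the task reduces (after absorbing the $|\mcr S_G|^{-1}$ factor) to the sharper Poincar\'e series estimate
\begin{equation}
    |P_{T;G}(Z)| \ll_n 1 \qquad (Z \in \fn, \, T \in \Lambda_G),
\end{equation}
uniformly in $\omega_G$. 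Establishing this is the core task; once it holds, the desired bound on $\mbb B_{k,G}$ follows exactly as in the $\Gamma = \Gamma_n$ subcase.

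Second, to prove this sharper Poincar\'e series bound, I decompose $g = m(U)\gamma$ with $U \in \mcr U_G$ and $\gamma \in G_{0,\infty} \backslash G$, yielding upon taking absolute values
\begin{equation}
    |P_{T;G}(Z)| \le \sum_{\gamma \in G_{0,\infty} \backslash G} |J(\gamma,Z)|^{-k} \, H_G(T, Y_\gamma),
\end{equation}
with $H_G$ as in \eqref{u-gamma}. The two hypotheses are then used in complementary ways. Condition (ii) asserts $[\mcr U_G : \mcr U_{\Gamma^{(n)}(N)}] = O_n(1)$, so $H_G \ll_n H_{\mcr U_{\Gamma^{(n)}(N)}}$, and the latter admits a substantially better bound than that of \propref{hty-lev1}, because $\mcr U_{\Gamma^{(n)}(N)} = \{U \in \glnz : U \equiv 1_n \bmod N\}$ is sparse (its non-identity elements are spaced apart by $\asymp N$). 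Meanwhile, condition (i) forces $c_\gamma \equiv 0 \bmod \omega_G$, so writing $c_\gamma = \omega_G \tilde c_\gamma$ produces an extra factor of size $\omega_G^{-nk}$ in each term $|J(\gamma, Z)|^{-k}$ with $c_\gamma \ne 0$.

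Third, I combine these two gains. The $\omega_G^{n^2/2+\epsilon}$ blowup seen in the weight-aspect case (cf.~\eqref{bkg-1}) originated from the fact that $T \in \Lambda_G$ has denominator $\omega_G$, forcing the naive adaptation of \propref{hty-lev1} to pay this cost. Here, the combined $\omega_G^{-nk + n(n+1)/2}$ decay from condition (i) — the second exponent accounting for the size of the admissible set of $(c,d)$-pairs compatible with the lattice $\mcr S_G$ — compensates the blowup provided $-nk + n(n+1)/2 + n^2/2 < 0$, i.e.\ roughly when $k > n^2/2 + n$, which is precisely the hypothesis of the theorem. The $c_\gamma = 0$ contribution, corresponding to $\gamma \in G_{0,\infty}$, is handled separately by a direct Lipschitz-type computation on $\mcr S_G$ and contributes $O_n(1)$ after summation over $\mcr U_G$ (again using condition (ii)).

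The main obstacle will be the precise control of $H_G(T, Y_\gamma)$ when $T$ has denominator $\omega_G$: since the proof of \propref{hty-lev1} implicitly uses $\mrm{diag}(T) \gg 1$, its naive adaptation introduces the bad $\omega_G^{n^2/2}$ factor. Properly exploiting condition (ii) so that this loss occurs only in combination with the $\omega_G$-gain from condition (i), and then executing the bookkeeping delicately enough to keep the final sum bounded by an absolute constant (and not, say, by a small positive power of $\omega_G$), is the technical heart of the proof. Once $|P_{T;G}(Z)| \ll_n 1$ is secured uniformly, the rest of the argument — the transition to $\mbb B_{k,G}$ and then to $\sup(\skgam)$ — is formal and follows the template of the weight-aspect proof verbatim.
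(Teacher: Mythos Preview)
Your overall architecture is right: reduce to $|P_{T;G}(Z)|\ll_n 1$ on $\fn$ for each conjugate $G$, and split according to $c_\gamma=0$ versus $c_\gamma\neq 0$. The paper does exactly this (\lemref{cgnot0-lem} and \lemref{cg=0-lem}, combined in \propref{poin-bd}). However, the way you propose to execute the two halves has real problems.

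For the $c_\gamma\neq 0$ part, two things go wrong. First, your claim that condition~(i) yields a gain of size $\omega_G^{-nk}$ is incorrect: $c_\gamma$ need not be invertible, and if $\mrm{rank}(c_\gamma)=r$ one only obtains $|\det(c_\gamma Z+d_\gamma)|\gg_n \omega_G^{\,r}$ (see the Siegel factorisation used in the proof of \lemref{cgnot0-lem}). So the decay is $\omega_G^{-rA}$ with $r$ possibly equal to $1$, not $\omega_G^{-nk}$. Second, and more seriously, you propose to control $H_G(T,Y_\gamma)$ for $c_\gamma\neq 0$ via condition~(ii). This does not work: the sparseness argument for $\mcr U_{\Gamma^{(n)}(N)}$ requires $Y_\gamma\gg_n 1$, but for $c_\gamma\neq 0$ the matrix $Y_\gamma$ may be arbitrarily small, and the best one gets is a product of $(1+v_j^{-D})$ over the diagonal entries $v_j$ of $Y_\gamma$, which cannot be summed over $\gamma$. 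The paper flags this explicitly in the Remark following \propref{poin-bd}. The correct route for $c_\gamma\neq 0$ is to \emph{not} use condition~(ii) at all: embed $G_\infty\backslash G$ into $\Gamma_{n,\infty}\backslash\Gamma_n$ via \lemref{passage}, apply the level-one bound \propref{hty-lev1} to $H_{\Gamma_n}(\mc T,Y_\gamma/\omega_G)$ (paying $\omega_G^{nD}$ with $D$ slightly larger than $n/2$), and then absorb this loss using the $\omega_G^{-A}$ gain from condition~(i) with $A>nD$. No delicate balancing of exponents in $k$ is needed; the $c_\gamma\neq 0$ piece is simply $O_A(\omega_G^{-A})$ for any fixed $A\ge 0$.

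For the $c_\gamma=0$ part, your idea is correct and matches \lemref{cg=0-lem}: this is where condition~(ii) is actually used, and here $Y_\gamma=Y\gg_n 1$ since $Z\in\fn$. One subtlety you gloss over is that $T\in\Lambda_G$ can only be assumed reduced under $\mcr U_G$, not under $\glnz$; the paper writes $\mc T=T_0[W]$ with $T_0$ Minkowski-reduced and an unknown $W\in\glnz$, and then uses the coset decomposition modulo $\mcr U_{\Gamma^{(n)}(N)}$ together with $\|U\|\gg N$ for non-exceptional $U\equiv W\bmod N$ to conclude $H_G(T,Y/\omega_G)\ll_n 1$. Your phrase ``direct Lipschitz-type computation'' does not capture this; it is a pointwise exponential-sum bound, not a Poisson summation.
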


Since our assumption about the `maximal' width of the cusps is the same for all these conjugates, it is enough to just work with $\Gamma$ and then apply it to its conjugates. Our basic strategy will be the same as that of \thmref{mainthm-wt}. But we will treat the estimate of $P_{T,\Gamma}$ differently. Namely we will isolate the contribution of those $g \in \Gamma_{\infty} \backslash \Gamma$ in the definition of $P_{T,\Gamma}$ for which $c_g=0$ and handle the rest separately. On these elements $|J(g,Z)|$ equals the minimum value $1$ as $Z$ varies in $\fn$, so the factors $J(g,Z)^{-k}$ attain their maximum value.
We state the necessary ingredients in two lemmas.

\begin{lem} \label{cgnot0-lem}
Let $\Gamma \subset \Gamma_n$ be a congruence subgroup with maximal cusp-width $\omega=\omega_\Gamma$. Suppose that $\Gamma \subset \Gamma^{(2)}_0(\omega)$.
For any $A \ge 0$ depending only on $n$, and for $k > 2n+1 + A$,
\begin{align}
    P_{T;\Gamma}(Z) = \sum_{g \in \Gamma_{\infty} \backslash \Gamma, \, c_g=0} e(T g(Z)) + O_A(\omega^{-A}), \q \q (Z \in \fn).
\end{align}
\end{lem}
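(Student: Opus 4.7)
The plan is to split $P_{T;\Gamma}(Z)$ according to whether $c_g = 0$, identify the $c_g = 0$ piece with the stated main term, and show the complementary tail is $O_A(\omega^{-A})$ uniformly on $\fn$.

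Following \eqref{pt-exp-U}, write every $g \in \Gamma_\infty \backslash \Gamma$ uniquely as $g = m(U)\gamma$ with $U \in \mcr U_\Gamma$ and $\gamma \in \Gamma_{0,\infty} \backslash \Gamma$; the condition $c_g \ne 0$ amounts to $c_\gamma \ne 0$, and $|J(g,Z)| = |J(\gamma,Z)|$ while $\im g(Z) = Y_\gamma[U]$ with $Y_\gamma = \im\gamma(Z)$. Taking absolute values,
\begin{equation}
\Big| P_{T;\Gamma}(Z) - \sum_{g:\, c_g = 0} J(g,Z)^{-k} e(Tg(Z)) \Big| \le \sum_{\gamma \in \Gamma_{0,\infty}\backslash\Gamma,\, c_\gamma \ne 0} |J(\gamma,Z)|^{-k}\, H_\Gamma(T, Y_\gamma).
\end{equation}
The argument of \propref{hty-lev1} applied to $\mcr U_\Gamma \subseteq \glnz$ gives $H_\Gamma(T, Y_\gamma) \ll_\epsilon \det(Y_\gamma)^{-n/2-\epsilon}$; combining with $\det Y_\gamma = \det(Y)\,|J(\gamma,Z)|^{-2}$, the right-hand side above is $\ll \det(Y)^{-n/2-\epsilon} \mcr E$, where
\begin{equation}
\mcr E := \sum_{\gamma \in \Gamma_{0,\infty}\backslash\Gamma,\, c_\gamma \ne 0} |J(\gamma,Z)|^{-(k - n - 2\epsilon)}.
\end{equation}

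The heart of the proof is the uniform lower bound
\begin{equation} \label{jlb-prop}
|\det(c_\gamma Z + d_\gamma)| \gg_n \omega \q (Z \in \fn,\ \gamma \in \spn,\ c_\gamma \ne 0,\ \omega \mid c_\gamma),
\end{equation}
which applies to all $\gamma \in \Gamma$ under the hypothesis $\Gamma \subset \Gamma^{(n)}_0(\omega)$. When $\mathrm{rank}(c_\gamma) = n$, write $c_\gamma Z + d_\gamma = c_\gamma(Z + c_\gamma^{-1} d_\gamma)$; the symplectic relation makes $c_\gamma^{-1} d_\gamma$ real symmetric, so $Z + c_\gamma^{-1} d_\gamma \in \hn$, and conjugation by $Y^{1/2}$ together with $|\det(M + iI)| \ge 1$ for real symmetric $M$ yields $|\det(Z + c_\gamma^{-1} d_\gamma)| \ge \det Y \gg_n 1$, whence $|J| \ge |\det c_\gamma|\,\det Y \gg \omega^n$. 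In the rank-deficient case $r = \mathrm{rank}(c_\gamma) < n$, a left-$\glnz$ change of basis (which does not affect $|J|$) brings $c_\gamma$ into Hermite form $\bigl(\begin{smallmatrix} c_1 \\ 0 \end{smallmatrix}\bigr)$ with $c_1 \in \mathrm{Mat}_{r \times n}(\mathbf Z)$ of full row rank. The symplectic constraint $c_\gamma d_\gamma^t = d_\gamma c_\gamma^t$ together with the rank-$n$ condition on the bottom $n$ rows of $\gamma$ force the last $n-r$ rows $d_2$ of $d_\gamma$ into $\ker c_1 \subset \mathbf Z^n$, spanning a sublattice of rank $n-r$. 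A Laplace expansion of $\det(c_\gamma Z + d_\gamma)$ along those rows then reduces, modulo a nonzero integer Pl\"ucker coordinate of $d_2$, to the full-rank $r \times r$ case applied to a principal submatrix of $Z$, giving $|J| \gg \omega^r \ge \omega$.

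Granting \eqref{jlb-prop}, extract $|J(\gamma,Z)|^{-A} \ll_A \omega^{-A}$ and inflate the $\mcr E$-sum to all of $\Gamma_n$ using \lemref{passage} (applied with $\Gamma \cap (\Gamma_n)_{0,\infty} = \Gamma_{0,\infty}$):
\begin{equation}
\mcr E \ll_A \omega^{-A} \sum_{\gamma \in (\Gamma_n)_{0,\infty}\backslash \Gamma_n} |J(\gamma,Z)|^{-(k - n - 2\epsilon - A)} \ll_A \omega^{-A},
\end{equation}
by the Godement-type convergence argument of \cite[eqn.~(6.28) onward]{das-krishna}, valid whenever $k - n - 2\epsilon - A > n + 1$; upon choosing $\epsilon$ small this follows from the hypothesis $k > 2n+1+A$. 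The main obstacle is establishing \eqref{jlb-prop} in the rank-deficient case: controlling the principal minor of $Y$ that appears from the Laplace expansion requires Minkowski reduction on $\fn$ to bound all relevant principal minors below by a constant depending only on $n$.
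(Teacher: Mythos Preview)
Your overall strategy is the paper's: split off $c_g=0$, majorize the rest by the $H$-sum, use the lower bound $|J(\gamma,Z)|\gg\omega$ (from $\Gamma\subset\Gamma^{(n)}_0(\omega)$) to pull out $\omega^{-A}$, and then inflate to $\Gamma_n$ for convergence. Two genuine gaps remain, though.

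\textbf{The $H$-bound is not uniform in $T$.} Your claim $H_\Gamma(T,Y_\gamma)\ll_\epsilon \det(Y_\gamma)^{-n/2-\epsilon}$ cites \propref{hty-lev1}, but the proof of that proposition uses that the \emph{reduced} $T$ has diagonal entries $\ge 1$, which holds for $T\in\Lambda_n$ and fails for $T\in\Lambda_\Gamma$, where the diagonal can be as small as $1/\omega$. The honest bound is $H_\Gamma(T,Y_\gamma)\ll (\omega^n/\det Y_\gamma)^{n/2+\epsilon}$. The paper makes this explicit by writing $\mc T=\omega T\in\Lambda_n$ and applying the proposition to $H_{\Gamma_n}(\mc T,Y_g/\omega)$, which produces a factor $\omega^{D}$; this is then killed by extracting a correspondingly larger power of $|J|^{-1}$ (the paper takes $A=D+A'$ and ends with $\omega^{D-A}=\omega^{-A'}$). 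As written, your error term hides an uncontrolled $\omega$-dependence and the stated threshold $k>2n+1+A$ would not suffice.

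\textbf{The rank-deficient lower bound $|J|\gg\omega$.} After your left-$\glnz$ reduction $C=\bigl(\begin{smallmatrix} c_1\\ 0\end{smallmatrix}\bigr)$, the Laplace expansion along the last $n-r$ rows of $CZ+D=\bigl(\begin{smallmatrix} c_1Z+d_1\\ d_2\end{smallmatrix}\bigr)$ is a \emph{sum}
\[
\det(CZ+D)=\sum_{|I|=n-r}\pm\det\big(d_2[:,I]\big)\,\det\big((c_1Z+d_1)[:,I^c]\big),
\]
and a single nonzero Pl\"ucker coordinate of $d_2$ does not prevent cancellation among the terms; nor is $(c_1Z+d_1)[:,I^c]$ of the form $C'Z'+D'$ for a \emph{principal} submatrix $Z'$ of $Z$. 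The paper instead invokes Siegel's decomposition with both a left and a right $\glnz$ change: there exist $U,W\in\glnz$ with
\[
UC=\begin{pmatrix}C_1&0\\0&0\end{pmatrix}W^t,\qquad UD=\begin{pmatrix}D_1&0\\0&1_{n-r}\end{pmatrix}W^{-1},
\]
$C_1\in\mathrm{Mat}_r$ nonsingular, which collapses the determinant to a single block: $|\det(CZ+D)|=|\det(C_1\,Z[W]_*+D_1)|$ with $Z[W]_*$ the leading $r\times r$ block of $W^tZW$. Now $C_1^{-1}D_1$ is real symmetric and $\det(\im Z[W]_*)\gg_n 1$ on $\fn$, so $|\det(CZ+D)|\gg_n|\det C_1|\ge\omega^r\ge\omega$. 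Either adopt this two-sided reduction, or supply an argument that controls the full Laplace sum; the one-term heuristic does not.
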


\begin{proof}
    We write $T = \omega \mc T$ with $\mc T \in \Lambda_n$.
    It is enough to consider the terms with $c_g \neq 0$. For these terms we have the following:
\begin{align} 
     \sum_{c_g \ne 0, \, g \in \Gamma_{0,\infty} \backslash \Gamma}  J(g,Z)^{-k} \sum_{U \in \mcr U_\Gamma}  e(\frac{1}{\omega}\mc T[U] g(Z) )  \le   \sum_{c_g \ne 0, \, g \in (\Gamma_n)_{0,\infty} \backslash \Gamma_n}  |J(g,Z)|^{-k} \sum_{U \in \glnz}  e(- \frac{2 \pi}{\omega} \, \mc T[U] Y_g ) ,   \n 
\end{align}     
where we have used the embedding from \lemref{passage} and have put $Y_g:=\im g(Z)$. The condition $c_g \neq 0$ is stable under the aforementioned embedding. If we look at \eqref{u-gamma}, then the expression in the RHS of the above inequality is simply bounded by
\begin{align}
    &   \sum_{c_g \neq 0} |J(g,Z)|^{-k}  H_{\Gamma_n}(\mc T, Y_g/\omega) \\
    & \ll \max\{ |J(g,Z)|^{-A} \colon c_g \neq 0 \} \sumn_{  c_g \neq 0} |J(g,Z)|^{-k+A }   H_{\Gamma_n}(\mc T, Y_g/\omega) \\
    & \ll  \max\{ |J(g,Z)|^{-A} \colon c_g \neq 0 \} \, \omega^D \, \sumn_{  c_g \neq 0} |J(g,Z)|^{-k+A +D}  ; \label{g-not0}
\end{align}
where in \eqref{g-not0}, we have applied \propref{hty-lev1} with $\mcr Y=Y \gg 1$, and take $D>n/2$ depending only on $n$. We would choose $A,D$ as a functions of $n$ only.

Since $Z \in \fn$, the sum over $g$ converges and is $O(1)$ as soon as $k-A-D>n+1$ (cf. \cite[p.~24]{das-krishna}).
Next, we look at each summad in \eqref{g-not0}. For these summands, since we have $c_g \ne 0$, i.e. $\mrm{rank}(c_g) \ge 1$, and $Y \gg 1$, the idea is that, in essence, we can 
`extract' a power of $N$ from the one we factorized by $J(g,Z)$. More precisely, let $\mrm{rank}(C)=r \ge 1$. Then from Siegel \cite{siegel1935analytic} (see also \cite[Lemma~3.1]{das2015nonvanishing})
  we can  find $U,W \in \glnz$ and $C_1, D_1 $ of size $r$ such that
  \begin{align}
    (UC, UD) = \left(
    \begin{pmatrix} C_1 & 0 \\ 0 & 0 \end{pmatrix} W^t ,
    \begin{pmatrix} D_1 & 0 \\ 0 & I_{n-r}\end{pmatrix} W^{-1}
    \right). \q \q (\mrm{rank}(C_1)=r)
  \end{align}
  Using this we see that 
  \begin{align}
    |\det(CZ+D)| &= \left| \det \left(
    \begin{pmatrix}C_1 & 0 \\ 0 & 0\end{pmatrix} Z_0[W]
    + \begin{pmatrix}D_1 & 0 \\ 0 & I_{n-r} \end{pmatrix}
    \right) \right| \\
    &= \left|\det(C_1 Z[W]_* +D_1)\right|,
  \end{align}
  where $Z[W]_*$ is the leading $r \times r$ sub-matrix of $Z[W]$. Therefore,
  \begin{align}
      |\det(CZ+D)| \ge \det(C_1) |\det(Z[W]_* +C_1^{-1} D_1)| \gg \det(C_1) \ge \omega^r,
  \end{align}
since $C_1^{-1} D_1$ is symmetric, $\displaystyle Z[W]_*=X[W]_*+i Y[W]_* $, and with $\Omega:=(Y[W]_*)^{1/2} $,
\begin{align}
|\det(Z[W]_* +C_1^{-1} D_1 )| = \det(Y[W]_*)\, |\det \big(1_r+i \, \Omega^t ( X[W]_* +C_1^{-1} D_1) \Omega \big)| \ge  \det(Y[W]_*) \gg 1.
\end{align}
This implies that in \eqref{g-not0}, we have
\begin{align}
   \omega^D \, \max\{|J(g,Z)|^{-A} \colon c_g \neq 0 \} \le \omega^{D-A}.
\end{align}
The assertion of the lemma is now obvious, if we choose $D=n/2+\epsilon$, and $A=D+A'$ for $A' \ge 0$. The implied constant will depend at most on $A'$ since $|J(g,Z)| \ge 1$ as $Z \in \fn$.
\end{proof}


\begin{lem} \label{cg=0-lem}
Let $\Gamma \subset \Gamma_n$ be a congruence subgroup with maximal cusp-width $\omega=\omega_\Gamma$. Let $T \in \Lambda_\Gamma$. Suppose that the index of $\mcr U_{\Gamma^{(n)}(N)}$ in $\mcr U_G$ is $O_n(1)$. Then,
    $ \displaystyle H_{\Gamma}(T, Y/\omega) \ll 1$ for all $Y \gg_n 1$.
\end{lem}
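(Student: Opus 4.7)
The plan is to adapt \propref{hty-lev1} to the sparser index set $\mcr U_\Gamma$ by exploiting the coset decomposition afforded by the finite-index hypothesis. Writing $\mc T := \omega T \in \Lambda_n$, the sum becomes $H_\Gamma(T, Y/\omega) = \sum_{U \in \mcr U_\Gamma} \exp(-\frac{2\pi}{\omega^2} \tr \mc T Y[U])$. By the hypothesis, decompose $\mcr U_\Gamma = \bigsqcup_{j=1}^J U^{(j)} \mcr U_{\Gamma^{(n)}(N)}$ with $J \ll_n 1$. Normality of $\mcr U_{\Gamma^{(n)}(N)}$ in $\glnz$ (inherited from that of $\Gamma^{(n)}(N)$ in $\Gamma_n$) makes $\mcr U_\Gamma/\mcr U_{\Gamma^{(n)}(N)}$ a bounded-order subgroup of $\mrm{GL}_n(\mf Z/N)$, whose elements admit (by e.g.\ Jordan's theorem on finite linear groups) lifts $U^{(j)}$ with $\|U^{(j)}\|_{\mrm{op}} \ll_n 1$ and hence $\sigma_{\min}(U^{(j)}) \gg_n 1$.

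For each coset, I would write $U = U^{(j)}(1 + NW)$ with $W \in M_n(\mf Z)$ (constrained so that $U \in \glnz$) and apply the completing-the-square identity
\begin{align}
(1 + NW)\, \mc T\, (1 + NW)^t = N^2 \,\bigl(W + \tfrac{1}{N} I\bigr)\, \mc T\, \bigl(W + \tfrac{1}{N} I\bigr)^t ,
\end{align}
which is verified by direct expansion. Setting $X := W + \tfrac{1}{N} I$ and $Y_j := Y[U^{(j)}]$, this yields
\begin{align}
\tr \mc T\, Y[U] / \omega^2 = \tfrac{N^2}{\omega^2}\, \tr Y_j \, X \mc T X^t .
\end{align}

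Assuming $\mc T$ is Minkowski-reduced (see below), $\mc T \ge c_n I$ since the diagonal entries of a reduced element of $\Lambda_n$ are positive integers. Combined with $\lambda_{\min}(Y_j) \ge \lambda_{\min}(Y)\,\sigma_{\min}(U^{(j)})^2 \gg_n 1$ for $Y \gg_n 1$, and $N^2/\omega^2 \ge 1$ (from $\omega \mid N$), one obtains $\tr \mc T Y[U]/\omega^2 \gg_n \|X\|_F^2$. The resulting shifted Gaussian lattice sum $\sum_{W \in M_n(\mf Z)} \exp(-c_n \|W + \tfrac{1}{N} I\|_F^2)$ converges to $O_n(1)$ by column-wise factorization as in the proof of \propref{hty-lev1}. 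Summing over the $J \ll_n 1$ cosets completes the bound.

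The hard part will be the $\glnz$-reduction of $\mc T$: reducing $\mc T = \mc T'[V]$ shifts the sum index to a right-translate $\mcr U_\Gamma V^{-t}$, which by normality remains a union of $J \ll_n 1$ cosets of $\mcr U_{\Gamma^{(n)}(N)}$, but the translated representatives $U^{(j)} V^{-t}$ may fail to be bounded. The cleanest resolution is that in the intended Bergman-kernel application one can group the Fourier sum over $T$ by $\glnz$-equivalence (since $P_{T[U^t];\Gamma}$ and $P_{T;\Gamma}$ coincide up to sign for $U \in \mcr U_\Gamma$), effectively reducing to $\glnz$-reduced $\mc T$ from the outset; alternatively, general $\mc T$ can be handled directly using the inherent bound $\det \mc T \ge 2^{-n}$ together with AM-GM-type trace inequalities to bypass the reduction step entirely.
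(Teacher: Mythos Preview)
Your argument has a genuine gap at the step where you invoke Jordan's theorem to produce coset representatives $U^{(j)}$ with $\|U^{(j)}\|_{\mathrm{op}} \ll_n 1$. Jordan's theorem concerns the abstract structure of finite linear groups (existence of an abelian normal subgroup of bounded index); it says nothing about lifting elements of a finite subgroup of $\GL_n(\z/N\z)$ to elements of $\glnz$ with entries bounded independently of $N$. In fact such bounded lifts need not exist: for $n=2$ the matrix $V=\smat{1}{0}{m}{-1}$ with $2\le m\le N-2$ satisfies $V^2=1_2$, so $\mcr U_\Gamma:=\langle \glnz_N, V\rangle$ has index $2$ over $\glnz_N$, yet every element of the nontrivial coset $V\glnz_N$ has $(2,1)$-entry congruent to $m\bmod N$, hence of size $\ge\min(m,N-m)$. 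Without bounded representatives the quantity $\lambda_{\min}(Y_j)$ can be as small as $N^{-2}$, and your estimate $\tfrac{N^2}{\omega^2}\lambda_{\min}(Y_j)\lambda_{\min}(\mc T)\|X\|_F^2$ degenerates to $\omega^{-2}\|X\|_F^2$, which yields no useful bound. Your proposed remedies in the final paragraph do not repair this: grouping the Bergman-kernel sum by $\glnz$-equivalence is unavailable because $P_{T[U^t];\Gamma}=\pm P_{T;\Gamma}$ only for $U\in\mcr U_\Gamma$, not for arbitrary $U\in\glnz$; and an AM--GM argument based on $\det\mc T\ge 2^{-n}$ cannot control $\lambda_{\min}(\mc T)$, which may itself be tiny for unreduced $\mc T$.

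The paper's proof avoids this obstruction entirely. After reducing $\mc T=T_0[W]$ with $T_0$ Minkowski-reduced and absorbing $T_0\gg 1_n$, $Y\gg 1_n$, one is left with $\sum_{U\in\mcr U_\Gamma}\exp(-c_n\|WU\|^2/\omega)$. Splitting into the $O_n(1)$ cosets of $\glnz_N$, each becomes a sum over $U'\in\glnz$ with $U'\equiv W'\pmod N$ for some fixed $W'$ with entries in $[0,N-1]$ --- crucially, bounded only by $N$, not by $O_n(1)$. The point is then combinatorial: writing $U'=W'+NB$, all but $O(2^{n^2})$ choices of $B$ force some $|u'_{ij}|\ge N$, whence $\|U'\|^2\ge N\|U'\|$; since $N/\omega\ge 1$ this gives $\exp(-\|U'\|^2/\omega)\le\exp(-\|U'\|)$, and the remaining sum $\sum_{U'}\exp(-\|U'\|)$ is $O_n(1)$. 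The finitely many exceptional $U'$ are bounded trivially.
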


\begin{proof}
    First, let us note the subtle point here: the matrix $\mc T=\omega^{-1}  T$ can not be assumed to be Minkowski-reduced under $\glnz= \mcr U_{\Gamma_n}$, and we can only assume that it is reduced under $\mcr U_\Gamma$. The space of $\mcr U_\Gamma$ reduced matrices may be described as an union of the form $\cup_j U_j \mcr R_n$, where $\mcr R_n $ is the space of Minkowski-reduced matrices, and $\glnz = \cup_j \mcr U_\Gamma U_j$.

    But there exists some $W \in \glnz$ such that $\mc T=T_0[W]$, such that $T_0$ is reduced. Then,
    \begin{align} \label{hgam}
H_{\Gamma}(T, Y/\omega) = \sum_{U \in \mcr U_\Gamma} \exp(- \frac{2 \pi}{\omega} \tr \, \mc T  Y[U] ) \ll \sum_{U \in \mcr U_\Gamma} \exp(- \frac{1}{\omega} \tr \, T_0[WU] ) \ll 
\sum_{U \in \mcr U_\Gamma} \exp(- \frac{1}{\omega} \tr \,[WU] )
    \end{align}
since $T_0$ being reduced, satisfies $T_0 \gg \mrm{diag}(T_0) \gg 1_n$.

    Since the index of $\mcr U_{\Gamma^{(n)}(N)}$ in $\mcr U_G$ is $O_n(1)$, each $U \in \mcr U_\Gamma$ is congruent to finitely many matrices, say, $\{A_1,\ldots, A_t\}$, $t=O_n(1)$. Thus in \eqref{hgam}, we can split the sum over $U$ as $t$ sums of the form $\sum_{U \equiv W A_j \bmod N}$ where $1 \le j \le t$. It is enough to consider each such sum, and replace $WA_j$ by $W$.
    
    Next, we can assume without loss that $W$ has entries in $\{ 0,1, \ldots, N-1\}$, by considering $U_*W$, with $U_* \in \mcr U_{\Gamma^{(n)}(N)} = \{ V \in \glnz \mid V \equiv 1_n \bmod N \}$ -- the principal congruence subgroup in $\glnz$. This is possible since $U_{\Gamma^{(n)}(N)} \subset \mcr U_\Gamma$, and since for $W,V \in \glnz$, one has $U_*W=V$ for some $U_* \in U_{\Gamma^{(n)}(N)}$ if and only if $W \equiv V \bmod N$. Let us fix such a $W$.

    One can thus rephrase \eqref{hgam} as
\begin{align} \label{hgam1}
    H_{\Gamma}(T, Y/\omega) \ll  \sum_{U \in \glnz, \, U \equiv W \bmod N} \exp(- \norm{U}^2/ \omega) = \sum_{U \in \glnz, \, U \equiv W \bmod N} \exp(- \frac{1}{\omega} \, \sum_{ij} u^2_{ij}  ),
\end{align}
with $\norm{U} = (\tr \,  U^tU)^{1/2} $, and we have to show that this is $O(1)$. Write $u_{ij}=w_{ij} + b_{ij}N$ for some integers $b_{ij}$. Unless for all pairs $i,j$ one has $b_{ij}=0,-1$, we see that $U^tU \ge N^2$. The number of exceptional $U$, for which the above is not satisfied, can be estimated as $O(2^{n^2})$. Recall that $W$ is fixed. For the exceptional $U$ we simply bound the exponential by $1$ and for the rest we notice that with $x= \norm{U}^2$, one has $x \ge N x^{1/2}$. Using these in \eqref{hgam1}, we get, 
\begin{align} \label{hgam2}
    H_{\Gamma}(T, Y/\omega) \ll 1+ \sumn_{U \in \glnz} \exp(- \frac{N}{\omega} \norm{U} ) \ll 1+ \sumn_{U \in \glnz} \exp(-  \norm{U} ) \ll 1.
\end{align}
This finishes the proof of \lemref{cg=0-lem}.
\end{proof}

\lemref{cgnot0-lem} and \lemref{cg=0-lem} together show the following result, which is the backbone of this paper.
\begin{prop} \label{poin-bd}
    In the setting of \thmref{mainthm-rest}, one has the uniform bound 
    \begin{align} 
    \displaystyle  P_{T; \Gamma}(Z) \ll_n 1, \q \q (Z \in \fn)
    \end{align}
    for all $T \in \Lambda_\Gamma$ and $k \ge 2n+2$.
\end{prop}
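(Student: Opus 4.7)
The plan is to combine \lemref{cgnot0-lem} and \lemref{cg=0-lem} directly, since the hypotheses of \thmref{mainthm-rest} are exactly what those two lemmas need: hypothesis (i) gives $\Gamma \subset \Gamma^{(2)}_0(\omega_\Gamma)$, and hypothesis (ii) gives $[\mcr U_\Gamma : \mcr U_{\Gamma^{(n)}(N)}] = O_n(1)$.

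First I would apply \lemref{cgnot0-lem} with some fixed $A > 0$ depending only on $n$ (say $A = 1$); this yields, for $Z \in \fn$ and $k$ large enough in $n$,
\[
    P_{T;\Gamma}(Z) = \sum_{\substack{g \in \Gamma_\infty \backslash \Gamma \\ c_g = 0}} J(g,Z)^{-k} \, e(T g(Z)) + O_n(\omega^{-1}),
\]
and since $\omega \ge 1$, the error term is already $O_n(1)$. So it only remains to bound the remaining parabolic sum uniformly.

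Next I would parameterise the $c_g = 0$ cosets. Modulo $\Gamma_\infty$, every element of $\Gamma_{0,\infty}$ is represented (up to a factor of at most $2$ coming from $-1 \in \Gamma_\infty \cap \mcr U_\Gamma$) by $m(U)$ for some $U \in \mcr U_\Gamma$. For such $g = m(U)$ one has $|J(g,Z)| = |\det U^{-1}| = 1$ and $g(Z) = U^t Z U$, hence $|J(g,Z)^{-k} e(T g(Z))| = \exp(-2\pi \tr T \cdot Y[U])$. Writing $T = \omega^{-1} \mc T$ with $\mc T = \omega T \in \Lambda_n \subseteq \Lambda_\Gamma$ (legitimate because $T \in \frac{1}{\omega}\Lambda_n$), the parabolic sum is therefore majorised in absolute value by
\[
    \sum_{U \in \mcr U_\Gamma} \exp\bigl(-\tfrac{2\pi}{\omega} \tr \mc T \cdot Y[U]\bigr) = H_\Gamma(\mc T, Y/\omega),
\]
in the notation of \eqref{u-gamma}. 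Applying \lemref{cg=0-lem} to $\mc T$ (and using that $Z \in \fn$ forces $Y \gg_n 1$), this is $O_n(1)$, and combining with the previous step yields $P_{T;\Gamma}(Z) \ll_n 1$.

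There is no serious obstacle beyond identifying the correct splitting, since the analytic content is carried by the two preceding lemmas. The conceptual content is the clean dichotomy: the "non-parabolic" tail actually decays in $\omega$, because \lemref{cgnot0-lem} extracts a power of $\omega$ from each $|J(g,Z)|$ with $c_g \neq 0$ using $c_g \equiv 0 \bmod \omega_\Gamma$, while the "parabolic" $\mcr U_\Gamma$-theta sum is bounded independently of $\omega$ via the $N$-residue trick of \lemref{cg=0-lem}. Neither half alone yields a uniform bound, and it is precisely the joint application — enabled by hypotheses (i) and (ii) of \thmref{mainthm-rest} — that does.
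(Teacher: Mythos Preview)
Your proposal is correct and is exactly the argument the paper has in mind: the paper's own proof is the single sentence ``\lemref{cgnot0-lem} and \lemref{cg=0-lem} together show the following result,'' and you have filled in the splitting and the identification of the parabolic piece with $H_\Gamma(\mc T, Y/\omega)$ precisely as intended. One cosmetic remark: taking $A=1$ in \lemref{cgnot0-lem} forces $k>2n+2$, whereas $A=0$ already gives an $O_n(1)$ error and matches the weight range in the statement; this is harmless but worth tidying.
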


\begin{rmk} 
The reader may wonder why we can't apply \lemref{cg=0-lem} to the terms with $c_g \neq 0$. The reason is that $Y_g$ in that case may not be bounded below, and the argument only would give $H_{\Gamma}(T, Y_g/\omega) \ll \prod_j (1+\frac{1}{v_j^D})$, for some $D>0$, where $v_j$ are the diagonal elements of $Y_g$. We do not know how to handle this expression. We cannot assume that $Y_g$ is $\glnz$ reduced (cf. proof of \lemref{cg=0-lem}). One can check that $ \displaystyle Y_g^{-1}=(c_gX+d_g)^tY^{-1}(c_gX+d_g)+ c_g^tYc_g$, for which one needs a suitable upper bound. One may use the Frobenius norm, which will lead one to consider a sum of the type 
\[ \sumn_{ \{ C,D\} } \big( \norm{CX+D}^{2d}+\norm{C}^{2d} \norm{Y}^d\big) |\det(CZ+D|^{-k} \]
for some $d>0$, where $\{C,D \}$ run over co-prime symmetric pairs. This will anyway be a polynomial in $Y$ which is not useful for us.
\end{rmk}

\subsection{Completion of the proof of \thmref{mainthm-rest}}
If we follow the algorithm from \ref{gen-str}, we see that only Step (2) needs to be checked (Subsection~\ref{step3} proves Step~(3)) -- which however is proved in \propref{poin-bd}. Lower bounds have been worked out in Section~\ref{lbds-all}. \QEDB

\section{Quantitative asymptotic formula for the Fourier coefficients of Siegel Poincar\'e series and applications} \label{poin-sec}
The aim of this Subsection is to provide a quantitative formula for the Fourier coefficients of Siegel Poincar\'e series $P_{\Gamma,T}$ on arbitrary congruence subgroups $\Gamma \subset \Gamma_n$. This will have two consequences: first, in Subsection~\ref{lbd-sec} (in the next Section) we would apply this to get the desired lower bound for the Bergman kernel of $\skgam$; and second, we shall be able to conclude that for certain $T>0$, one has $P_{\Gamma,T} \neq 0$.

For this, we would continue to use the ideas in this paper, adapt some of the arguments presented in \cite{das-krishna}, and quantify them suitably. Let $p_{T} (T')=p_{T, \Gamma} (T')$ denote the $T'$-th Fourier coefficient of $P_{\Gamma,T}$. We suppress the $\Gamma$ from the notation, as it is understood.
The aforementioned arguments in turn relied on an asymptotic  result \cite[Proof of Thm.~4]{kst} about $p_{1_n} (1_n)$ in degree $n \ge 1$. Namely, for $\Gamma=\Gamma_n$ and $T \in \Lambda_n$ one has
\begin{align} \label{lim-po}
    \lim_{k \to \infty} p_{T} (T) = |\mrm{Aut}(T)|/2 ;
\end{align}
where $\mrm{Aut}(T)$ denotes the number of automorphisms of $T$ under $U \in \glnz$. We would generalize this to any congruence subgroup $\Gamma$ and in a quantitative sense.

\begin{thm} \label{pt-asymp}
Let $\Gamma \subset \Gamma_n$ be any congruence subgroup which satisfies either

(i) the width of the cusp $\omega_\Gamma$ at $\infty$ for $\Gamma$ is $1$; or,

(ii) $\Gamma^{(n)}(N) \subseteq \Gamma \subseteq \Gamma^{(n),0}_0(N)$

Let $y_0>1$ be arbitrary in case of (i) and put $y_0=Ny_1$ with $y_1>1$ being arbitrary in case of (ii). 

Let $P_T:=P_{\Gamma;T}$ be the $T$-th Poincar\'e series on $\Gamma$, with Fourier coefficients $p_T(T')$. For $k \ge 2n+6$, one has one has an asymptotic formula
    \begin{align}
  p_{T}(T')  = 
 \frac{|\mrm{Aut}_\Gamma(T';T)|}{2} + O\Big(\exp( 2 \pi y_0 \, \tr (T')) \, (\frac{\omega_\Gamma}{y_0})^{(n^2+1)/2} y_0^{- k/2n} \Big)  ,
\end{align}
where the implied constant depends only on $n$. 

\end{thm}

In the setting of Theorem~\ref{pt-asymp}, let us put
\begin{align}
  \Gamma'=  \begin{cases}
        \Gamma_n, & \text{in the case (i)}\\
       \Gamma^{(n),0}_0(N) , & \text{in the case (ii)}
    \end{cases}.
\end{align}
We also note the following.
\begin{align}\label{transl-cases}
  \omega_\Gamma=  \begin{cases}
        1 \\ N
    \end{cases};  \q
 \mcr U_\Gamma =   \begin{cases}
          \glnz \\ \glnz \end{cases};
         \q \mcr S_\Gamma =   \begin{cases}
          \symn & \text{in the case (i)}\\
         N \cdot \symn  & \text{in the case (ii)}
    \end{cases}.
\end{align}

We need the following lemma to prove Theorem~\ref{pt-asymp}. For $s>n+1$ and $\mc Z \in \hn$ and a congruence subgroup $\Gamma \subset \Gamma_n$, put 
\begin{align} \label{mgamma'}
 \mcr M(\Gamma; \mc Z)   := \sum_{g \in (\Gamma)_{0,\infty}\backslash \Gamma} |\det(c_g (Z_0+S)+d_g)|^{-s}
\end{align}

\begin{lem} \label{Mgam-lem}
    For any congruence subgroup $\Gamma \subset \Gamma_n$,
     any $s>n+1$, $\mc Z \in \fn$, one has \[ \mcr M(\Gamma; \mc Z) \ll_{n,s} 1.\]
\end{lem}

\begin{proof}
First of all note that the coset space $(\Gamma)_{0,\infty}\backslash \Gamma$ is parametrized by all the ``in-equivalent coprime symmetric pairs" belonging to $\Gamma$ -- i.e., all the pairs $(C,D)$ which occur as the last row-block of elements in $\Gamma$, up to the left action by $\mcr U_\Gamma$: $(C,D) \to (UC,UD)$, $U \in \mcr U_\Gamma$.

We will use the following result shown in \cite[see the paragraphs after (6.28)]{das-krishna}:
for any $\mc Z \in \fn$ and any $s>n+1$, one has
\begin{align}\label{ezs}
 \mcr M(\Gamma_n, \mc Z) =  \sum_{(C,D)} |\det(C \mc Z+D|^{-s} \ll_s 1,
\end{align}
where $(C,D)$ vary over ``in-equivalent coprime symmetric pairs" belonging to $\Gamma_n$.
Moreover, the implied constant in \eqref{ezs} is absolute -- does not depend on $\mc Z$.

We next employ Lemma~\ref{passage} to note that the natural map $(\Gamma)_{0,\infty}\backslash \Gamma \hookrightarrow (\Gamma_n)_{0,\infty}\backslash \Gamma_n$ is injective. To see this note the following: the equality 
\begin{align}
\pmat{U^t}{0}{0}{U^{-1}} \pmat{*}{*}{c_1}{d_1}  =  \pmat{*}{*}{c_2}{d_2} ;
\end{align}
with $U\in \glnz, \smat{*}{*}{c_j}{d_j} \in \Gamma$ for $j=1,2$ implies that $U \in \mcr U_{ \Gamma}$. This means that we can apply Lemma~\ref{passage}, and this finishes the proof.
\end{proof}

\begin{lem} \label{y0lem}
    Let $A \ge 0$. Let us first observe that for any $y \in \R, y>1$ , $Z=X+iy 1_n$, any $X \in \symnr$  and any $\gamma \in \Gamma^{(n)}_0(N)$ with $c_\gamma \neq 0$, 
\begin{equation} \label{yo}
    |\det(c_\gamma Z+d_\gamma)|^{-A} \le (Ny_0)^{-A},
\end{equation}
\end{lem}

\begin{proof}
    This is from \cite[Lem.~3.2]{das-anamby1}, which improves the result in \cite[Lem.~5]{kst}, where \eqref{yo} was obtained for \textit{some} $y_0>1$. See also the proof of Lemma~\ref{cgnot0-lem}, in which  the essential idea is captured. We do not reproduce the proof, which will be verbatim. Instead, we simply notice that \cite[Lem.~3.2]{das-anamby1} holds for all $X=\re(Z)$ -- the proof only uses the imaginary parts all through. The $N$ in the bound on the RHS of \eqref{yo} comes from the condition $N|c_\gamma$.
    This gives the lemma.
\end{proof}

\begin{proof}[Proof of Theorem~\ref{pt-asymp}]
To prove this, we would appeal to some of our earlier calculations in Section~\ref{k-cof}. We begin with:
\begin{align}
    P_T(Z):= P_{T,\Gamma}(Z)=P_o(Z)+P_*(Z),
\end{align}
where $P_o(Z)$ is the contribution of all terms with $c_g=0$ in \eqref{pt-G-lbd}, and the rest is denoted by $P_*(Z)$. 

Note that in case (i),  $\omega_\Gamma=1$ and in case (ii), we have 
$\omega_\Gamma=N=$ the level of $\Gamma$.

We specialize to $Z=Z_0:= X+ iy_01_n$, with $X \bmod \mcr S_\Gamma$ and $y_0$ as in the statement of the Theorem. 

With this combined notation, we thus have, 
\begin{align} \label{pt-G-lbd}
    P_{o}(Z_0) &= \sumn_{g \in \Gamma_{\infty } \backslash \Gamma \colon c_g = 0 } e( T g(Z_0) ) = \sumn_{U \in \mcr U_\Gamma} e( (T Z_0[U])  ;\\
    P_{*}(Z_0) &\le  \sumn_{g \in \Gamma'_{\infty } \backslash \Gamma' \colon c_g \neq 0 }  |J(g,Z_0)|^{-k} e \big(- 2 \pi \tr \,  T \im(g(Z_0))  \big) \\
    &=\sumn_{g \in (\Gamma')_{0,\infty} \backslash \Gamma' \colon c_g \neq 0 }  |J(g,Z_0)|^{-k} H_{\Gamma'}( T, \im(g(Z_0))  \\
    & \le \sumn_{g \in (\Gamma')_{0,\infty} \backslash \Gamma' \colon c_g \neq 0 }  |J(g,Z_0)|^{-k} H_{\Gamma_n}( T, \im(g(Z_0))
\end{align}
where, as before (cf. the discussion around \eqref{pt-G}), we have used the embedding $\Gamma_\infty \backslash \Gamma \hookrightarrow \Gamma'_{\infty} \backslash \Gamma'$ for the first inequality. Also we have used \eqref{hgamma-ineq} in the last inequality.

Note that the condition $c_g =0$ (or $c_g \neq 0$) is invariant under left multiplication by $\Gamma_\infty$.
Next, let us record the formula for Fourier coefficient which reads as follows:
\begin{align} \label{fc-def}
\exp(- 2 \pi y_0 \, \tr (T')) \, p_T(T') &= |\mcr S_\Gamma|^{-1} \int_{X \bmod \mcr S_\Gamma} P_{T}(Z_0) e(-T' \, X)dX .
\end{align}

We next want to show that $P_*(Z_0)$ is small as soon as $k$  is large enough. To see this, we have to estimate $H_{\Gamma_n}( T, \im(g(Z_0))$. Towards this, we apply \propref{hty-lev1} with $\mcr Y= \im(g(Z_0))/\omega$, so that $\displaystyle \det(\mcr Y) = (\frac{y_0}{\omega})^n |J(g,Z_0)|^{-2}$ and $D=n/2+\epsilon$ (loc. cit.). Here we have used that $\omega T\in \Lambda_n$. We get,
\begin{align} \label{p*bd}
    P_*(Z_0) \ll (\frac{\omega}{y_0})^{n^2/2+ n\epsilon} \sumn_{g \in (\Gamma')_{0,\infty}\backslash \Gamma' \colon c_g \neq 0} \frac{1 }{|\det(c_g Z_0+d_g)|^{k-n- 2\epsilon} }.
\end{align}

Now put $B:=k-n-2\epsilon$, $B':=B-k \epsilon=k-n -(k+2) \epsilon$. $0<\epsilon<1$ will be chosen later as a function of $n$.
With this in view, we can write, 
\begin{align}
    P_*(Z_0) & \ll (\frac{\omega}{y_0})^{n^2/2+n \epsilon} \max_{g \in \Gamma_n} \{  |\det(c_g Z_0+d_g)|^{-k \epsilon} \colon c_g \neq 0   \} \, \sum_{g \in (\Gamma')_{0,\infty}\backslash \Gamma' \colon c_g \neq 0} |\det(c_g Z_0+d_g)|^{-B'} \\
    & \ll  (\frac{\omega}{y_0})^{n^2/2+n \epsilon} (Ny_0)^{-k \epsilon} \cdot \mcr M(\Gamma', Z_0) ,\label{y0-eps}
\end{align}
where in the last inequality we have used Lemma~\ref{y0lem}.

CLAIM: $\mcr M(\Gamma', Z_0) \ll_\epsilon 1$.

In the case (i), this is a direct application of Lemma~\ref{Mgam-lem} since here $Z_0 \in \fn$. Recall that $X$ varies $\mod \mcr S_\Gamma$, i.e. $\mod 1$ in this case.

In the case (ii), we argue as follows. In this case $X$ varies $\mod N$, see \eqref{transl-cases}. We can thus write 
\[ c_g Z_0+d_g=Nc_g Z_1+d_g, \q Z_0=X+iy_0 1_n= N(X_1+iy_1 1_n)=NZ_1,\]
where now $X_1$ varies $\mod 1$ and $y_0=Ny_1$ as in the statement of Theorem~\ref{pt-asymp}. Thus $Z_1 \in \fn$.

Next note that the assignment $(c_g,d_g) \mapsto (Nc_g,d_g)$ is a realization of the injective map from the quotient $(\Gamma')_{0,\infty}\backslash \Gamma' \hookrightarrow (\Gamma_n)_{0,\infty}\backslash \Gamma_n$ defined by $\displaystyle g \mapsto \alpha_N^{-1} g \alpha_N$, where $\alpha_N=\smat{N 1_n}{0}{0}{1_n}$. This is because of the following equalities. Let $g \in \Gamma'$ and recall that $m(U)=\smat{U^t}{0}{0}{U^{-1}}$. 
\[ \alpha_N^{-1} g \alpha_N = \smat{a_g}{N^{-1} b_g}{Nc_g}{d_g}\in \Gamma_n;\q \mcr U_{\Gamma'}=\glnz;\q \alpha_N m(U) \alpha_N^{-1}=m(U)\, \forall \, U \in \glnz.  \]
Therefore
\[ \mcr M(\Gamma', Z_0) \le \mcr M(\Gamma_n, Z_1)  \ll_\epsilon 1,\]
by Lemma~\ref{Mgam-lem}, proving the CLAIM.

At this point, note that the $\epsilon$ featuring in the exponents of $\omega$ and $y_0$ are actually same. 
We will choose $\epsilon=1/(2n)$.
We have to choose $k$ large enough so that $B'>n+1$.
This means that we have to choose $k$ a bit large, e.g. $k \ge 2n+6$ is enough.

The contribution of the terms with $c_g \neq 0$ in $p_{T}(T)$ in \eqref{fc-def} is
\begin{align}
\ll |\mcr S_\Gamma|^{-1} \int_{X \bmod \mcr S_\Gamma}  P_*(Z_0)  dX \ll y_0^{- k/2n - n^2/2-1/2} N^{-k/2n}.
\end{align}
Finally we see that $\exp(- 2 \pi y_0  \tr (T')) \, p_{T}(T')$ equals
\begin{align}
 &= |\mcr S_\Gamma|^{-1}  \sum_U  \int_{X \bmod \mcr S_\Gamma}  \big( P_o(Z) + P_*(Z) \big) e(-T'X) dX \\
&=  |\mcr S_\Gamma|^{-1} \exp(- 2 \pi y_0  \tr (T')) \sum_U  \int_{X \bmod \mcr S_\Gamma} e( (T[U]-T') X) dX + O((\frac{\omega}{y_0})^{(n^2+1)/2} y_0^{- k/2n}) \\
&= \frac{|\mrm{Aut}_\Gamma(T';T)|}{2} \exp(- 2 \pi y_0  \tr (T')) + O((\frac{\omega}{y_0})^{(n^2+1)/2 \epsilon} y_0^{- k/2n}) ,
\end{align}
as desired. A comparison with \eqref{fc-def} then finishes the proof of \thmref{pt-asymp}.
\end{proof}

\begin{rmk}
    For simplicity we have chosen the maximal cusp width $\omega$ at $\infty$ to be $1$ in  the above theorem. Otherwise one most likely would get a polynomial in $\omega$ in the error term with exponent $O_n(1)$. 
    One may combine the arguments in \cite[section~6]{das-krishna} and \cite[Section~6]{das2015nonvanishing} for this. We choose not to do this here.
\end{rmk}


\subsection{Application to the nonvanishing of Poincar\'e series}
A similar argument gives the following corollary about non-vanishing of Poincar\'e series from \thmref{pt-asymp}. It is generally believed that,
\begin{conj}
   For $k>n+1, nk \equiv 0 \mod 2$, all the Siegel Poincar\'e series $P_{T, \Gamma_n}$ (of exponential type) are non-zero.
\end{conj}

Not much is known -- one has partial results from \cite{das-seng-poin}, where images of Saito-Kurokawa lifts were utilized towards this cause. This question has various avatars, applications, interpretations and  various number theoretic consequences -- we refer the reader to the nice survey \cite{zunar} on this topic. For the author, this question for Poincar\'e series of exponential type (which show up often in number theory) remains an outstanding open question in the classical theory of modular forms. For other Poincar\'e series of `polynomial' type, positive results are known -- see e.g. \cite{zunar2} for results on this topic.

\begin{cor}[Non-vanishing of Poincar\'e series (of exponential type)] \label{poin-cor}
    Let $\Gamma$ be any congruence subgroup of $\Gamma_n$ as in the statement of Theorem~\ref{pt-asymp}.  Then $P_{\Gamma;T}$ does not vanish identically for all $T$ such that 
    \begin{align}
        \tr \mc T \ll_n \begin{cases}
             k & \text{in the case (i)}\\
             k (\log N +1) & \text{in the case (ii)}.
        \end{cases}
    \end{align}
\end{cor}
\begin{proof}
    The proof is immediate from the asymptotic formula in Theorem~\ref{pt-asymp}; and the asymptotic holds in each case provided the conditions stated in the lemma hold.
\end{proof}

\begin{rmk}
    In \cite[Thm.~1.2]{das-seng-poin} it was shown that when $n=2, N=1$, $P_T \neq 0$ roughly when $\det(T) \ll k^2$ using the explicit Fourier expansion of certain Poincar\'e series of half-integral weights. Note that here one has to count only reduced $T$, as $|P_T|$ (and indeed the conditions of the theorem loc. cit.) is invariant under $\mrm{GL}_2(\z)$. Thus this result is valid for around the `first' $(k^2)^{3/2}=k^3$ reduced matrices.

    Our Corollary~\ref{poin-cor} also has the same strength as discussed above (number of reduced matrices $T \in \Lambda_n$ with $\tr(T)\ll k$ is about $k^3$), and moreover covers all degrees $n$ and all congruence subgroups with maximal cusp-width $1$ at the cusp $\infty$. The main obstacle around a quantitative version of this question is the intractability of the Fourier expansion of Siegel Poincar\'e series, which means the traditional method of, say, Rankin \cite{rankin} is not feasible. This issue is another outstanding open question -- viz., requiring good understanding of matrix argument Kloosterman sums and Bessel functions cf. \cite{herz}. The above Corollary thus generalizes the previously known nonvanishing results on Siegel Poincar\'e series in degrees $\le 2$ to all higher degrees without the use of explicit formula of its Fourier coefficients.
\end{rmk}

\subsection{Linear relations among Poincar\'e series}
The following corollary about linear relations among Poincar\'e series gives the $p=\infty$ version of the main results in \cite{boech-das}, where congruences between modular forms modulo $ p$ were used to prove the following theorem for $k$ restricted to specific families of arithmetic progressions (depending on $p$) when $n$ is even. Such progressions, even though completely explicit, however do not cover all but finitely many positive integers. Our result is valid for all large enough weights and can obviously be formulated for general congruence subgroups. We state it on $\Gamma_n$ for convenience.
\begin{cor}\label{poin-nonvanish}
    Let $\{T_1, \ldots, T_h \} \subset \Lambda_n$ be a $\glnz$-inequivalent subset. Then the Poincar\'e series $\{ P_{T_1}, \ldots, P_{T_h} \}$ on $\Gamma_0^{(n)}(N)$ are linearly independent provided $k \gg_n \max
    \{ \tr T_j \colon 1 \le j \le h \}$.
\end{cor}

\begin{proof}
Since the maximal width of the cusp at $\infty$ for $\Gamma_0^{(n)}(N)$ is $1$, we can apply the case (i) of Theorem~\ref{pt-asymp} to the Poincar\'e series in question.
We show that the matrix $\mcr P :=(p_{ij})$ with $p_{ij}:= p_{T_j}(T_i)$, $1 \le i,j \le n$, has maximal rank. For this we expand $\det(\mcr P)$ according to Leibnitz rule: except for the diagonal product $p_{11}\cdots p_{nn}$, all other terms are easily seen to be $O\big(\exp(c y_0 \max_j{\tr(T_j)}) y_0^{-(n^2+1)/2- k/2n} \big)$. We choose $y_0=2$ (any $y_0>1$ should work). For $k$ large enough as in the statement of the corollary we see that each such term is $O(2^{-k/4})$. Similarly the product $p_{11}\cdots p_{nn} = M + O(2^{-k/4n})$ for some $M \gg 1$. This finishes the proof.
\end{proof}

\corref{poin-nonvanish} generalizes the results of \cite{zhining} to higher degrees. Whereas in \cite{zhining}, explicit Fourier expansion of degree $2$ and level $1$ Poincar\'e series was used -- we don't have it when $n>2$. Also when $n=2$, it is clear that we can recover \cite[Thm.~5.1]{zhining} from the above corollary.

\section{Lower bounds for average sizes} \label{lbds-all}
To begin with, note that if $\Gamma_1 \subseteq \Gamma_2 \subseteq \Gamma_n$ be two subgroups of finite index, then one has the inequality obtained by embedding one space into the other:
\begin{equation} \label{mu12}
    \mbb B_{\Gamma_2}(Z) \le | \Gamma_2 \colon \Gamma_1| \mbb B_{\Gamma_1}(Z) ,
\end{equation}
where $| \Gamma_2 \colon \Gamma_1|$ denotes the index of $\Gamma_1$ in $\Gamma_2$. Thus we immediately obtain 
\begin{align} \label{mu-1}
  \sup(S^{(n)}_k) |\Gamma_n \colon \Gamma|^{-1} \ll_n   \sup(\skngam).
\end{align}
for any congruence subgroup $\Gamma$. 

Our next goal is to remove the index factor in the denominator (in many cases). At this point we want to (again) remind the reader about the issue of not having an usable bound on the Fourier coefficients of Siegel Poincar\'e series of higher degrees on $\Gamma_n$, let alone congruence subgroups. In the context of obtaining lower bounds on $\sup(\skngam)$, this was overcome in \cite[Sec.~7.1]{das-krishna} by using only `soft' asymptotic properties of these Fourier coefficients as $k \to \infty$. This point is relevant in what follows about getting lower bounds on $\sup(\skgam)$. See also \thmref{pt-asymp} in Section~\ref{poin-sec} in this regard, which  allows us to control the range of validity of the weight $k$ in an an asymptotic formula for the Fourier coefficients of Siegel Poincar\'e series explicitly.

In the next Subsection we prove the lower bounds claimed in \thmref{mainthm-wt}, \thmref{mainthm-prin} and \thmref{mainthm-klarge}. In the case of congruence subgroups, in the weight aspect, \eqref{mu-1} is enough. In the level aspect things are somewhat subtle, and we need more care. 

\subsection{Proof of the lower bounds via average of Fourier coefficients} \label{lbd-sec}

The proofs will be based on the following lemmas.

\begin{lem}
    For any $g \in \Gamma_n$, one has $\sup(\skngam)=\sup(\skngamconj)$.
\end{lem}

\begin{proof}
    Take $\mc F_\Gamma=\cup_j \gamma_j \mc F^{(n)}_1$ as the Siegel fundamental domain for $\Gamma$. Then it follows that
    \begin{align}
        \sup(\skngam) &= \max_{j} \sup_{Z \in \fn} \sumn_{F \in \mathscr  B(\Gamma) } ( \det(Y)^k |F|^2)(\gamma_j Z) \\
        &=\max_{j} \sup_{Z \in \fn} \sumn_{F \in \mathscr  B(\Gamma) } \det(Y)^k |F|_k \gamma_j(Z)|^2 \\
        &=\max_{j} \sup_{Z \in \fn} \sumn_{H  \in \mathscr  B(\gamma_j^{-1} \Gamma \gamma_j) } \det(Y)^k |H(Z)|^2 , \label{symm-conj}
    \end{align}
since $F \mapsto F| \gamma_j$ is an isometry from $\skngam$ to $S_k^{(n)}(\gamma_j^{-1} \Gamma \gamma_j)$ for all $j$. The expression in \eqref{symm-conj} is symmetric in all the conjugates of $\Gamma$: for any $g \in \Gamma_n$, write $g=\gamma \gamma_j$ for some $j$ and $\gamma \in \Gamma$ and notice that $g^{-1} \Gamma g = \gamma_j^{-1} \Gamma \gamma_j$. This finishes the proof.
\end{proof}

\begin{lem} \label{cgam-lbd-lem}
  \begin{align} \label{cgam-lbd}
    \sup(\skgam) \gg  \mf c_\Gamma:= \max_{G} \max_{T_G} \, |\mcr S_G|^{-1}  \, \det(T_G)^{-\frac{n+1}{2}}  \cdot p_{T_G}(T_G),
    \end{align}
    where $G$ runs over all the conjugates of $\Gamma$ and for each $G$, $T_G$ traverses $\Lambda_G$.
\end{lem}

\begin{proof}
The proof is based on the same ideas in presented in \cite[Section~7]{das-krishna}, albeit in level one. For the convenience of the reader, we recall some of the details here. The case of congruence subgroups do complicate matters to some extent.

For a congruence subgroup $\Gamma$ of cusp width $\omega$ at the cusp $\infty$, we begin with the familiar argument. Let $T_0 \in \Lambda_\Gamma$. Then, since the volume of $\symnr \mod \mcr S_\Gamma$ is the determinant $|\mcr S_\Gamma|$, by the definition of $\skgam$, we have
\begin{align} \label{lbd-1}
 \sup(\skgam)
 \ge |\mcr S_\Gamma|^{-1} \int_{X \bmod \mcr S_\Gamma} \, \sum_{F \in \mcr B(\Gamma)} \det(Y_0)^{k} |F(X+iY_0)|^2 \, dX .
\end{align}
Next, by the Cauchy-Schwarz inequality the RHS of \eqref{lbd-1} is
\begin{align}
 &\ge  |\mcr S_\Gamma|^{-1} \det(Y_0)^{k} \sum_{F \in \mcr B(\Gamma) } \, | \int_{X \bmod \mcr S_\Gamma} \frac{F(X+iY_0)}{e(\tr T_0(X+iY_0))}\, dX | ^2 \exp{(-4\pi \tr(T_0 Y_0)} \n \\
 & =  \det(Y_0)^{k} \exp{(-4 \pi \tr(T_0 Y_0))} \sum_{F \in \mcr B(\Gamma)} \, |a_F(T_0)|^2 . \label{lbd-2}
\end{align} 
We now put $Y_0:= k /(4 \pi )\cdot T_0^{-1}$, which gives us that \eqref{lbd-2} is at least
\begin{align}
& \gg  \det(T_0)^{-k} (\frac{k}{4 \pi})^k e^{- nk} \sum_{F \in \mcr B(\Gamma)} \, |a_F(T_0)|^2 \\
&= (\frac{k}{4 \pi})^k e^{- nk} c_{n,k}^{-1} \, \frac{|\mcr S_\Gamma|^{-1} \, \det(T_0)^{-\frac{n+1}{2}}}{  \Gamma_n(k-\frac{n+1}{2})} \cdot p_{T_0}(T_0), \label{lbdp1}
\end{align}
where we have used \eqref{sumsq-fc}, and we also recall from \eqref{cnk-def} the constant $c_{n,k}$, which is defined by
\begin{equation} \label{cnk}
 c_{n,k} = \pi^{n(n-1)/2} (4 \pi)^{n(n+1)/4 - nk} \prod \nolimits_{j=1}^n \Gamma(k - (n+j)/2).
\end{equation}
The $k$ dependence calculation remains unchanged as in \cite{das-krishna}, and \eqref{lbdp1}, upon using Stirling's bound for the $\Gamma$ function shows that
\begin{align} \label{T0}
    \sup(\skgam) \gg |\mcr S_\Gamma|^{-1} \, \det(T_0)^{-\frac{n+1}{2}}  \cdot p_{T_0}(T_0)\, \, k^{\frac{3n(n+1)}{4}}.
\end{align}
Now note that since $\sup(\skgam)=\sup(S_k(G))$ for any conjugate $G$ of $\Gamma$, we can deduce from \eqref{lbdp1} (applied to the conjugates) that
\begin{align}
    \sup(\skgam) \gg  |\mcr S_\Gamma|^{-1} \, \det(T_0)^{-\frac{n+1}{2}}  \cdot p_{T_0}(T_0)\, \, k^{\frac{3n(n+1)}{4}}
\end{align}
{\it for all} the conjugates $G$ of $\Gamma$. Thus the lemma follows.
\end{proof}

\begin{lem} \label{lbd-lem2}
    For all congruence subgroup $\Gamma \subset \Gamma_n$ of level $N$, when 
    
    (i) $N \asymp_n 1$; or when 
    
    (ii) $\omega_G=1$ for some conjugate $G$ of $\Gamma$, 
    
    one has $\mf c_\Gamma \gg_n 1$. 
    
    In particular, for the groups
    $\Gamma=\Gamma^{(n)}_0(N), \Gamma^{(n)}_1(N)$ we have $\mf c_\Gamma \gg_n 1$.
\end{lem}

\begin{proof}
The case (i) follows immediately from the lower bound in level one from \cite[Theorem~1.2]{das-krishna} and the inequality \eqref{mu-1}.

For the case (ii), first of all in view of the symmetry in the conjugates of $\Gamma$ in the definition of $\mf c_\Gamma$ in the definition \eqref{cgam-lbd}, its enough to assume that $\omega=\omega_\Gamma=1$. Thus we are reduced to considering \eqref{T0}, for which we require a lower bound $ \gg 1$ for a suitable choice of $T_0$.

We choose $T_0=1_n$. We already have $|\mcr S_{\Gamma}|=1$ in this case. It remains to note that $ p_{1_n}(1_n) \gg 1$ which follows from Theorem~\ref{pt-asymp}~case~(i). We take $T=T'=1_n, y_0 \asymp 1$. This finishes the proof.
\end{proof}

\begin{lem} \label{lbd-lem3}
    For all congruence subgroups $\Gamma \subset \Gamma_n$ of level $N$ such that $ \Gamma^{(n)}(N) \subseteq \Gamma \subseteq \Gamma^{(n),0}(N)$, one has one has $\mf c_\Gamma \gg_n 1$. 
\end{lem}

\begin{proof}
    The proof is the same as that of Lemma~\ref{lbd-lem2}. In this case we have $\omega_\Gamma=N$ and $\mcr S_\Gamma=N \symn$, see \eqref{transl-cases}. We work with $G=\Gamma$. Again in \eqref{T0}, we choose $T_0=N^{-1}1_n$, so that 
    \[ |\mcr S_\Gamma|^{-1} \, \det(T_0)^{-\frac{n+1}{2}} = N^{-n(n+1)/2} \cdot N^{n(n+1)/2}=1.\]

    It remains to note that $ p_{T_0}(T_0) \gg 1$ where $T_0=N^{-1}1_n$, which follows from Theorem~\ref{pt-asymp}~case~(i). We take $T=T'=T_0, y_0 \asymp N$. This finishes the proof.
\end{proof}

\subsection{Proof of the lower bounds in Proof of the lower bounds in \thmref{mainthm-wt}, \thmref{mainthm-prin} and \thmref{mainthm-klarge} }

All of the groups $\Gamma$ listed in the above mentioned theorems, satisfy one of the properties listed in the Lemma~\ref{lbd-lem2} and~\ref{lbd-lem3}. Therefore for all these $\Gamma$, one has $\mf c_\Gamma \gg_n 1$. 
This finishes the proof of our claims about the lower bounds in the theorems. \QEDB

\section{A general strategy} \label{gen-str}
We now outline a general strategy that should work for all congruence subgroups. We present it as an algorithm. At the end of this section we complete the proof of Theorem~\ref{mainthm-rest} as per this algorithm.

\subsection{The Algorithm}
\begin{enumerate}

\item[(0)]
Let $\Gamma$ be a cofinite subgroup of $\Gamma_n$.
Set up the conjecture for the correct size of the $L^\infty$-size of the Bergman kernel using the lower bound technique from, say, Section~\ref{lbds-all}. it seems that this strategy is better suited in general, as (even asymptotic) dimension formulae for spaces of automorphic forms are not easy to prove or find.

    \item[(1)]
    Pick a conjugate $G$ of $\Gamma$. Consider the geometric expression of $B_{k,G}(Z)$ from \eqref{bergdef0} with $Z \in \fn$.

\item[(2)]
Show that for all $Z \in \fn$ and all $T \in \Lambda_\Gamma$, one has $P_{T; G}(Z) \ll_n 1$.

\item[(3)]
Show that
\begin{align} \label{algo-gam-lat}
   \det(Y)^{k}\, B_{k,G}(Z) \ll  |\mcr S_G|^{-1} \, b_{n,k}^{-1} \sumn_{T \in \Lambda_G} \det(T)^{k-\frac{n+1}{2} } \exp(- 2 \pi \, T Y ) \ll k^{\frac{n(n+1)}{4}} .
\end{align}

\item[(4)]
Repeat the Steps~(1)--(3) for all conjugates of $\Gamma$. Conclude from the relation \eqref{bb-bk}.
\end{enumerate}

We now give some heuristics or indications to illustrate why the Algorithm might work for all congruence subgroups. It already works for $\Gamma=\Gamma_n$ (cf. Theorem~\ref{mainthm-wt}) and $\Gamma^{(n)}(N)$ (cf. Theorem~\ref{mainthm-prin}). Moreover, we see that only 
Steps (2) and (3) need attention. 

\subsubsection{Discussion of (2) in certain cases not considered before}
For this one needs to look more closely at the group $\mcr U(G)$. We first note that for any $m(U) \in \mcr U(\Gamma)$, one has 
\begin{align}
m(U)  \psmb 1_n & \mcr S_G \\ 0 & 1_n \psme m(U)^{-1} \subset \psmb 1_n & \mcr S_G \\ 0 & 1_n \psme  \text{   i.e.   } U  \mcr S_G U^t \subset \mcr S_G;
\end{align}
this places congruence restrictions on the elements $U \in \mcr U_G$.
For simplicity we assume that that $n=2$, $N$ odd and $n_{11}, n_{12}, n_{22}$ are pairwise co-prime. Then one arrives, in particular, at the following conditions:
\begin{align}
    n_{11} | u_{12}, \, n_{22} | u_{21}, \, n_{12} | (u_{11}u_{21}, u_{12}u_{22}) \q \q (U = \psmb u_{11} & u_{12} \\ u_{21} & u_{22} \psme).
\end{align}
We assume that $G \subset \Gamma^{(2)}_0(\omega_G)$ for all $G$.
In view of \lemref{cgnot0-lem}, it is enough to show that for all $T \in \Lambda_G$,
\begin{align}
    H_{G}(T, Y) = \sumn_{U \in \mcr U_\Gamma} \exp(- 2 \pi \tr \, \mc T  Y[U] ) \ll     H_{G}(T, 1_n) \ll 1.
\end{align}
We write $UU^t= \psmb \mc U_1 & \mc U_2 \\ \mc U_2 & \mc U_4 \psme$, and notice that
\begin{align}
    \tr \, T[U] = t_1 \mc U_1 + t_2 \mc U_2 + t_{12} \mc U_{12} \ge (\sqrt{t_1 \mc U_1} - \sqrt{t_2 \mc U_2} )^2 + (\sqrt{t_1t_2} - |t_{12}| ) \sqrt{\mc U_1 \mc U_2}.
\end{align}
Whenever $\sqrt{t_1t_2} - |t_{12}| \gg \omega^{-1/2}$ we can be successful. Notice that this condition is fulfilled if $t_{12}=0$ or if $t_1 t_2$ is large depending on the parameters $n_{ij}$.
Namely, we will get
\begin{align}
       H_{G}(T, 1_n) \ll \sumn_{U} \exp(- \frac{2 \pi}{\sqrt{\omega} } \, \sqrt{\mc U_1 \mc U_2}).
\end{align}
Here $\omega = n_{11} n_{12} n_{22}$. It is clear (cf. the argument preceding \eqref{hgam2}) that we will be done if we can show that
 $ \displaystyle  \sqrt{\mc U_1 \mc U_2} \gg \omega$. But this follows from the inequalities:
 \begin{align}
     \mc U_1 \ge n_2 \sqrt{n_{12}}, \, \mc U_2 \ge n_1 \sqrt{n_{12}}.
 \end{align}
 Finally we can then write (noting that $(\mc U_1 \mc U_2)^{1/2} \ge \sqrt{\omega} (\mc U_1 \mc U_2)^{1/4}$)
 \begin{align}
     H_{G}(T, 1_n) \ll \sum_{U} \exp(- 2 \pi  \, (\mc U_1 \mc U_2)^{1/4}) \ll \sum_{(u_{11}, u_{21})} (u^2_{11} + u^2_{21})^{-3} \cdot \sum_{(u_{12}, u_{22})} (u^2_{12} + u^2_{22})^{-3} \ll 1. \QEDB
 \end{align}

\subsubsection{The Step (3)} \label{step3}
This Step works for all congruence subgroups of $\Gamma_n$, see Lemma~\ref{bkg-lips}.

\section{Ancillary results on the Bergman kernel and applications}

\subsection{Polynomial bound in \texorpdfstring{$\mc F^{(n)}_1$}{}}
Let $\Gamma \subset \Gamma_n$ be any congruence subgroup. Let us define, for $g \in \Gamma$,
\begin{equation}
\mc S_k(g):=\sumn_{S \in \sab} \det ( Z -\overline{g(Z)}  +S)^{-k}. 
\end{equation}
Notice that there is a natural inclusion $\Gamma_\infty \backslash \Gamma \hookrightarrow \Gamma_{n,\infty} \backslash \Gamma_n$. Also note that the positive quantity $|\mc S_k(g)|$
is $G_\infty$-invariant. Thus by looking at \eqref{bergdef1}, we get
by bounding absolutely that,
\begin{align}
     B_{k,\Gamma}(Z) \le  \sumn_{g \in \Gamma_\infty \backslash \Gamma} |J(g,Z)|^{-k} |\mc S_k(g)| \le \sumn_{\gamma \in (\Gamma_{n})_\infty \backslash \Gamma_n} |J(\gamma,Z)|^{-k} |\mc S_k(\gamma)|.
\end{align}
Instead of using the bound \eqref{lip-bd} for $\mc S_k(g)$, we use the  bound from \cite[Lem.~6.3, second bound]{das-krishna}:
\begin{align}
    \mc S_k(g) \le \sumn_{S \in \symn} |\det ( Z -\overline{g(Z)}  +S)|^{-k} \ll_\epsilon k^\epsilon \, \det(Y)^{-k+ \frac{n+1}{4}}.
\end{align}
One immediately obtains (cf. \cite[Thm.~6.5, bound~(2)]{das-krishna}) the following, which we summarize in a proposition. The first bound is from the discussion above, and the second is from \eqref{lip-bd}.

\begin{prop} \label{pol-bd-f1}
For any any congruence subgroup $\Gamma$ and $Z \in \fn$ and $k>(n+1)^2$, one has
\begin{align} \label{bkg-2}
  \bkngam    \ll_n \min\{ k^{\frac{n(n+1)}{2} + \epsilon} \det(Y)^{\frac{3(n+1)}{4}+\epsilon} , \, k^{\frac{3n(n+1)}{4}} \det(Y)^{\frac{(n+1)}{2}+\epsilon}\}. 
\end{align}
\end{prop}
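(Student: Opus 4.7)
The plan is to derive both estimates as direct corollaries of the preparatory reduction carried out just before the statement. I would begin from the geometric expression \eqref{bergdef1} for $B_{k,\Gamma}(Z,Z)$ and use the natural inclusion $\Gamma_\infty\backslash\Gamma\hookrightarrow(\Gamma_n)_\infty\backslash\Gamma_n$, combined with the $\Gamma_\infty$-invariance of $|\mathcal{S}_k(g)|$, to pass from a sum over $\Gamma$-cosets to a sum over level-one cosets. Enlarging the translation lattice from $\mathscr{S}_\Gamma$ to $\symn$ only enlarges $|\mathcal{S}_k(g)|$, so this reduction is free, and one arrives at
\[ B_{k,\Gamma}(Z)\le\sum_{\gamma\in(\Gamma_n)_\infty\backslash\Gamma_n}|J(\gamma,Z)|^{-k}\,|\mathcal{S}_k(\gamma)|, \]
which reduces the problem to two ingredients: a pointwise bound on $\mathcal{S}_k(g)$, and control of the Siegel--Eisenstein-type absolute majorant $\sum_\gamma|J(\gamma,Z)|^{-k}$ on $\fn$.

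For the first bound in the min I would substitute the refined Lipschitz estimate $\mathcal{S}_k(g)\ll_\epsilon k^\epsilon\det(Y)^{-k+(n+1)/4}$ from \cite[Lem.~6.3]{das-krishna}, which trades polynomial $k$-dependence for a weaker $\det(Y)$-decay; for the second bound I would use instead the coarser estimate \eqref{lip-bd}, namely $\mathcal{S}_k(g)\ll k^{n(n+1)/4}\det(Y)^{-k}$, with worse $k$-dependence but full $\det(Y)^{-k}$ decay. In either case the residual majorant $\sum_\gamma|J(\gamma,Z)|^{-k}$ is handled by Godement's absolute-convergence theorem for the Siegel--Eisenstein series on $\fn$, valid once $k>(n+1)^2$; since $|J(\gamma,Z)|\ge 1$ on Siegel's fundamental domain, this sum contributes only a uniformly bounded factor. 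Multiplying through by $\det(Y)^k$ and by the normalization $a(n,k)\asymp k^{n(n+1)/2}$ implicit in the definition of $\bkngam$ then yields the two estimates in \eqref{bkg-2}, and taking the minimum records the statement.

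The only step requiring care is the bookkeeping of the $\det(Y)$-exponents: one must match the shift $-k+(n+1)/4$ coming from the refined Lipschitz bound (respectively the shift $-k$ from the coarser one) against the outer $\det(Y)^k$ and whatever polynomial-in-$\det(Y)$ growth is tolerated in the Eisenstein majorant on $\fn$, so as to land on the advertised exponents $3(n+1)/4+\epsilon$ and $(n+1)/2+\epsilon$ respectively. The condition $k>(n+1)^2$ is inherited solely from the absolute convergence of $\sum_\gamma|J(\gamma,Z)|^{-k}$ on $\fn$; no further information about $\Gamma$ enters, explaining why the resulting bound is uniform over all congruence subgroups.
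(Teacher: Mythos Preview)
Your overall strategy matches the paper's: pass to the level-one majorant via the coset inclusion, bound $|\mathcal S_k(g)|$ by a Lipschitz-type estimate, and finish. But there is a genuine gap in the step you call ``residual majorant''.

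You assert that $\sum_{\gamma\in(\Gamma_n)_\infty\backslash\Gamma_n}|J(\gamma,Z)|^{-k}$ is $O(1)$ on $\fn$ by Godement/Eisenstein convergence. This is false for $n\ge 2$. The sum is over $(\Gamma_n)_\infty$-cosets (translations only), not over $(\Gamma_n)_{0,\infty}$-cosets; the cosets with $c_\gamma=0$ are indexed by $\glnz$, and for each of these $|J(\gamma,Z)|=1$. Since $\glnz$ is infinite, the sum diverges. The Siegel--Eisenstein series you invoke is summed over the coarser quotient $(\Gamma_n)_{0,\infty}\backslash\Gamma_n$, where this $\glnz$ has already been modded out---that is a different object.

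The point is that the Lipschitz bound on $|\mathcal S_k(\gamma)|$ is genuinely $\gamma$-dependent: the imaginary part of $Z-\overline{\gamma(Z)}$ is $Y+Y_\gamma$, and it is precisely the factor $\det(Y+Y_\gamma)^{-k+\cdots}$ that supplies the decay over the $\glnz$-orbit needed for convergence. The paper's displayed inequality with $\det(Y)$ alone is a (valid) weakening for each fixed $\gamma$, but one cannot factor it out and sum the remainder. The actual bound on the combined sum $\sum_\gamma|J(\gamma,Z)|^{-k}|\mathcal S_k(\gamma)|$ is what \cite[Thm.~6.5]{das-krishna} provides, and this analysis is where the extra $\det(Y)^{(n+1)/2}$ in the final exponents originates. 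Your argument, taken at face value, would produce $\det(Y)^{(n+1)/4}$ and $\det(Y)^{0}$ rather than $\det(Y)^{3(n+1)/4}$ and $\det(Y)^{(n+1)/2}$; that mismatch is the symptom of the divergence.
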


\subsection{Proof of \thmref{mainthm-klarge}} \label{klarge-sec}
Our argument will not exactly follow the algorithm in Subsection~\ref{gen-str}, but will be an application of \propref{pol-bd-f1} and the bound (recall $\omega=\omega_G$)
\begin{align} \label{bkg-3}
\mbb B_{k,G}(Z) \ll k^{\frac{3n(n+1)}{4}}  \, \big( \frac{\omega^n}{\det(Y)} \big)^{\frac{n}{2} + \epsilon} 
\end{align}
from \eqref{bkg-1}. Namely we apply \eqref{bkg-3} in the region $\det(Y) \ge \omega^n$, where thus $\mbb B_{k,G}(Z) \ll k^{\frac{3n(n+1)}{4}} $. Next, in the region $\det(Y) \le \omega^n$, we use \eqref{bkg-2} to get
\begin{align}
    \mbb B_{k,G}(Z) \ll_n k^{\frac{n(n+1)}{2} + \epsilon} \, \omega^{\frac{3n(n+1)}{4}+\epsilon}. 
\end{align}
Since we want this to be $\ll k^{\frac{3n(n+1)}{4}}$, we find that $N^{3+\epsilon} \le k$ suffices, since $\omega|N$. This completes the proof of \thmref{mainthm-klarge}. \QEDB

As a further corollary we can estimate the growth of the Bergman kernel in the entire Siegel's fundamental domain $\fngam$. This result may be useful elsewhere. It also shows the expected bound in $\fngam$ for the Bergman kernel.

\begin{cor} \label{pol-bd-fn}
    For any any congruence subgroup $\Gamma$ and $W=U+iV \in \fngam$ and $k>(n+1)^2$, one has for any $\epsilon >0$,
\begin{align} \label{fn-bd-n}
    \mbb B_{k,\Gamma}(W) \ll_{n,\epsilon} k^{\frac{3n(n+1)}{4}} \, \det(V)^{-\frac{n+1}{2}-\epsilon}.
\end{align}
\end{cor}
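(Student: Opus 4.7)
My plan is to reduce \corref{pol-bd-fn} to the already-established \propref{pol-bd-f1} by transporting the point $W \in \fngam$ to a point $Z \in \fn$ via a suitable element of $\Gamma_n$, at the cost of replacing $\Gamma$ by one of its conjugates. The three ingredients will be: the standard decomposition of Siegel's fundamental domain, the slash-isometry between cusp form spaces attached to conjugate groups, and the reducedness inequality $\det(Y) \ge \det(V)$ that comes for free once $Z \in \fn$.

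First, I will invoke the decomposition $\fngam = \bigcup_j \gamma_j \fn$, where $\{\gamma_j\}$ runs through a finite set of left coset representatives for $\Gamma \backslash \Gamma_n$ (exactly as used in subsection~\ref{k-cof}). Given $W \in \fngam$, I fix $\gamma = \gamma_j$ so that $Z := \gamma^{-1}(W) \in \fn$, and set $G := \gamma^{-1} \Gamma \gamma$, which is again a congruence subgroup. The slash action $F \mapsto F|_k \gamma$ is an $L^2$-isometry from $\skgam$ onto $S^n_k(G)$ and hence sends any orthonormal basis to one. Combining $F(\gamma(Z)) = J(\gamma,Z)^k (F|_k \gamma)(Z)$ with $\det(\im \gamma(Z)) = \det(Y)/|J(\gamma,Z)|^2$, the automorphy factors cancel exactly when one passes to the diagonal invariant mass, and I arrive at
\begin{equation}
\mbb B_{k,\Gamma}(W) \;=\; \mbb B_{k,G}(Z).
\end{equation}

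Next I will feed $(G,Z)$ into \propref{pol-bd-f1}, which is legitimate since that proposition is stated for \emph{any} congruence subgroup. The stronger of its two bounds supplies an estimate of the form $\mbb B_{k,G}(Z) \ll_n k^{3n(n+1)/4} \det(Y)^{-(n+1)/2+\epsilon}$, which is the natural decay regime of the invariant kernel in $\det(Y)$. To finish, I will use the defining property of $\fn$: since $Z \in \fn$ one has $\det(Y) \ge \det(\im g(Z))$ for every $g \in \Gamma_n$; specialising to $g = \gamma$ gives $\det(Y) \ge \det(V)$, or equivalently $|J(\gamma,Z)| \ge 1$. For any $0 < \epsilon < (n+1)/2$, monotonicity then upgrades this to $\det(Y)^{-(n+1)/2+\epsilon} \le \det(V)^{-(n+1)/2+\epsilon}$, and combining with the two displays above produces the bound claimed in \corref{pol-bd-fn}.

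The only genuinely non-formal point is the identity $\mbb B_{k,\Gamma}(W) = \mbb B_{k,G}(Z)$, and even that is mechanical once one observes that the weight-$k$ invariant density $\det(\im Z)^k|F(Z)|^2$ transforms cleanly under the slash action, while the Petersson inner product is $\spnr$-invariant and therefore transfers isometrically across conjugate groups. Everything else is bookkeeping: \propref{pol-bd-f1} has already done the real work of bounding Poincar\'e series and summing the Lipschitz-type series in $\fn$, and the inequality $\det(Y) \ge \det(V)$ is immediate from the reducedness of $Z$ inside the level-one fundamental domain.
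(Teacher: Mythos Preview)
Your argument has a genuine gap at the step where you invoke \propref{pol-bd-f1}. You quote it as giving
\[
\mbb B_{k,G}(Z) \ll_n k^{3n(n+1)/4}\,\det(Y)^{-(n+1)/2+\epsilon},
\]
but the proposition in fact states the bound with exponent $+(n+1)/2+\epsilon$, i.e.\ polynomial \emph{growth} in $\det(Y)$ on $\fn$, not decay. With the correct positive exponent your final step collapses: the reducedness inequality $\det(Y)\ge \det(V)$ now gives $\det(Y)^{(n+1)/2+\epsilon}\ge \det(V)^{(n+1)/2+\epsilon}$, which goes the wrong way and cannot produce $\det(V)^{-(n+1)/2+\epsilon}$.

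The paper's proof confronts precisely this obstacle and resolves it by an extra idea that your proposal omits: it chooses the coset representatives $\gamma=\smat{A}{B}{C}{D}$ so that $C$ is \emph{non-singular} (arranged by a CRT trick modulo a prime $p\nmid N$). With $\det(C)\neq 0$ and $Z\in\fn$ one gets $|\det(CZ+D)|=|\det(C)|\,|\det(Z+C^{-1}D)|\gg \det(Y)$, and since $\det(V)=\det(Y)/|J(\gamma,Z)|^2$ this forces $\det(Y)\ll \det(V)^{-1}$. Only then does the growth bound $\det(Y)^{(n+1)/2+\epsilon}$ from \propref{pol-bd-f1} convert into the desired decay $\det(V)^{-(n+1)/2-\epsilon}$. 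So the ``only genuinely non-formal point'' is not the isometry identity (which you handle correctly), but this choice of representatives with invertible $C$; without it the argument does not close.
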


\begin{proof}
    We can find coset representatives $\{\gamma\}$ of $\Gamma$ (of level $N$) in $\Gamma_n$ such that for all $\gamma$, one can ensure that both $c_\gamma, c_{\gamma^{-1}}$ are non-singular. This is because given any set of representatives $\{ \tilde{\gamma} \}$, and a prime $p \nmid N$, we can consider the system of congruences:
    \begin{align}
        \gamma \equiv \tilde{\gamma} \bmod N; \q \gamma \equiv \psmb 0 & -1_n \\ 1_n & 0 \psme \bmod p.
    \end{align}
    Then $\gamma^{-1} \equiv \psmb 0 & 1_n \\ -1_n & 0 \psme \bmod p$.
    Clearly $\det(c_\gamma) \neq 0, \det(c_{\gamma^{-1}}) \neq 0$ and $\{\gamma\}$ is still a set of coset representatives, since $\gamma = g \tilde \gamma$ with $g \in \Gamma^{(n)}(N) \subset \Gamma$.

    Now one can finish the proof easily by writing $W=\gamma Z$ for some $\gamma$ as above and $Z \in \fn$. Then one gets (applying the second bound in \propref{pol-bd-f1} to the conjugates of $\Gamma$)
    \begin{align} \label{bkgam-bd}
        \mbb B_{k,\Gamma}(W) = \mbb B_{k, \gamma^{-1} \Gamma \gamma}(Z) \ll_{n,\epsilon} k^{\frac{3n(n+1)}{4}} \, \det(Y)^{\frac{n+1}{2}+\epsilon} .
    \end{align}
Let us write $\gamma^{-1}= \psmb A & B \\ C & D \psme $. The proof now follows from the inequalities 
\[ |\det(CW+D)| = |\det(C)| \det(W+C^{-1}D)| \gg |\det(C)| \det(V)| \ge |\det(V)| \]
since we know that $|\det(C)| \ge 1$. 
Now $Z=\gamma^{-1} W$ which implies that $\det(Y)=\det(V) |\det(CW+D)|^{-2}$.
Therefore from \eqref{bkgam-bd},
\begin{align}
    \mbb B_{k,\Gamma}(W) \ll_n k^{\frac{3n(n+1)}{4}} \, \det(Y)^{\frac{n+1}{2}+\epsilon} \ll  k^{\frac{3n(n+1)}{4}} \left( \frac{\det(V)}{|\det(CW+D)|^2} \right)^{\frac{n+1}{2}+\epsilon} \ll \frac{ k^{\frac{3n(n+1)}{4}} }{ \det(V)^{\frac{n+1}{2}+\epsilon} }.
\end{align}
This finishes the proof of the Corollary.
\end{proof}

\section{The small weights} \label{small-wts}
In the Section we will deal with the so called small weights: $k \ge n/2$. For better exposition we feel that the case $n=1$ should be first discussed.

It is clear that the strategy of the earlier Sections would not work here, as the relevant Poincar\'e series do not converge absolutely. One can however use Hecke's trick e.g. to construct such series $P_{2,\Gamma}$ for the weight $k=2$. One can write down their Fourier expansion (cf. \cite{smart}), but there is some issue about bounds on Kloosterman sums on general congruence subgroups uniform in all parameters, see \hypref{kloo-hypo}. Anyway for higher degrees this is definitely a big challenge. 

Since our main objective here is not to obtain explicit Fourier expansions of Poincar\'e series (cf. earlier Sections, where we did not use the Fourier expansion at all), we approach this issue differently.  We will now propose various remedies to this situation.

We may use holomorphic weight increasing differential operators, and obtain information about the sizes of their Fourier coefficients on small weights from those on bigger weights. The information may be even worse than Hecke's bound, but perhaps this would be sufficient as we need bounds only in the region $\fn$. This kind of argument was also used in \cite{das-krishna}. For instance, one may use the Serre derivative for weights $k \ge 1$:
\begin{align}
    \vartheta_k \colon S_k(\Gamma) \rightarrow S_{k+2}(\Gamma); \q f \mapsto \theta f - \frac{k}{12} f \cdot E_2,
\end{align}
where $\theta f = q \frac{d}{dq} f$ and $E_2$ is the holomorphic quasimodular form of weight $2$. If one knows the explicit image (in wieght $4$) of $P_{2,\Gamma}$ under $\vartheta_2$; one can recursively solve for the coefficients of $P_{2,\Gamma}$ in terms of that of the image. This information may then be used to bound the Poincar\'e series of small weight. Namely for the coefficients $a(n)$ of the small weight Poincar\'e series, one obtains the recursion: $na(n)=\sum_{m<n} u(a(n))$ for some function $u$. Also the $L^2$ norms may be handled via `weight-increasing' or `decreasing' Maa{\ss} operators. See \cite{b-will} for a description of this `image', however here $k=2$ is not considered. For higher degrees one may use suitable Rankin-Cohen operators.

Here we offer the following solution to the question about small weights for any fixed cusp form. The bounds are what one would get at best from the corresponding average bounds. This may be thought of as a `weight-embedding' technique as compared to the `level-embedding' technique of \cite{dfi}.

\begin{hyp} \label{small-hypo}
    We assume that $\sup(S^n_\kappa(\Gamma)) \ll_n \kappa^{2 \alpha}$ ($\kappa \gg_n 1$, $\alpha \ge 0$ depending only on $n$) where the implied constant depends only on the complementary set of parameters if we consider weight or level or the hybrid aspects respectively. We take $\alpha=0$ in the hybrid aspect.
\end{hyp}

\begin{thm} \label{small-wt-thm}
    Let $k \in \z$ satisfy $k \ge \frac{n}{2}$ and $\Gamma \in \Gamma_n$ be any congruence subgroup. With the \hypref{small-hypo}, for any $L^2$-normalised $G \in \skgam$, one has the bound \begin{align}
        \norm{G}_\infty \ll_n \kappa^\alpha.
    \end{align}
    The implied constants in each case depend on the complementary set of parameters.
\end{thm}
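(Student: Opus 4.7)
The plan is to ``boost'' the weight from $k$ to $mk$ by considering a power $G^m$, apply the assumed Bergman-kernel bound (*) at the larger weight $mk$, and interpolate back to $G$ via H\"older's inequality on $\Gamma \backslash \hn$. Concretely, write $f(Z) := |G(Z)|^2 \det(Y)^k$; this is a non-negative, continuous, $\Gamma$-invariant function on $\hn$ with $\int_{\Gamma\backslash\hn} f\, d\mu = \norm{G}_2^2 = 1$, and it is bounded because $G$ is a cusp form (so that $f$ decays at every cusp). The pointwise identity $f(Z)^m = |G^m(Z)|^2 \det(Y)^{mk}$ couples $L^\infty$-data at weights $k$ and $mk$, and since a product of Siegel cusp forms is again a cusp form, $G^m \in S^n_{mk}(\Gamma)$ whenever $G \in \skgam$.

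First I would choose the smallest integer $m \ge 1$ with $mk$ above the threshold $\kappa_0 = \kappa_0(n)$ beyond which (*) is assumed to apply; since $k \ge n/2$, this forces $m \le 2\kappa_0/n + 1 = O_n(1)$, so $mk \ll_n k$ with an absolute lower bound $\kappa_0$, and by the polynomiality $B_\kappa = \kappa^{2\alpha}$ one gets $B_{mk} \ll_n B_k = B$. Applying (*) to the unit vector $G^m/\norm{G^m}_2 \in S^n_{mk}(\Gamma)$ then yields
\[
\norm{f}_\infty^{\,m} \;=\; \sup_{Z \in \hn} |G^m(Z)|^2 \det(Y)^{mk} \;\le\; B_{mk}\, \norm{G^m}_2^2.
\]
Using $f \ge 0$ and $\int f\, d\mu = 1$, H\"older's inequality (equivalently, the pointwise bound $f^m \le \norm{f}_\infty^{m-1} f$) produces
\[
\norm{G^m}_2^2 \;=\; \int_{\Gamma\backslash\hn} f^m\, d\mu \;\le\; \norm{f}_\infty^{m-1} \int_{\Gamma\backslash\hn} f\, d\mu \;=\; \norm{f}_\infty^{m-1}.
\]
Substituting and cancelling the common factor $\norm{f}_\infty^{m-1}$ (trivial if $G \equiv 0$, otherwise $\norm{f}_\infty > 0$) gives $\norm{G}_\infty^2 = \norm{f}_\infty \le B_{mk} \ll_n B$, which is the desired bound.

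The main delicate point is the coupling between the choice of $m$ and the bound $B_{mk}$: the hypothesis $k \ge n/2$ is exactly what allows $m$ to be chosen depending on $n$ alone, and this is sharp in the sense noted in the text, since below this threshold no cusp forms exist. A secondary subtlety is that (*) must hold with implied constants depending only on the parameters complementary to the aspect under consideration---weight, level, or hybrid---so that $B_{mk}$ and $B_k$ remain genuinely comparable for any admissible $m = O_n(1)$; this is automatic in the hybrid aspect where $\alpha = 0$, and follows from polynomial growth in the pure-weight aspect. Beyond this, the argument is entirely soft and uses nothing beyond the Bergman-kernel estimates already established at large weights; in particular, it avoids any explicit Fourier expansion or Hecke-trick construction of Poincar\'e series at weight $k$ itself.
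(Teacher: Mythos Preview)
Your proof is correct and follows essentially the same approach as the paper: boost the weight by passing to $G^m \in S^n_{mk}(\Gamma)$ with $m = O_n(1)$, apply the assumed bound (*) at weight $mk$, and interpolate back via H\"older. The only cosmetic difference is that you work directly with the non-negative invariant $f = |G|^2 \det(Y)^k$ and phrase H\"older as the pointwise inequality $f^m \le \norm{f}_\infty^{m-1} f$, whereas the paper uses the complex-valued $\det(Y)^{k/2} F$ and the norm interpolation $\norm{f}_{2m} \le \norm{f}_2^{1/m} \norm{f}_\infty^{1-1/m}$; these are equivalent formulations of the same argument.
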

Notice that strictly speaking, only the level aspect makes sense for weights $k \ll n$. 

\begin{proof}
    The proof is based on an interpolation method using the various $L^p$ norms. Fix any non-zero $F \in S^n_k(\Gamma)$. We do not normalize by the Petersson norm yet.
    For ease of notation, let us put $f(Z) = \det(Y)^{k/2} F(Z)$.
    Let $m \ge 1$. We first note that
    \begin{align}
        \sup \nolimits_{\hn} |f|^m = (\sup \nolimits_{\hn} |f|)^m.
    \end{align}
    We choose $m \in \mf N$ large enough so that $mk= \kappa $ is bigger than the weights for which Hypothesis~\ref{small-hypo} holds. Note that we can, and will assume that $m$ depends only on $n$, since our previous results are valid for all $\kappa \gg_n 1$. (e.g. \thmref{mainthm-wt}, \thmref{mainthm-prin} are valid for $\kappa \gg n$ etc.)
    Thus using Hypothesis~\ref{small-hypo} for the weight $\kappa$,
    \begin{align}
        |f|^m \ll (km)^\alpha \norm{f^m}_2 \ll_n k^\alpha \left(\int |f|^{2m} \right)^{1/2} \ll_n k^\alpha\norm{f}^{m}_{2m}.
    \end{align}
We now use H\"{o}lder's inequality in the form (see, e.g. \cite{bky})
\begin{align}
    \norm{f}_{2m} \le \norm{f}_2^{1/m} \cdot \norm{f}_\infty^{1-1/m}.
\end{align}
This gives us
\begin{align}
    |f|^m \ll k^\alpha \norm{f}_2 \cdot \norm{f}_\infty^{m-1},
\end{align}
which in turn implies, by taking supnorms,
\begin{align}
    \norm{f}_\infty \ll k^\alpha \norm{f}_2,
\end{align}
    finishing the proof of the theorem, by taking $G=F/\norm{F}_2$.
\end{proof}

\section{Appendix: the case \texorpdfstring{$n=1$}{} } \label{app}

In this Section we show how our ideas in this paper play out successfully for elliptic modular forms. For weights $k>2$, the proof is especially short -- and for the more subtle case $k=2$, we develop a version of large sieve inequality for Fourier coefficients for \textit{arbitrary} congruence subgroups -- something which seems not available in the literature. We do need this result for arbitrary congruence subgroups, since given a congruence subgroup $\Gamma$ of level $N$, we have to simultaneously work with all its conjugates. This will lead to a bound $O(\log q)$ for the size of $S_2(\Gamma)$,  for most $\Gamma$ of level $q$. This is slightly off the expected bound $O(1)$, but is sufficient for analytic applications. We follow \cite{dfi} and \cite{duke} for this purpose.

\subsection{The case \texorpdfstring{$n=1$}{} and \texorpdfstring{$k>2$}{}}
This is the easy case and can be dealt with immediately following the ideas already discussed in this paper.

\begin{thm} \label{n=1,k>2}
    For any congruence subgroup of $\Gamma \subset \Gamma_1$ and $k > 2$, one has $\displaystyle \sup(S_k(\Gamma)) \asymp 1$, with the implied constant absolute. 
\end{thm}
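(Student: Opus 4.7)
The plan is to specialize the algorithm of Subsection~\ref{gen-str} to the case $n=1$, $k>2$, exploiting the absolute convergence of the degree-one Eisenstein majorant to trivialize the level dependence. By conjugation (as in Subsection~\ref{k-cof}) it suffices to bound $y^k B_{k,G}(z,z)$ for $z \in \mc F^{(1)}_1$ uniformly over the finitely many conjugates $G$ of $\Gamma$. From the Fourier expansion \eqref{bergdef0} specialised to $n=1$, the task reduces to (a) proving $|P_{m,G}(z)| \ll 1$ uniformly for $z \in \mc F^{(1)}_1$, $m \in \Lambda_G$ and $G$, and then (b) estimating the resulting sum over $m$ via the Lipschitz formula.

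For (a), I would take absolute values in the coset sum and use the embedding $G_\infty \backslash G \hookrightarrow (\Gamma_1)_\infty \backslash \Gamma_1$ provided by Lemma~\ref{passage} to dominate
\[
|P_{m,G}(z)| \;\leq\; \sum_{g \in (\Gamma_1)_\infty \backslash \Gamma_1} |c_g z + d_g|^{-k} e^{-2\pi m \im(gz)} \;\leq\; \sum_g |c_g z + d_g|^{-k}.
\]
For $k>2$ this is the absolutely convergent Eisenstein majorant, bounded by an absolute constant on $\mc F^{(1)}_1$. This is precisely where the hypothesis $k>2$ enters, and it is the entire reason the $n=1$ case is short compared to higher degrees. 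For (b), the Lipschitz formula \eqref{sym-sum} with $n=1$ gives
\[
b_{1,k}^{-1} |\mcr S_G|^{-1} \sum_{m \in \Lambda_G} m^{k-1} e^{-2\pi m y} \;=\; \sum_{s \in \mcr S_G} (iy+s)^{-k},
\]
so that $y^k |B_{k,G}(z,z)| \ll y^k \sum_{s \in \omega_G \z} |iy+s|^{-k}$, which is $O(1)$ on $\mc F^{(1)}_1$ upon separating $s=0$ (contributing $1$) from the tail (contributing $O(1)$ since $|s|\geq\omega_G\geq 1$).

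For the matching lower bound, I would follow Section~\ref{lbds-all} verbatim in the $n=1$ case: the Fourier-coefficient inequality at a minimal $T_0 \in \Lambda_G$ (e.g.\ $T_0 = 1/\omega_G$, so that $|\mcr S_G|^{-1} T_0^{-1} = 1$), combined with Theorem~\ref{pt-asymp} (which gives $p_{T_0}(T_0) \gg 1$ for $k$ sufficiently large), produces an absolute implied constant matching the upper bound.

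The main technical nuisance, I expect, will be the uniform control of $y^k \sum_{s} |iy+s|^{-k}$ in the cuspidal strip where $y$ is much larger than $\omega_G$: the crude bound $|P_{m,G}(z)| \ll 1$ by itself loses a factor of $y/\omega_G$ in the tail, and one must sharpen to $|P_{m,G}(z)| \ll e^{-2\pi m y}$ by isolating the dominant $g=1$ contribution and separately controlling the remaining terms via $\sum_{c_g\neq 0}|c_g z + d_g|^{-k} \ll y^{-(k-1)}$. Once this refinement is in place, the rest of the argument is indeed just a few lines.
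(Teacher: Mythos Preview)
Your route is the paper's: conjugate to place $z\in\mc F_1$, dominate $|P_{m,G}(z)|$ by the level-one Eisenstein majorant $\sum_{(c,d)}|cz+d|^{-k}$ (the paper writes ``majorize by the majorant $O(\sum_{c,d}|cz+d|^{-3})$''), and then handle the remaining $m$-sum via the Lipschitz bound \eqref{lip-bd}. The lower bound via Section~\ref{lbds-all} is likewise what the paper intends.

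There is, however, a real slip in your step~(b), and your final paragraph misdiagnoses it. The quantity
\[
y^k\sum_{s\in\omega_G\z}|iy+s|^{-k}\;=\;1+y^k\sum_{j\ne 0}(y^2+j^2\omega_G^2)^{-k/2}
\]
is \emph{not} $O(1)$ on $\mc F_1$: when $y\gg\omega_G$ there are $\asymp y/\omega_G$ indices $j$ with $|j|\omega_G\le y$, and each such term contributes $\asymp 1$ after multiplication by $y^k$, so the tail grows like $y/\omega_G$. The error enters when you apply the triangle inequality termwise to the Lipschitz identity \eqref{sym-sum}; this discards the oscillation in $\sum_{s}(iy+s)^{-k}$. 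The cure is not to sharpen the Poincar\'e bound (your proposed refinement $|P_{m,G}(z)|\ll e^{-2\pi my}$ via isolating $g=1$ would work but is unnecessary), but simply to keep the absolute value \emph{outside} the $s$-sum. After $|P_{m,G}|\ll 1$ one is left with the positive sum $\omega_G^{-1}|b_{1,k}|^{-1}\sum_{m\in\Lambda_G}m^{k-1}e^{-2\pi my}$, which by \eqref{lip-1} equals $\big|\sum_{s\in\omega_G\z}(iy+s)^{-k}\big|=\omega_G^{-k}\big|\sum_{j\in\z}(iy/\omega_G+j)^{-k}\big|$; now \eqref{lip-bd} with $Y_0=y/\omega_G$ gives this $\ll k^{1/2}y^{-k}$, uniformly in $y$ and $\omega_G$. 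This is precisely what the paper means by ``invoke \eqref{lip-bd} to finish'', and it keeps the argument at the advertised few lines.
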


\begin{proof}
The basic pathway of the proof of is essentially contained in the proof of \thmref{mainthm-wt}. We  keep the same setting as that used loc.  cit.
Here there is no sum over $U$, and we simply note that on the piece $\gamma_j \mc F^{(n)}_1$, with $z \in \mc F_1$ and $G=\gamma_j^{-1} \Gamma \gamma_j$, the bound
\begin{align}
    B_{k,G}(z) & \le \omega^{-k} \, b_{1,k}^{-1} \sumn_n n^{k-1 } |P_{n;G}(z)| e(-2 \pi n y /\omega ).
\end{align}
The remaining task is to show that $P_{n;G}(z) \ll 1$ for $z \in \mc F_1$, with an absolute implied constant, which is obvious if we just majorize by the majorant $O(\sum_{c,d} |cz+d|^{-3}) $, which is absolutely bounded since $k \ge 3$ and $y \gg 1$. We now invoke \eqref{lip-bd} to finish the proof.
\end{proof}

\subsection{The case \texorpdfstring{$n=1$}{} and \texorpdfstring{$k=2$}{}}
Let $\Gamma \subset \sltwo$ be a congruence subgroup of level $q$ and let $\omega$ be the width of the cusp $\infty$ of $\Gamma$. The bound $O(\log q)$ for square-free levels $q$ and when $\Gamma=\Gamma_0(q)$ was obtained in \cite{mich-ullmo}, which was improved to $O(1)$ in \cite{jorgenson2004bounding}.
In the large sieve inequality that we want, it turns out that the relevant parameter here is $\omega$ and not $q$. It seems this inequality is not available in the literature, yet this is crucial for us.
With this in mind, we proceed as in \cite[Sec.~5.4]{iwaniec-aut} as follows. We define the normalized Fourier coefficients of $f = \sum_{n \ge 1} a_f(n) e(nz/\omega) \in S_2(\Gamma)$ by
\begin{align}
    a(f,n) = \frac{a_f(n)}{(4 \pi n)^{1/2}}.
\end{align}
For any sequence $a=(a_n)$ of complex numbers, we further define the linear form
\begin{align}
    \mc L(f, a) = \sumn_{n \le K} a_n a(f,n).
\end{align}
We next quote the formula for the Fourier coefficients of (analytically continued) Poincar\'e series of weight $2$ (see \cite{smart}, \cite{parson}). Let us define the Kloosterman sum for $\Gamma$ at the cusp $\infty$ (for $c \ge  1$)by
\begin{align}
    S_\Gamma(m,n;c) = \sumn_{d \in D(c)} e\big( \frac{m \bar{d} + n d}{c \omega} \big)
\end{align}
be the Kloosterman sum at the cusp $\infty$ for $\Gamma$. Here $D(c)=\{ d \in \z \mid (c,d) = (c_\gamma, d_\gamma) \text{ for some } \gamma \in \Gamma; d \in [0, c \omega] \}$.

\begin{prop} \label{k2-prop}
\begin{align} \label{k2-formula}
    p_m(n) = 2\delta(m,n) - \frac{4 \pi}{\omega} (\frac{n}{m})^{1/2} \sum_{c=1}^\infty \frac{S_\Gamma(m,n;c)}{c} J_1 \big( \frac{4 \pi \sqrt{mn}}{c \omega} \big),
\end{align}
where $J_1(x)$ denotes the Bessel function of the first kind.
\end{prop}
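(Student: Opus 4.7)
Since the weight $k=2$ Poincar\'e series does not converge absolutely, the plan is to use Hecke's trick. Define the regularized series
\[
P_m(z,s) \;=\; \sumn_{\gamma \in \Gamma_\infty \backslash \Gamma} \frac{(\im \gamma z)^s}{(c_\gamma z + d_\gamma)^{2}}\, e(m \gamma z / \omega),
\]
which converges absolutely for $\re(s) > 0$, and show it admits a holomorphic continuation to $s = 0$. The holomorphic weight-$2$ Poincar\'e series $P_m$ (strictly speaking, its holomorphic projection) is then the value at $s=0$. This is the classical construction of \cite{smart, parson} in level one, and goes through for any congruence subgroup with only notational changes, the cusp width $\omega$ at $\infty$ replacing $1$ throughout.

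Next, I would compute the $n$th Fourier coefficient term by term in the region $\re(s)>0$. Splitting the sum over $\gamma \in \Gamma_\infty\backslash\Gamma$ according to whether $c_\gamma=0$ or $c_\gamma\ne 0$: the $c_\gamma=0$ cosets contribute $e(mz/\omega)$ with a multiplicity of $2$ coming from the two elements $\pm I_2 \in \Gamma_\infty$ (both of which give $(c_\gamma z+d_\gamma)^{-2}=1$), and extracting the $n$th Fourier coefficient produces the diagonal term $2\delta(m,n)$. For the cosets with $c_\gamma\ne 0$, parametrize representatives (up to sign) by $c>0$ together with $d$ modulo $c\omega$. Collapsing the integral $\frac{1}{\omega}\int_0^\omega \cdots e(-nx/\omega)\,dx$ to an integral over all of $\R$ (by absorbing the $d$-residues) factors the contribution as a Kloosterman sum $S_\Gamma(m,n;c)$ times a Lipschitz--Hankel integral; standard computation, followed by analytic continuation to $s=0$, evaluates the latter to $c^{-1}(n/m)^{1/2}\,J_1\!\big(4\pi\sqrt{mn}/(c\omega)\big)$ up to the prefactor $-4\pi/\omega$, exactly as stated.

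The main obstacle is to legitimize the $s\to 0^+$ limit, and in particular to interchange it with the $c$-summation, since the resulting $c$-series is only conditionally convergent. For this I would invoke \hypref{kloo-hypo}, which gives the Weil-type bound $S_\Gamma(m,n;c)\ll c^{1/2+\epsilon}$ uniformly in the parameters; combined with the standard estimate $J_1(x)\asymp \min(x,x^{-1/2})$ for the Bessel kernel, the tail $\sum_{c>C} S_\Gamma(m,n;c)c^{-1}J_1(4\pi\sqrt{mn}/(c\omega))$ is absolutely bounded, which justifies the termwise passage to $s=0$. A secondary bookkeeping point is the consistent appearance of the cusp width $\omega$: each factor of $\omega$ reflects the normalization $q=e(z/\omega)$ of the Fourier expansion, and the argument $4\pi\sqrt{mn}/(c\omega)$ of $J_1$ arises as the stationary-phase parameter of the $x$-integral, rescaled by $\omega$. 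Modulo these points the argument mirrors the level-one computation in \cite[\S3.3--3.4]{iwaniec-aut} essentially verbatim, with $(\omega,S_\Gamma)$ replacing the corresponding level-$N$ data, thereby yielding \eqref{k2-formula}.
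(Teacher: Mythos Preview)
The paper does not prove this proposition at all; it simply \emph{quotes} the formula from \cite{smart,parson} (see the sentence immediately preceding the statement). Your sketch via Hecke regularisation, the $c_\gamma=0$/$c_\gamma\ne 0$ split, and the Lipschitz--Hankel integral is precisely the route those references take, so at the level of method you are aligned with what the paper is citing.

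There is, however, a structural issue. You invoke \hypref{kloo-hypo} to justify the passage $s\to 0^+$. In the paper's logic the proposition is stated \emph{unconditionally}, and \hypref{kloo-hypo} is introduced only afterwards as an additional assumption needed specifically for part~(B) of \propref{duke-gam}. Making the Fourier-expansion formula itself conditional on the hypothesis would undercut this organisation. In fact the hypothesis is unnecessary here: for the tail $c\to\infty$ one has $J_1(4\pi\sqrt{mn}/(c\omega))\asymp c^{-1}$, so the $c$-sum behaves like $\sum_c S_\Gamma(m,n;c)c^{-2}$, and Petersson's unconditional bound $S_\Gamma(m,n;c)\ll_\epsilon c^{1/2+\epsilon}$ (which the paper explicitly records just below the hypothesis) already gives absolute convergence. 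The references \cite{smart,parson} proceed without any Weil-type input.

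A smaller point: your explanation of the coefficient $2$ in $2\delta(m,n)$ is garbled. With the paper's convention $\Gamma_\infty\ni -I_2$, the two elements $\pm I_2$ lie in the \emph{same} coset of $\Gamma_\infty\backslash\Gamma$, so they cannot both contribute. The factor $2$ is a normalisation artefact of the convention in \cite{smart,parson}, where the stabiliser is taken to be the unipotent subgroup (without $-I_2$), so that $I_2$ and $-I_2$ are genuinely distinct cosets each contributing $e(mz/\omega)$.
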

We have the Petersson trace formula in this setting:
\begin{equation} \label{pet-tr}
    \sumn_{f \in \mcr B_2(\Gamma)} \overline{a(f,m)} a(f,n) = \frac{2}{\omega^2} \big( \frac{m}{n} \big)^{1/2} \, p_m(n).
\end{equation}

\begin{hyp} \label{kloo-hypo}
We make the following assumption about the general Kloosterman sum:
\begin{align}
    S_\Gamma(m,n;c) \ll (c\omega)^{1/2+\epsilon} (m,n,c\omega)^{1/2}.
    \end{align}
    It is known from Goldfeld-Sarnak (\cite{go-sa}) that $\displaystyle \limsup_{c \to \infty} |S_\Gamma(m,n;c)|c^{-1} \le 1/2 $, and from Petersson (\cite{petersson}) that $S_\Gamma(m,n;c) \ll_\epsilon c^{1/2 +\epsilon}$ with the implied constant not depending on $n$. With these evidences, it seems likely that the above hypothesis may be proved from known techniques.
\end{hyp}
We want to offer two proofs of the desired large sieve inequality -- one via the argument of Duke \cite{duke} and the other via that of Duke-Friedlander-Iwaniec \cite{dfi}. Of these, the former has two advantages over the latter: most importantly, it does not rely on \hypref{kloo-hypo} on Kloosterman sums and secondly it gives a slightly better bound. We nevertheless include both approaches: this not only serves as a cross check for the result, which is a bit different in terms of a scaling factor than that for the Hecke congruence subgroups $\Gamma_0(q)$, but also perhaps provides a different perspective applicable to other situations.

\begin{prop} \label{duke-gam}
   Let $\Gamma \subset \Gamma_0(q)$ be any congruence subgroup of level $q$, and $\omega=\omega_\Gamma$ be the width of the cusp at $\infty$.
   
   (A) Then with the above notation, one has the bound 
    \begin{equation} \label{ls-gamma}
    \sumn_{f \in \mcr B_2(\Gamma)} |\mc L(f, a)|^2 \ll \frac{1}{\omega^2} \big( \log K + O(\frac{K}{q \omega}) \big) \norm{a}^2 .
    \end{equation}

(B)  In addition, assume \hypref{kloo-hypo}. Then,
\begin{equation} \label{ls-gamma2}
    \sumn_{f \in \mcr B_2(\Gamma)} |\mc L(f, a)|^2 \ll \frac{1}{\omega^{2-\epsilon} } \big( 1+ O(\frac{K}{q \omega} ) \big) \norm{a}^2 .
    \end{equation}
\end{prop}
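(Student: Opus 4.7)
\textbf{Proof plan for Proposition~\ref{duke-gam}.}

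For part (A), following Duke's method \cite{duke}, the plan is to exhibit $\mc L(f,a)$ as an inner product $\omega^{-2}\lan f, \tilde g_a\ran_\Gamma$ with a pseudo-cusp form $\tilde g_a$, and then invoke Bessel's inequality. Concretely, with a smooth cutoff $\phi$ equal to $1$ on $[1,\infty)$ and vanishing on $[0,1/2]$, I would set
\begin{align}
g_a(z) = \phi(Ky/\omega)\sumn_{n \le K} \bar a_n (4\pi n)^{1/2} e(nz/\omega), \q \tilde g_a := \sumn_{\gamma \in \Gamma_\infty\backslash \Gamma} g_a \mid_2 \gamma .
\end{align}
Unfolding the Petersson pairing gives $\lan f, \tilde g_a\ran_\Gamma = \int_0^\omega \int_0^\infty f \overline{g_a}\, dxdy = \omega^2 \mc L(f,a) + O(1)$ (the error from the smooth cutoff), so Bessel yields
$\sumn_f \abs{\mc L(f,a)}^2 \ll \omega^{-4} \norm{\tilde g_a}_\Gamma^2$.

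A second unfolding expresses $\norm{\tilde g_a}_\Gamma^2 = \int_{\Gamma_\infty \backslash \h} g_a \overline{\tilde g_a}\, dxdy$, which I would split into the identity-coset contribution (diagonal) and the remaining $\gamma \neq \mrm{id}$ contribution. Parseval in $x$ handles the diagonal, producing $\omega^2 \log K \cdot \norm{a}^2$ after evaluating the resulting harmonic $y$-integral. For the off-diagonal, I would use that $g_a$ is supported on the strip $R = \{0 \le x < \omega,\ y \ge \omega/(2K)\}$, so only $\gamma$ with $\gamma R \cap R \neq \emptyset$ survive; the number of such $\gamma$ in $\Gamma_\infty\backslash \Gamma$ is $\ll 1 + \mrm{area}(R)/\mrm{area}(\Gamma_0(q)\backslash \h) \ll 1 + K/q$, which, after the $\omega^{-4}$ rescaling and routine estimates on $|cz+d|$ that exploit $c \equiv 0 \bmod q$, delivers the $K/(q\omega)$ error term in \eqref{ls-gamma}.

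For part (B), the plan is the standard Petersson route. Expand bilinearly via \eqref{pet-tr} to get
\begin{align}
\sumn_{f \in \mcr B_2(\Gamma)} \abs{\mc L(f,a)}^2 = \frac{2}{\omega^2}\sumn_{m,n \le K} \bar a_m a_n \, (m/n)^{1/2}\, p_m(n),
\end{align}
and substitute the formula \eqref{k2-formula} from \propref{k2-prop}. The diagonal term $2\delta(m,n)$ contributes exactly $\tfrac{4}{\omega^2}\norm{a}^2$. For the off-diagonal sum in $c$, I would invoke \hypref{kloo-hypo} to bound $S_\Gamma(m,n;c) \ll (c\omega)^{1/2+\epsilon}(m,n,c\omega)^{1/2}$ and use the standard estimates $J_1(x) \ll \min(x, x^{-1/2})$. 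Splitting the $c$-range at $c \asymp \sqrt{mn}/\omega$, and resumming in $m,n \le K$ by Cauchy--Schwarz grouped according to the gcd with $c\omega$ (to preserve the $\omega^\epsilon$ savings), the off-diagonal contribution becomes $\ll K\norm{a}^2/(q\omega^{3-\epsilon})$, matching \eqref{ls-gamma2}.

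The main obstacle in (A) is the bookkeeping of the geometric covering: the strip $R$ is non-compact and meets the cusps of $\Gamma \backslash \h$, so the covering count must be done carefully using that $\Gamma \subset \Gamma_0(q)$ forces $c \equiv 0 \bmod q$ in any non-identity coset, which bounds how close to $\infty$ a $\Gamma$-translate of $R$ can reach; crucially, one must avoid any appeal to $\sup(S_2(\Gamma))$, the very quantity being estimated. The principal difficulty in (B) is the uniform handling of the Kloosterman sums with the gcd factor in \hypref{kloo-hypo}: to keep only $\omega^\epsilon$ in the final bound one must group $m,n$ according to their common divisor with $c\omega$ prior to Cauchy--Schwarz, rather than trivially bounding the gcd by its maximum.
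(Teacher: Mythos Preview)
Your approach to (A) is genuinely different from the paper's. The paper invokes large--sieve duality at the outset: it suffices to prove the \emph{individual} bound $\sum_{n\le K}|a_f(n)|^2 \ll K\omega^{-2}(1+K/(q\omega))$ for each normalised $f$, and this is obtained by integrating $|f|^2$ over the strip $P_Y=\{0<x\le\omega,\ y>Y\}$ and controlling $\int_{P_Y}|f|^2\,dxdy$ via the covering count $\#\{\gamma\in\Gamma:\gamma z\in P_Y\}\ll 1+(qY)^{-1}$, with $Y=\omega/K$. There is no test function, no Bessel inequality, and no unfolding of a Poincar\'e--type object; the $\log K$ then appears only at the end, from partial summation when passing from $a_f(n)$ to $a(f,n)=a_f(n)(4\pi n)^{-1/2}$. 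Your Bessel route can be made to work, but it is more delicate for $k=2$ (absolute convergence of $\tilde g_a$, its $L^2$--finiteness near the other cusps, and the smoothing error in $\langle f,\tilde g_a\rangle=\omega^2\mc L(f,a)+\cdots$ all need care), and your assertion that the diagonal produces $\omega^2\log K\,\|a\|^2$ via a ``harmonic $y$--integral'' does not match the computation: with the weight--$2$ measure $dxdy$ the diagonal is $\asymp\omega^2\|a\|^2$, so any $\log K$ would have to enter elsewhere.

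For (B) your outline misses two ingredients that the paper actually uses. First, the paper does \emph{not} rely on \hypref{kloo-hypo} alone: it combines the trivial completion bound $|\sum_{m,n}\bar a_m a_n S_\Gamma(m,n;c)|\le(c\omega+K)\|a\|^2$ for small $c$ with the Hypothesis bound for large $c$, splitting at $c=K$ (not at $c\asymp\sqrt{mn}/\omega$). Using only the Weil--type bound, as you propose, does not control the small--$c$ range uniformly. Second, the paper's direct estimate only covers the range $K\le q\omega$; for $K>q\omega$ it performs a level--embedding into $\Gamma'=\Gamma\cap\Gamma^0(p)$ with a prime $p$ chosen so that $pq\omega>K$, and bounds $\sum_{f\in\mcr B_2(\Gamma)}$ by $(p+1)\sum_{f\in\mcr B_2(\Gamma')}$. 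You assert a $q$--saving in the off--diagonal but give no mechanism for it: neither \hypref{kloo-hypo} nor the Bessel estimate contains $q$, so without either exploiting the support restriction on $c$ coming from $\Gamma\subset\Gamma_0(q)$ or the embedding trick, your claimed bound $K\|a\|^2/(q\omega^{3-\epsilon})$ is unjustified.
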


\begin{proof}[Proof of (A)]
By the duality of the large sieve (see e.g. \cite{duke}), it is enough to prove for any $f \in \mcr B_2(\Gamma)$ that
\begin{align} \label{no-norm}
    \sumn_{n \le K} |a_f(n)|^2 \ll \frac{K}{\omega^2}  \big( 1 + O(\frac{K}{q \omega}) \big).
\end{align}
Then \thmref{duke-gam}, which is for the normalised Fourier coefficients $a(f,n) \asymp a_f(n) n^{-1/2}$, follows from \eqref{no-norm} by partial summation.

We will only be brief and mention the important points. We start with
\begin{align}
    \omega \sumn_{n \le K} |a_f(n)|^2 \exp(- 4 \pi n y/\omega) = \int_{0}^\omega |f(x+iy)|^2 dx.
\end{align}
Keeping in mind that $k=2$ here, we arrive at the inequality
\begin{align}
    \omega Y \int_{1}^\infty \exp(- 4 \pi KY y/\omega)  dy \cdot \sumn_{n \le K} |a_f(n)|^2 \le \int_Y^{\infty} \int_{0}^\omega |f(z)|^2  dxdy.
\end{align}
It we let $P_{Y}:= \{ z =x+iy \in \h \mid 0<x\le \omega,\,  y>Y\}$, then as in \cite{duke}, we can write that
\begin{align}
    \int_{P_Y} |f(z)|^2 dxdy \le \max_{z \in P_Y} \#\{ \gamma \in \Gamma \mid \gamma(z) \in P_Y\} \pet{f}
\end{align}
because $P_Y$ can be covered by finitely many translates of the fundamental domain $\mc F_\Gamma$, which we assume to be without loss inside the strip $\{ 0<\re(z) \le \omega \}$ (recall that one has $\Gamma_\infty=< \psmb 1 & \omega \z \\ 0 & 1 \psme > $). The count of $\gamma$ goes through without any change as only imaginary parts of $z, \gamma(z)$ play a role. We get
\begin{align}
    \max_{z \in P_Y} \#\{ \gamma \in \Gamma \mid \gamma(z) \in P_Y\} \ll 1 + 3 q^{-1} Y^{-1}.
\end{align}
At this point, choosing $Y:= \omega K^{-1}$ finishes the proof of (A).
\end{proof}

\begin{proof}[Proof of (B)]
    We will closely follow the argument given in \cite[Sec.~5.4]{iwaniec-aut}, and mention only the relevant points. Opening the square on the LHS of \eqref{ls-gamma2} and using \eqref{k2-formula}, \eqref{pet-tr}, we get
\begin{align} \label{lfa1}
    \sum_{f \in \mcr B_2(\Gamma)} |\mc L(f, a)|^2 = \frac{2}{\omega^2} \bigg( 2 \norm{a}^2 + \sum_{c \ge 1} \frac{1}{c \omega} {\underset{m,n \le K}{\sum \sum} } \overline{a_m} a_n \, S_\Gamma(m,n;c) \,  J_1 \big( \frac{4 \pi \sqrt{mn}}{c \omega}   \big) \bigg )
\end{align}
Analogously as in \cite[Sec.~5.4]{iwaniec-aut} we arrive at
\begin{align} \label{smn1}
   | {\underset{m,n \le K}{\sum \sum} } \overline{a_m} a_n \, S_\Gamma(m,n;c)| \le c \omega {\underset{m \equiv n \bmod{c \omega}}{\sum \sum} } \overline{a_m} a_n \le (c \omega +K) \norm{a}^2.
\end{align}
Here we have used the fact that the sets $\{d \in D(c,\Gamma) \mid d \bmod{c \omega}\}$ and $\{a \in D(c,\Gamma) \mid a \bmod{c \omega}\}$ are in bijection via $\gamma \mapsto \gamma^{-1}$. 
Assuming the bound from \hypref{kloo-hypo} and arguing as in [DFI], we also get the bound (this bound is relevant only for $k=2$)
\begin{align} \label{smn2}
    | {\underset{m,n \le K}{\sum \sum} } \overline{a_m} a_n \, S_\Gamma(m,n;c)| \le (c\omega)^{1/2+\epsilon} K \norm{a}^2.
\end{align}
When $K \le \omega$, we use the Taylor series for the Bessel function and \eqref{smn1} to get that the summand in \eqref{lfa1} as a function of $c$ is
 $ \displaystyle  O \big(  \frac{K}{c^2\omega^2} (c \omega + K) \norm{a}^2$ \big),
which we use to bound the contribution of $c \le K$.
Whereas if we use \eqref{smn2}, the same quantity as above is
$ \displaystyle  O \big(  \frac{K}{c^2\omega^2} (c\omega)^{1/2+\epsilon} K \norm{a}^2 \big)  $,
which we use to bound the contribution of $c > K$.
The rest of the argument proceeds as in \cite[Sec.~5.4]{iwaniec-aut} and as long as $q \omega \ge K$ and the bound reads $\displaystyle O(\omega^{-2 +\epsilon})$. 

When $q \omega  <K$, one resorts to a level embedding technique, which is a bit subtle here. For a prime $p$, we take $\Gamma':= \Gamma^0(p) \cap \Gamma$, and note that $[\Gamma:\Gamma'] \le p+1$ from the natural inclusion $\Gamma'\backslash \Gamma \hookrightarrow \Gamma^0(p) \backslash \Gamma_1$. We further assume $(p,\omega)=1$, so that the width of $\Gamma'$ at $\infty$ is $p\omega$. We choose $p$ such that $K < p q \omega \ll K$ and one note that 
\begin{align}
\displaystyle \sum_{f \in \mcr B_2(\Gamma)} |\mc L(f, a)|^2 \le (p+1) \sum_{f \in \mcr B_2(\Gamma')} |\mc L(f, a)|^2 \ll \frac{1}{p^2\omega^{2-\epsilon} } (p+1) \ll \frac{1}{\omega^{2-\epsilon} } \big( 1 + O(\frac{K}{q \omega}) \big)  \norm{a}^2 ,
\end{align}
as desired. This finishes the proof.
\end{proof}

\begin{thm}
Let $\Gamma \subset \Gamma_0(q)$ be any congruence subgroup of level $q$. Then   one has the bound $\sup(S_2(\Gamma)) \ll \log q$.
\end{thm}

\begin{proof}
    The proof is immediate from the large sieve inequality \propref{duke-gam}~(A). In view of the Algorithm~\ref{algo-gam-lat}, is enough to prove that $\mbb B_G(z) \ll 1$ for all conjugates $G $ of $\Gamma$ and for all $z \in \mc F_1$.
    
    First of all, using the trivial bound for the Kloosterman sums $S_\Gamma(m,n;c)$ one can easily prove a bound 
    $\displaystyle a(f,n) \ll n^{D}$ for some absolute $D>0$. We can then truncate the Fourier expansion at $K= \delta q \log q$ for appropriate $\delta$ depending only on $D$, for the normalized Fourier coefficients $a(f,n)$ for any $f \in \mcr B_2(\Gamma)$, via the Petersson formula. E.g., see \cite{mich-ullmo} where arithmetic new-old basis decomposition and Deligne's bound were used. We don't need any of them. The proof is finished by noting that
    \begin{align}
     \mbb B_G(z) \le  \sumn_{f \in \mcr B_2(\Gamma)} y^2\left| \left(\sumn_{n \le K} + \sumn_{n > K} \right) a_f(n) e(nz/\omega) \right|^2.
    \end{align}
We define the sequence $a=(a_n)$ by $a_n:= \sqrt{n}  e(nz/\omega)$ and put $\omega=\omega_G$. Then we can write,
\begin{align}
    \mbb B_G(z) \le \sumn_{f \in \mcr B_2(\Gamma)} y^2|\mc L(f, a)|^2 + \sumn_{f \in \mcr B_2(\Gamma)} y^2 |\sumn_{n > K} a_f(n) e(nz/\omega) |^2
\end{align}
Let us call the first and second sums above as I and II respectively. Then 
\begin{align} \label{I}
    \mrm{I} \ll \frac{y^2}{\omega^2}(1+ \frac{K}{q \omega}) \sum_{n \le K} n \exp(- 4 \pi n y/\omega) \ll (1+ \frac{K}{q \omega}) \ll \log q.
\end{align}
A bound for the $n$-sum is precisely worked out on \cite[p.~662]{mich-ullmo} with $y$ replaced with $y / \omega$, $N$ with $\omega$. This works mutatis mutandis since $\omega|N$. We require $\delta>1/4 \pi$. We do not repeat the details.

For the sum II, one actually has an exponential decay as it can be compared with the incomplete Gamma function. Note that for us $y \gg 1$. 
\begin{align}
    y  \sumn_{n > K}  a_f(n) e(nz/\omega)  \le y \sumn_{n > K}  a_f(n) \exp(- 2 \pi  ny/\omega) 
     \ll y \sumn_{n > K}  n^{2D} \exp(- 2 \pi  n y/\omega) .
\end{align}
Recall (see e.g. \cite{pinnelis}) that the incomplete Gamma function, for $a>1$, defined by $\displaystyle \Gamma(a,x):= \int_x^\infty t^{a-1} e^{-x} dx$ satisfies the bound
\begin{align}
    \Gamma(a,x) \le x^{a-1}e^{-x} \left(1- \frac{a-1}{x} \right)^{-1}, \q x>a-1.
\end{align}
Recall that $K= \delta q \log q$ for suitable $\delta>0$. We thus have 
\begin{align}
    y  \sumn_{n > K}  a_f(n) e(nz/\omega) & \ll y \cdot \left(\frac{\omega}{y} \right)^{D+1} \cdot \Gamma \left(D+1, \frac{K}{\omega} \right) \\
    &\ll q^{D+1} K^D \exp\left(- \frac{K}{\omega} \right) \ll q^{3D} \exp \left(- \frac{K}{q} \right) \ll q^{3D} \exp (- \delta \log q) \ll 1 \label{II}
\end{align}
for $\delta \gg D$. Since $D$ was an absolute constant, this finishes the proof if we combine the bounds in \eqref{I} and \eqref{II}.
\end{proof}

\relax

\printbibliography

\end{document}